\newtheorem{thm}{Theorem}[section]
\newtheorem{ass}[thm]{Assumption}
\newtheorem{coro}[thm]{Corollary}
\newtheorem{lem}[thm]{Lemma}
\newtheorem{prop}[thm]{Proposition}
\theoremstyle{definition}
\theoremstyle{remark}
\newtheorem{remk}[thm]{Remark}
\newcommand{\id}{{\rm id}}
\newcommand{\dis}{\displaystyle}
\newcommand{\supp}{{\rm supp}}
\newcommand{\oL}{\overline{L}}
\newcommand{\Hcal}{ {\mathcal H}}
\newcommand{\Dcal}{ {\mathcal D}}
\newcommand{\Ecal}{ {\mathcal E}}
\newcommand{\Acal}{ {\mathcal A}}
\newcommand{\Ccal}{ {\mathcal C}}
\newcommand{\Bcal}{ {\mathcal B}}
\newcommand{\Hbb}{ {\mathbb H}}
\newcommand{\Rbb}{ {\mathbb R}}
\newcommand{\Ebb}{ {\mathbb E}}
\newcommand{\tHc}{\widetilde{\Hcal}}
\newcommand{\tOmega}{\widetilde{\Omega}}
\newcommand{\eps}{\varepsilon}
\newcommand{\spec}{{\rm spec}\,}
\newcommand{\dom}{{\rm dom}\,}
\newcommand{\alp}{\alpha}
\newcommand{\R}{{\mathbf R}}
\newcommand{\Z}{{\mathbf Z}}
\newcommand{\N}{{\mathbf N}}
\newcommand{\py}{\partial_y}
\newcommand{\px}{\partial_x}
\newcommand{\und}{\frac{1}{2}}
\newcommand{\td}{\frac{3}{2}}
\newcommand{\un}{\mathbbm{1}}    
\newcommand{\Cinf}{\mathcal{C}^\infty}
\newcommand{\Cinfz}{\mathcal{C}^\infty_0}
\newcommand{\tendsto}[2]{\xrightarrow[{#1} \rightarrow {#2}]{}}%
\renewcommand{\leq}{\leqslant}
\renewcommand{\geq}{\geqslant}
\newcommand{\display}{\displaystyle}
\definecolor{blue(ncs)}{rgb}{0.0, 0.53, 0.74}
\newcommand{\op}{\mathopen]}
\newcommand{\cl}{\mathclose[}
\newcommand{\pr}{\partial_r}
\newcommand{\ptheta}{\partial_\theta}
\title{Generic simplicity of ellipses}
\author{Luc Hillairet}
\address{Institut Denis Poisson, UMR 7013, CNRS-UO-UT, 
Université d'Orl\'eans, B\^atiment de Math\'ematiques, rue de Chartres, 
F-45100 Orl\'eans}
\email{\href{mailto:luc.hillairet@univ-orleans.fr}{luc.hillairet@univ-orleans.fr}}
\thanks{The work of L.H. is partially supported by the ANR grant ADYCT ANR-20-CE40-0017.}
\author{Chris Judge}
\address{Indiana University, Department of Mathematics, Bloomington, IN 47405} 
\email{\href{mailto:cjudge@indiana.edu}{cjudge@indiana.edu}}
\thanks{The work of C.J.\ is partially supported by a Simons 
Foundation MP-SCMPS-00006686. }
\begin{document}
\maketitle

\textit{Steve Zelditch's impressive work in a wide variety of subjects has been
very influential on ours. We cannot acknowledge all of the discussions with
him that eventually led to something, whether a result, a proof, an
example... Our use of his note \cite{Zelditch-Note} 
in this paper is representative of his influence on us. We hope that he would have liked the result and this text is dedicated to his memory.}

\section{Introduction}

In this article, we adapt the methods of \cite{HJCMP} to prove that 
the Laplace spectrum of the generic ellipse is simple, both with Dirichlet or Neumann boundary condition. Multiplicities are unchanged if one applies an isometry or a homothety, 
and hence it suffices to consider ellipses of the form
$$
\Ecal_{h}\,
:=\,
\{ (x,y)\, :\, (h\, x)^2 + y^2  < 1 \}.
$$
where $h \in \op 0, 1]$. 
\begin{thm}
\label{thm:main}
There exists a countable subset $\Ccal \subset \op 0,1]$ 
so that if $h \notin \Ccal$, then each eigenspace of the 
Dirichlet (resp.  Neumann) Laplace operator of the ellipse $\Ecal_h$ is 
one dimensional. 
\end{thm}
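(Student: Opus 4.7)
I would follow the template of \cite{HJCMP}: first establish real analyticity of the eigenvalue branches in $h$, then show that no two distinct branches coincide identically. The ellipse $\Ecal_h$ carries a symmetry group $G \cong \Z/2 \times \Z/2$ generated by reflections across the two coordinate axes, and this commutes with $\Delta$. Accordingly, $L^2(\Ecal_h)$ splits into four isotypic components $L^2_\chi$ indexed by the characters $\chi$ of $G$, and each eigenfunction lies in one of them; write $\{\lambda_n^\chi(h)\}_{n\geq 1}$ for the eigenvalues of the Dirichlet (resp.~Neumann) Laplacian on $L^2_\chi$. The change of variables $(x,y) \mapsto (x/h, y)$ conjugates $-\Delta$ on $\Ecal_h$ to a real-analytic family $L_h$ of elliptic operators on the fixed unit disk $\Ecal_1$, and Kato-Rellich perturbation theory then furnishes piecewise real-analytic branches $h \mapsto \lambda_n^\chi(h)$ on $\op 0,1]$.

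By the identity principle for real-analytic functions, it suffices to prove the following rigidity statement: whenever $(m,\chi) \neq (n,\chi')$, the branches $\lambda_m^\chi$ and $\lambda_n^{\chi'}$ do not coincide on any open subinterval of $\op 0,1]$. Granting this, the coincidence set of each such pair is discrete in $\op 0,1]$, and the countable union over all pairs of labels yields the exceptional set $\Ccal$; combined with the isotypic splitting this gives simplicity of the full Laplacian outside $\Ccal$.

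To prove the rigidity statement, I would argue by contradiction. If $\lambda_m^\chi \equiv \lambda_n^{\chi'}$ on a subinterval, Kato's theory supplies real-analytic $L^2$-orthonormal eigenfunction branches $u$ and $v$ on that interval. Hadamard's variational formula in its matrix form then forces, in the Dirichlet case, the boundary identities
\[
\int_{\partial\Ecal_h} (\partial_\nu u)(\partial_\nu v)\,(V_h\cdot \nu)\, ds \; =\; 0, \qquad \int_{\partial\Ecal_h}\bigl[(\partial_\nu u)^2 - (\partial_\nu v)^2\bigr] (V_h\cdot \nu)\, ds \; =\; 0,
\]
together with their higher-order analogues, where $V_h$ is the boundary velocity of the deformation. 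A direct computation shows that $V_h \cdot \nu$ is $G$-invariant, so when $\chi \neq \chi'$ the first identity is automatic by symmetry and one must derive a contradiction from the diagonal identity combined with symmetry considerations and higher-order variations. When $\chi = \chi'$, one would combine these identities with the boundary trace identities of Zelditch's note \cite{Zelditch-Note} for eigenfunctions of elliptical domains to force $u$ and $v$ to be proportional, contradicting their orthogonality.

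\textbf{Main obstacle.} The decisive step is closing the rigidity argument. In the polygonal setting of \cite{HJCMP} the argument is localized at a moving vertex, where the explicit singular expansion of eigenfunctions provides a great deal of pointwise structure; here the entire smooth boundary of $\Ecal_h$ moves with $h$ and there is no such local structure to exploit. The argument must instead rest on global rigidity properties specific to elliptical eigenfunctions, and this is presumably precisely where the identity supplied by Zelditch's note closes the loop.
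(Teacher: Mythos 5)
Your opening moves match the paper's: set up the scaling $(x,y)\mapsto(x/h,y)$, invoke Kato--Rellich analyticity, decompose by the reflection symmetry, and reduce the theorem to showing that no two distinct analytic eigenvalue branches coincide identically. Up to that point you and the paper are on the same track (the paper uses only the reflection across the $x$-axis rather than the full $\Z/2\times\Z/2$ group, but this is cosmetic). However, the mechanism you propose for ruling out coincident branches is not the one the paper uses, and as you yourself acknowledge in your final paragraph, you have not actually closed it. The Hadamard-variation identity for a one-parameter family is a single scalar constraint; it does not by itself force orthogonal eigenfunctions in a two-dimensional eigenspace to be proportional, and there is no obvious ``boundary trace identity for elliptical domains'' to supply the missing rigidity. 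Your citation of \cite{Zelditch-Note} in this role is a misreading: that note concerns semiclassical (defect) measures, and the paper invokes it for an entirely different technical point, namely to show that certain off-diagonal matrix elements of the quadratic form $\|h\,v'\|^2$ in an eigenfunction basis of a 1D semiclassical Schr\"odinger operator tend to zero.

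What the paper actually does to close the argument is evaluate the two ends of the parameter interval. At $h=1$ the domain is the unit disk, and Bourget's hypothesis (Siegel's theorem on common zeros of Bessel functions) shows each eigenspace is at most two-dimensional and, when two-dimensional, splits as one ``odd'' and one ``even'' eigenfunction with respect to $y\mapsto -y$. So if two analytic branches were to coincide for all $h$, the corresponding eigenfunctions at $h=1$ would necessarily have opposite parity, and by analytic continuation opposite parity for all $h$. The paper then sends $h\to 0$: after the change of variables the eigenvalue problem becomes the quadratic form $q_h(u)=h^2\|\partial_x u\|^2+\|\partial_y u\|^2$, and a careful semiclassical non-concentration argument (Sections~2--5, which are the technical heart of the paper and are new relative to \cite{HJCMP} because $L(x)=\sqrt{1-x^2}$ has a square-root vanishing at $x=\pm1$) shows that ``odd'' branches limit to $(k\pi/L(0))^2$ while ``even'' branches limit to $((k-\tfrac12)\pi/L(0))^2$. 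These two sets of limits are disjoint, so coincident branches of opposite parity are impossible. This is a genuinely different strategy from the Hadamard-variation rigidity you sketched, and it is the one that actually works; your plan stalls precisely at the step you flag as the ``main obstacle.''
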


The spectral geometry of ellipses has a long history, starting with the work of \'Emile Mathieu 
(see \cite{Mth}) and it has been recently the subject of a number of striking results, both on its dynamical and spectral 
sides. In \cite{HZb}, H. Hezari and S. Zelditch, use (among many other ingredients) dynamical results initiated in \cite{AdSK} 
to prove that ellipses of small eccentricty are spectrally determined. In a previous paper (\cite{HZa}) the same authors conjecture that 
eigenvalues of a generic ellipse have multiplicity at most $2$. The latter result is a consequence of analyticity of the spectrum and the 
well-known fact that eigenvalues of the disk have multiplicity at most $2$. In this paper, 
we refine this result and prove that the spectrum of a generic ellipse is actually simple. 
It turns out that the method of proof also gives that there do exist ellipses (besides disks) that have at least one 
multiple eigenvalue. 

In \cite{HJCMP}, we designed a general approach to prove generic simplicity in settings that depend only on 
a finite number of geometric parameters. This should be constrasted with the well-known results of Albert 
\cite{Albert} and Uhlenbeck \cite{Uhlenbeck} that require infinitely many parameters. This approach relies on two main ingredients: 
analytic perturbation theory and (semiclassical) concentration of eigenfunctions. In \cite{HJCMP}, we used this approach 
for triangles and here we use it for ellipses. Although the general philosophy is the same, 
the exact method described in \cite{HJCMP} does not 
apply directly to ellipses. For the ellipse $\Ecal_h$ above,
the natural profile curve is $y=L(x)= \sqrt{1- (hx)^2}$ and the derivative 
$L'$ does not exist at the endpoints $x=\pm 1/h$. The method of \cite{HJCMP} 
requires that $L'$ be finite at each endpoint. To apply our approach in the case of ellipses, we 
need an extra new ingredient that is a careful control of the mass of eigenfunctions near $x = \pm 1/h$.
We also use the structure of the spectrum of the unit disk while the method of \cite{HJCMP} does not require a knowledge of the spectrum 
of a particular member of the family of domains.

We outline the contents of the paper. 
{We treat the case of the Dirichlet Laplacian in the main part of the text and provide 
details of the Neumann case in Appendix \ref{sec:neu}.}
In \S \ref{sec:line}, we state and prove a non-concentration estimate for 
a 1-dimensional semi-classical family $P_h u= -h^2\,u'' + Vu$ of 
Schr\"odinger operators with an unbounded,
single-well potential $V$. In \S \ref{sec:interval}, 
we apply this non-concentration
estimate to prove an estimate for a family of semi-classical
Sturm-Liouville operators $A_h u= -h^2 L^{-1}( L u')' + L^{-2} u$ on  a finite interval where $L$ is a non-negative function that vanishes on the endpoints
and has exactly one critical point. 
These operators arise from na\"ively applying separation of variables 
to a domain $\Omega$ 
that lies between the $x$-axis and the graph of the function $L$.
The estimate is a non-concentration estimate for $H^1$ functions 
supported away from the endpoints.

In \S \ref{sec:sum}, we make sense of the  
formal sum $\Acal_h = \sum_k -h^2 L^{-1}( L u')' + (\pi k/L)^2 u$,
and we obtain a non-concentration estimate in this context. 
In section \S \ref{sec:shrink} we show that the quadratic form $a_h$
associated to $\Acal_h$ is comparable and in fact `asymptotic'  to 
the quadratic form $q_h(u) = \int_{\Omega} h^2 \cdot |u_x|^2 + |u_y|^2$
on $H^1_0(\Omega)$.  This allows us to use the non-concentration estimate
for $\Acal_h$ to identify the semi-classical limits of real-analytic eigenvalue 
branches $h \mapsto E_h$ of $q_h$ under Dirichlet and certain mixed conditions.

The quadratic form $q_h$ is the pull-back of the Dirichlet energy 
on a domain $\Omega_h$ obtained by stretching $\Omega$ horizontally.
In \S \ref{sec:symmetric} we consider the domain $\tOmega_h$ obtained
by reflecting $\Omega$ across the $x$-axis. For example, $\tOmega_h$ is 
the ellipse $\Ecal_h$ if $L(x)= \sqrt{1-x^2}$. The symmetry of $\tOmega_h$ 
allows one to decompose into `even' and `odd' functions.  
The results of \S \ref{sec:shrink} immediately imply that 
limits of `odd' eigenvalue branches and the limits of `even' 
eigenvalue branches are distinct. By specializing this result 
to the family of ellipses  and by using Bourget's hypothesis 
we are able to  prove Theorem \ref{thm:main}.

In \S \ref{sec:ellipsoids} we use the same method to prove
that the generic ellipsoid in $\R^3$ has simple spectrum.

\newpage

\section{A one-dimensional non-concentration estimate}
\label{sec:line}

For each $u\in \Cinf_0(\R)$ and $h \in \op 0, \infty \cl$, define 
$
  P_hu\,=\,-h^2 \cdot u''\,+\,V \cdot u.
$
That is, $P_h$ is a one-dimensional semi-classical Schr\"odinger operator.
For basic results on Schr\"odinger operators, see for example \S 7.1 of  \cite{Borthwick}.

We will assume that the `potential' $V$ satisfies the following conditions:
\begin{enumerate}
\item[V1.] $V$ is positive and smooth and $\lim\limits_{\pm \infty} |V(x)|\,=\,+\infty.$

\item[V2.] If  $x\neq 0$, then $x \cdot V'(x)>0$.

\end{enumerate}

Condition V2 implies that $V$ attains its 
global minimum at $x=0$ and hence $P_h$ is semi-bounded from below. We will use $P_h$ to denote the Friedrichs extension.\footnote{In fact, $P_h$ is essentially self-adjoint, but this is not important for us.} 
Condition V1 implies that $P_h$ has compact resolvent, and so its spectrum,  $\spec(P_h)$, is discrete and consists of eigenvalues.
Condition V1 also implies that each eigenspace of $P_h$ is 1-dimensional. 
For each eigenvalue $\lambda$, choose an $L^2$-normalized
eigenfunction $\psi_{\lambda}$.
Then $\dis (\psi_{\lambda})_{\lambda\in \spec(P_h)}$ is a Hilbertian basis for $L^2(\R)$: Each $u \in L^2(\R)$  may be written as
\[
u\,
=\,
\sum_{\lambda \in \spec(P_h)} \langle 
\psi_\lambda, u\rangle 
\cdot 
\psi_\lambda
\]
where
$
\dis
\langle v,u \rangle = \int_{\Rbb} \bar{v} \cdot u\, dx
$.

It is customary to set $\dis   H^1_{P_h} := \dom ((P_h+1)^{\frac{1}{2}})$, and 
\[
  \begin{split}
    \| u\|_{H^1_{P_h}}^2& := \| (P_h+1)^{\frac{1}{2}}u\|_{L^2}^2\\
&= 
h^2\int_\R |u'(x)|^2 \,dx\,
+
\,\int_{\R} V(x) \cdot |u(x)|^2\,dx\,
+
\,
\int_{\R}|u(x)|^2\, dx.\\
    & = \sum_{\lambda\in \spec(P_h)} (\lambda\,+\,1) 
    \cdot 
    |\langle \psi_\lambda, u\rangle|^2.
  \end{split}
\]

We denote by $H^{-1}_{P_h}$ the dual space to $H^1_{P_h}$ equipped with the dual norm.
If $u\in H^1_{P_h}$ and if  $E\in \R$, then $(P_h-E)u$ belongs to $H^{-1}_{P_h}$.
We have 
\begin{eqnarray}
    {\|(P_h-E)u\|_{H^{-1}_{P_h}}}
    &=&
    \left(
    \sum_{\lambda\in \spec P_h} \frac{(\lambda-E)^2}{\lambda+1}
    \cdot 
    |\langle 
    \psi_\lambda,u\rangle|^2
    \right)^{\und}
    \label{eqn:H-minus-1}
    \\
  &=&
    \sup    
    \left\{
    \frac{\big|h^2\int \bar{u}'(x)v'(x) \,dx \,+\,\int (V(x)-E)\bar{u}(x)v(x)\,dx \big|}{\|v\|_{H^1_{P_h}}}, v\in H^1_{P_h} 
    \right\}
    \nonumber
\end{eqnarray}

The following proposition implies a non-concentration result for $O(h)$ quasimodes.

\begin{prop}
\label{prop:ncline}
  Let $K$ be compact set such that 
  $K\subset \mathopen] V(0),+\infty \mathclose[$.\\
  For any $\eps>0$ there exists a
  constant $C$ and $h_0$ such that, if $h\in \mathopen]0 ,h_0 \mathclose]$,  $E\in K$
  and $u\in \Cinf_0(\R)$, then 
  \[
    \|u\|_{H^1_{P_h}}\,
    \leq\,
    \frac{\eps}{h} \cdot \|(P_h-E)u\|_{H^{-1}_{P_h}}\,
    +\,
    C
    \cdot 
    \|h \cdot u'\|_{L^2}.
  \]
\end{prop}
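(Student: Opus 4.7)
The strategy is to first reduce the inequality to a pure $L^2$ bound on $u$, and then to establish that $L^2$ bound via a spectral decomposition that uses essentially the hypothesis $\min K > V(0)$.

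\emph{Reduction.} For $u\in \Cinfz(\R)$ one has the algebraic identity
$$
\|u\|_{H^1_{P_h}}^2 \;=\; \langle u,(P_h-E)u\rangle + (E+1)\,\|u\|_{L^2}^2.
$$
Combining this with the duality pairing $|\langle u,(P_h-E)u\rangle| \le \|u\|_{H^1_{P_h}}\,\rho$ (writing $\rho := \|(P_h-E)u\|_{H^{-1}_{P_h}}$) and Young's inequality gives $\|u\|_{H^1_{P_h}} \le \rho + \sqrt{E+1}\,\|u\|_{L^2}$. Since $\sqrt{E+1}$ is uniformly bounded on $K$, it suffices to prove that for every $\eps'>0$ there exist $C', h_0$ such that
$$
\|u\|_{L^2} \;\le\; \frac{\eps'}{h}\,\rho \;+\; C'\,\|h u'\|_{L^2}
$$
for every $h\in \mathopen]0, h_0\mathclose]$, $E\in K$ and $u\in \Cinfz(\R)$.

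\emph{Spectral splitting.} Fix $\tilde E$ with $V(0)<\tilde E<\min K$ and a small $\delta>0$, and decompose $u=v+m+r$ according to the $L^2$-orthogonal spectral projections of $P_h$ onto the ranges $\{\lambda<\tilde E\}$, $\{|\lambda-E|\le\delta\}$ and the remainder. The ratio $(\lambda-E)^2/(\lambda+1)$ is bounded below uniformly for $E\in K$ on the spectral supports of $v$ and $r$: for $v$ by the gap $\min K-\tilde E$; for $r$ by the gap $\delta$ on a bounded range of $\lambda$ and by $(\lambda-E)^2/(\lambda+1)\sim \lambda$ for $\lambda\to\infty$. Hence formula \refeq{eqn:H-minus-1} yields $\|v\|_{L^2}\le C_v\,\rho$ and $\|r\|_{L^2}\le C_r\,\rho$, and the same spectral gap also gives $\langle v,P_h v\rangle + \langle r,P_h r\rangle \le C\,\rho^2$, hence $\|h(v+r)'\|_{L^2}^2 \le C\,\rho^2$. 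Taking $h_0$ so small that $(C_v+C_r)h_0\le \eps'$ makes the combined contribution of $\|v\|+\|r\|$ to the desired $L^2$ bound fit within $(\eps'/h)\rho$.

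\emph{Middle term and main obstacle.} Only $\|m\|_{L^2}$ remains to be controlled. From $\|hm'\|_{L^2}^2 \le 2\|hu'\|_{L^2}^2+2\|h(v+r)'\|_{L^2}^2 \le 2\|hu'\|_{L^2}^2 + 2C\,\rho^2$, it is enough to establish a reverse inequality $\|m\|_{L^2}^2 \le c_K^{-1}\,\|hm'\|_{L^2}^2$ with $c_K>0$ independent of $h$. For a single eigenfunction $\psi_\lambda$ in the window $|\lambda-E|\le\delta$, the identity $\|h\psi_\lambda'\|_{L^2}^2 = \lambda-\int V|\psi_\lambda|^2$ combined with a WKB analysis shows that this quantity tends, uniformly for $\lambda$ in compact subsets of $(V(0),\infty)$, to the ratio of action to period of the classical closed orbit at energy $\lambda$, which is strictly positive thanks to V2. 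The main obstacle is to promote this per-mode lower bound to a uniform lower bound on the full finite-dimensional subspace spanned by the window eigenfunctions, because the off-diagonal matrix elements $\langle \psi_\lambda,V\psi_\mu\rangle = -h^2\int \psi_\lambda'\psi_\mu'$ are not a priori small; controlling them requires a semi-classical / defect-measure argument, once again crucially using V2 to ensure that the energy shells in the window are closed orbits lying in a fixed compact set.
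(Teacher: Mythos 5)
Your reduction to the $L^2$ estimate is correct, and the treatment of the low-energy block $v$ and the far block $r$ is sound. But the proof stops precisely at what you yourself call ``the main obstacle'': the reverse inequality $\|m\|_{L^2}^2 \le c_K^{-1}\|hm'\|_{L^2}^2$ on the window $\{|\lambda-E|\le\delta\}$. This is not a routine step --- it is the entire content of the proposition --- and your choice of window makes it strictly harder than necessary. By the Bohr--Sommerfeld spacing $\lambda_{i+1,h}-\lambda_{i,h}\asymp h$, your window contains on the order of $\delta/h$ eigenvalues, so the matrix of the form $v\mapsto\|hv'\|_{L^2}^2$ in the eigenbasis has size growing like $1/h$. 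Knowing that each off-diagonal entry tends to $0$ (which is what the defect-measure argument gives) says nothing about the operator norm of the off-diagonal part of a matrix whose dimension is unbounded in $h$; you would need a quantitative, essentially summable decay in $|i-j|$, which you do not establish.

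The paper sidesteps this by arguing by contradiction. The contradiction hypothesis supplies the quasimode bound $\|(P_h-E_h)u_h\|_{H^{-1}_{P_h}}\le Mh\,\|u_h\|_{H^1_{P_h}}$, which makes the remainder after projecting onto a \emph{fixed} number $2N+1$ of modes nearest $E_h$ satisfy $\|r_h\|^2_{H^1_{P_h}}\le (c_3/N^2)\,\|u_h\|^2_{H^1_{P_h}}$ --- small \emph{relative} to $u_h$ even though the spectral gap on either side is only $O(Nh)$. The middle block then has fixed dimension independent of $h$, so pointwise convergence of the diagonal entries to $\beta=\int_{\Sigma_{E_0}}\xi^2\,d\mu_{E_0}>0$ and of the off-diagonal entries to $0$ (this is where Zelditch's note \cite{Zelditch-Note} is invoked) already yields $\|hv_h'\|^2\ge(\beta/2)\|v_h\|^2$ for $h$ small, and the contradiction follows by taking $N$ large. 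To close your argument you would either have to adopt this fixed-$N$/contradiction structure, or supply the stronger off-diagonal decay required for a window whose dimension grows like $1/h$.
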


\begin{proof}
  By assumption, there exists an interval $[a,b]$ such that $K\subset \op a,b \cl$ and $[a,b] \subset \op V(0),+\infty \cl$. We argue by contradiction.
  Suppose that the statement is not true, and let $\N^*$ denote the
  set of positive integers. 
  Then there exists $E_0\in K$, a constant $M$, 
  a subset $\Hbb \subset \op 0, \infty \cl$ with zero as an  accumulation point, and
  functions $E: \Hbb \to \R$ and $\eps: \Hbb \to \R$, such that as 
  $h \in \Hbb$ tends to zero, we have  $E_{h} \to E_0$, $\eps_{h} \to 0$, 
  \begin{equation}
  \label{eqn:quasimode}
  \| (P_h-E_h)u_h\|_{H^{-1}_{P_h}}\,
  \leq\,
  M 
  \cdot 
  h
  \cdot
  \|u_h\|_{H^1_{P_h}},
  \end{equation}
  and 
  \begin{equation}
  \| h \cdot u'_h\|_{L^2}\,
  \leq\, 
  \eps_h
  \cdot 
  \|u_h\|_{H^1_{P_h}}.
\end{equation}
  In the language of semiclassical analysis, 
  equation (\ref{eqn:quasimode})
  means that $u_h$ is a quasimode for $P_h$ of order $O(h)$.
  
The assumptions on $V$ imply that the compact interval $[a,b]$ consists of non-critical energies.
  Using semiclassical analysis, the distribution of the eigenvalues of $P_h$ in $K$ is thus well-understood.\footnote{See 
  \cite{Dimassi-Sjostrand}, \cite{Zworski}, \cite{HMR}, \cite{CdV}
  and the references therein.}
  In particular, using Bohr-Sommerfeld rules for instance, we know that the spectrum 
  in the interval $[a,b]$ is given by an (ordered) sequence $\lambda_{i,h}$ and that there exist two positive constants $c_1$ and $c_2$ such that 
  if  $h$ is small enough, then, for any eigenvalue $\lambda_{i,h}$ of $P_h$ in $[a,b]$ we have 
\begin{equation}
\label{eqn:dist-est}
c_1 \cdot h~
    \leq~
    \lambda_{i+1,h}-\lambda_{i,h}~
    \leq~
    c_2 \cdot h.
\end{equation}
The number $E_h$ lies in the closure of some component of $\R \setminus \spec P_h$, thus, there exists a unique integer $i_h$ so that
\[ 
\lambda_{i_h,h}~
\leq~ 
E_h~
<~
\lambda_{i_h+1,h}~
\leq~
\lambda_{i_{h},h}\,+\,c_2 \cdot h.  
  \]
  It follows that, for each sufficiently large  integer
  $N$ and $h$ small enough:
  \[
    a~
    <~
    \lambda_{i_h-N,h}~
    \leq~
    \lambda_{i_h,h}-N \cdot c_1
    \cdot h~
    \leq~
    \lambda_{i_h,h}~
    \leq~
    \lambda_{i_h,h}~
    +~
    N
    \cdot 
    c_1
    \cdot 
    h~
    \leq~
    \lambda_{i_h+N,h}~
    <~
    b. 
  \]
Using $N$, we decompose $u_h$  onto those modes that are close to $E_h$ and those that are far from $E_h$.\\
 Concretely, we define the 
 projection of  $u_h$ onto the modes close to $E_h$ to be 
 {
  \[
    v_h\,
    :=\, 
    \sum_{j=-N}^N \langle \psi_{{i_h+j,h}}, u_h \rangle 
    \cdot 
    \psi_{i_h+j,h}. 
  \]
  }
  We now show that $v_h$ approximates $u_h$ as $h$ goes to zero.
  Define $r_h:= u_h-v_h$ to be the remainder.
  Using the definitions of the $H^1_{P_h}$ and $H^{-1}_{P_h}$ norms we have
\begin{equation}
\label{eqn:remainder}
    \| r_h\|^2_{H^1_{P_h}}\,\leq\,
    \max 
    \left\{
    \frac{(\lambda_{i_h-N-1,h}+1)^2}{(\lambda_{i_h-N-1,h}-E_h)^2},\frac{(\lambda_{i_h+N+1,h}+1)^2}{(\lambda_{i_h+N+1,h}-E_h)^2}
    \right\}
    \cdot 
\| (P_h-E_h)u_h\|_{H^{-1}_{P_h}}^2. 
\end{equation}
The prefactor on the right of (\ref{eqn:remainder}) is bounded above by 
$
  \display \frac{(b+1)^2}{(Nc_1-c_2)^2 \cdot h^2}\,
  \leq   
    \frac{c_3}{N^2\cdot h^2}
$
for {$N$} large.
By combining this with (\ref{eqn:quasimode})---the assumption that $u_h$ 
is a quasimode---we obtain the following estimate for $h$ small and $N$ large:
  \[
    \| r_h\|^2_{H^1_{P_h}}\,
    \leq\,
    \frac{c_3}{N^2} \cdot  \| u_h\|_{H^1_{P_h}}^2.
  \]
  Here $c_3$ is a constant that depends neither on $h$ nor on $N$.

We next proceed to estimate $\| v_h\|^2$ by comparing it to 
$\|h \cdot v_h'\|_{L^2}^2$.
Set $V_{h,N}$ to be the span of 
$
\dis \{ \psi_{i,h}\, :\, |i- i_h| < N \}.
$
By construction $v_h$ belongs to each $V_{h,N}$.
The space $V_{h,N}$ is a $2N+1$-dimensional vector space.

  Let $\Bcal_{h,N}: V_{h,N} \to \R$ denote the quadratic form
  defined by 
  $
  \Bcal_{h,N}(v)\,
  =\,\| 
  h \cdot v'\|_{L^2}^2
  $.
This quadratic form is represented by a hermitian matrix $B_{h,N}$ in the 
basis of eigenfunctions.

The small $h$ behavior of the diagonal entries of $B_{h,N}$
is described by  semiclassical  {(defect)}  measures
{(see for example \cite{Zworski})}.
In  dimension $1$, there is only one  semiclassical measure
associated to a noncritical energy.
Indeed, this measure, denoted $\mu_{E_0}$, equals 
the Liouville measure on $T^* \R$ restricted
  to the energy shell\footnote{See the appendix 
  of \cite{HMR} for precise statements.}
  $$
  \Sigma_{E_0}~
  =~
  \{ (x,\xi)\, :\, \xi^2\, +\, V(x)\, =\, E_0 \}.
  $$
From this, we find that each  diagonal entry of $B_h$ converges to
the positive number 
 \[
  \beta\,
  =\,
  \int_{\Sigma_{E_0}} \xi^2 \,d\mu_{E_0}.
\]
A nice argument of Steve Zelditch in \cite{Zelditch-Note} when semiclassically reformulated, 
shows that, for any $j,k$ between $-N$ and $N$, there exists a (complex) measure $\mu_{jk}$ such that 
\[
\forall a,~~\langle \mathrm{Op}_h(a)\psi_{i_h+j},\psi_{i_h+k}\rangle 
\tendsto{h}{0} \int a d\mu_{jk},
\]
and that, moreover, the measure $\mu_{jk}$ is absolutely continuous with respect to 
$\mu_{E_0}$. We can thus write 
\[
\mu_{jk} = f_{jk}\,\mu_{E_0}
\]
for some integrable $f_{jk}$.
It follows that the matrix $\Bcal_{h,N}$ converge to the matrix 
\[
B_{0} = 
\left[
\int_{\Sigma_{E_0}} \xi^2 f_{jk} d\mu_{E_0}
\right]_{\substack{-N\leq j\leq N \\-N \leq k \leq N}}
\]
By construction, this matrix is Hermitian and we claim that it is also positive. For, otherwise, there would be a normalized eigenvector 
$(w_{j})_{-N\leq j\leq N}$ such that 
\[
\int_{\Sigma_{E_0}} \xi^2 \sum_{j,k} \overline{w_j}f_{jk}w_k d\mu_{E_0} \leq 0.
\]
The latter implies that the integrable function $\sum\limits_{j,k} \overline{w_j}f_{jk}w_k$ vanishes. This is in contradiction with 
the fact that the measure $\sum\limits_{j,k} \overline{w_j}f_{jk}w_k d\mu_{E_0}$ is the semiclassical measure associated with the sequence 
\[
w_h = \sum_{j=-N}^N w_j \psi_{i_h+j,h},
\]
and, as such, is a probability measure. 

Denote by $\beta$ the smallest (positive) eigenvalue of $B_0$.  

For $h$ small enough, we have
  \[
    \begin{split}
      \| h \cdot v_h'\|^2 & \geq\, \frac{\beta}{2} \cdot \|v_h\|_{L^2}^2\\
      & \geq \frac{\beta}{2(b+1)} \cdot \|v_h\|^2_{H^1_{P_h}} \\
      & \geq \frac{\beta}{2(b+1)} \cdot \left(\|u_h\|^2_{H^1_{P_h}}-\|r_h\|^2_{H^1_{P_h}} \right)\\
      & \geq \frac{\beta}{2(b+1)} \cdot 
      \left(1-\frac{c_3}{N^2} \right)
      \cdot \|u_h\|^2_{H^1_{P_h}}.
    \end{split}
  \]
  This yields a contradiction with the definition of the sequence $\eps_h$ 
  provided that  $N$ is sufficiently large.
\end{proof}

\begin{remk} 
Note that, by using a semiclassical quantization, the controlling term 
$\| h \cdot v_h'\|_{L^2}^2$ may be thought of 
as $\langle \mathrm{Op}_h(\xi^2) u_h, u_h \rangle$. 
In the proof we can replace $\xi^2$ by any non-negative symbol $a$ of order $0$ provided that $\dis \int a\, d\mu_{E_0} >0$.
\end{remk}

\begin{remk}
Because $\int \xi^2\, d\mu_{E_0}>0$,
the probability measure 
$\mu_{E_0}$ is not concentrated at the turning points for noncritical $E$.
This can also be observed by using an Airy function analysis near a turning point (see \cite{HJCMP} Proposition 9.1 and Remark 9.3).
Proposition \ref{prop:ncline}
can thus be rephrased as \textit{any $O(h)$ quasimode cannot concentrate on the turning points}. The corresponding statement for an $o(h)$ quasimode
is a straightforward implication of the invariance of semiclassical measures under the hamiltonian flow of the symbol. 
\end{remk}


\newpage

\section{An application to a singular Schrödinger operator}
\label{sec:interval}

To study the eigenvalue problem for an ellipse, we will use
singular Schrödinger operators on $\op -1, 1 \cl$.
In this section we derive a non-concentration estimate of the same
form as above.

Let $L$ be a smooth positive function on $\op -1, 1 \cl$, that satisfies the following assumptions:
\begin{enumerate}
\item[L1.] For any $x\in \op -1, 1 \cl$, we have $x \cdot L'(x) < 0$ when $x\neq 0$.
\item[L2.]
$L$ extends continuously to $[-1,1]$ by setting $L(\pm 1)=0.$
%
\end{enumerate}
%
The function $L^{-2}$ will be used to construct a potential 
that satisfies conditions V1 and V2 of \S \ref{sec:line}.

\begin{remk}
    For our application to the disk, we will have that $L$ vanishes like $\sqrt{(1-x)}$ at $x=1$ (resp. $\sqrt{1+x}$ at $x=1$). We note here 
    that for functions $L$ that either do not vanish at $x=\pm 1$ or that vanish to order $1$, then the results below would follow directly of \cite{HJCMP}.
\end{remk}

Let $\Hcal = L^2 \left(\op -1, 1 \cl, L(x)\,dx \right)$ 
and  let $A_h$ be the symmetric operator defined on $\Cinf_0(\op -1, 1 \cl)$ by 
\[
A_hu\,
:=\, 
-h^2
\cdot 
\frac{1}{L}(Lu')'\,+\,\frac{1}{L^2}u.
\]
The study of the essential self-adjointness of $A_h$ is a standard procedure that we do not pursue here.
We simply observe that the quadratic form $\langle A_h u,u\rangle_{\Hcal}$ is non-negative, so that we may consider 
the Friedrichs extension that we still denote by $A_h$. 

The operator $A_h$ is non-negative and has compact resolvent so that its spectrum consists of eigenvalues and we may define spaces 
$H^1_{A_h}$ and $H^{-1}_{A_h}$ in the same way that 
we defined $H^1_{P_h}$ and $H^{-1}_{P_h}$
as in the previous section.  

For  $\delta\in \op 0,1 \cl$, we denote by $I_\delta$ the interval $[-1+\delta, 1-\delta]$.
The following proposition is a consequence of proposition \ref{prop:ncline}.

\begin{prop}
\label{prop:ncinterval}
\ 
  Let $b> a > 1/L(0)^2$.
  There exists $\delta_0\in \op 0,1 \cl$ such that the following holds. 
  For any $\delta\in \op 0,\delta_0 \cl$ and any $\eps>0$ there exist
  constants $C$ and $h_0$ such that if  $h\in \op0,h_0 \cl$, if  $E\in [a,b]$
  and if  $u\in \Cinf_0(I_\delta)$ then 
  \[
    \|u\|_{H^1_{A_h}}\,\leq\,\frac{\eps}{h}
    \cdot 
    \|(A_h-E)u\|_{H^{-1}_{A_h}}\,
    +\,
    C \cdot \|h \cdot u'\|_{\Hcal}.
  \]
\end{prop}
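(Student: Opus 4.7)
My plan is to reduce to Proposition \ref{prop:ncline} by applying the Liouville substitution $v := L^{1/2} u$. A direct computation, in which the first-order term in $A_h u$ cancels by design, shows that on $I_\delta$
\[
L^{1/2} A_h u \,=\, -h^2 v'' \,+\, \bigl(L^{-2} + h^2 W \bigr)\, v, \qquad W \,:=\, \frac{L''}{2L} \,-\, \frac{(L')^2}{4L^2}.
\]
Since $L>0$ on $I_\delta$, the function $W$ is smooth and bounded there, so $h^2 W$ will be treated as a perturbation for small $h$. Assumption L1 guarantees that $L^{-2}$ attains its minimum $L(0)^{-2}$ at $0$ and that $x\cdot (L^{-2})'(x) > 0$ for $x\in\op -1,1\cl \setminus \{0\}$. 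Choosing $\delta_0$ small enough that $a > L(0)^{-2}$, we extend $L^{-2}|_{I_{\delta_0}}$ smoothly to a potential $V$ on $\R$ that satisfies V1 and V2, and let $P_h := -h^2\partial_x^2 + V$ be the associated Schrödinger operator.

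For $u\in \Cinf_0(I_\delta)$ with $\delta\leq\delta_0$, integration by parts, using that $L, 1/L, L', L''$ are uniformly bounded on $I_\delta$, yields the norm equivalences
\[
\|u\|_\Hcal \,=\, \|v\|_{L^2}, \qquad \|u\|_{H^1_{A_h}}^2 \,=\, \|v\|_{H^1_{P_h}}^2 \,+\, O(h^2)\|v\|_{L^2}^2, \qquad \|h u'\|_\Hcal^2 \,=\, \|h v'\|_{L^2}^2 \,+\, O(h^2)\|v\|_{L^2}^2,
\]
together with a similar $(1+O(h^2))$ comparison between $\|(A_h-E)u\|_{H^{-1}_{A_h}}$ and $\|(\widetilde{P}_h-E)v\|_{H^{-1}_{P_h}}$, where $\widetilde{P}_h := -h^2\partial_x^2 + L^{-2} + h^2 W$ on $I_\delta$. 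Applying Proposition \ref{prop:ncline} to $v$ with potential $V$ and energies $E\in[a,b]\subset \op L(0)^{-2}, +\infty \cl$ gives
\[
\|v\|_{H^1_{P_h}} \,\leq\, \frac{\eps/2}{h}\,\|(P_h-E)v\|_{H^{-1}_{P_h}} \,+\, C\,\|h v'\|_{L^2}.
\]
Writing $(P_h-E)v = (\widetilde{P}_h-E)v - h^2 W v$ and bounding $\|h^2 W v\|_{H^{-1}_{P_h}} \leq C' h^2 \|v\|_{H^1_{P_h}}$ introduces a term $\frac{\eps h C'}{2}\|v\|_{H^1_{P_h}}$ that is absorbed into the left-hand side for $h$ sufficiently small. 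Substituting the norm equivalences then yields the desired estimate for $u$.

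The main technical difficulty is the $H^{-1}$ norm equivalence, because $H^1_{A_h}$-test functions are defined on $\op -1, 1\cl$ while $H^1_{P_h}$-test functions are defined on $\R$. Since $u\in \Cinf_0(I_\delta)$, both $(A_h-E) u$ and $(\widetilde{P}_h-E) v$ are supported in $I_\delta$, so in each dual supremum the relevant test functions may be taken supported in a fixed compact neighborhood of $I_\delta$ on which $L^{\pm 1/2}$ is smooth; the correspondence $s = L^{1/2} w$ then intertwines the two dualities up to $(1+O(h^2))$ factors.
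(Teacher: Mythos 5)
Your Liouville substitution $v = L^{1/2}u$ is a genuinely different reduction from the paper's. The paper keeps $u$ and compares $A_h u$ directly with $P_h u$, where $P_h = -h^2\partial_x^2 + V_\delta$; the two differ by the first-order term $-h^2\frac{L'}{L}u'$, which feeds an extra error $m_3\, h\,\|h u'\|_\Hcal$ into the $H^{-1}$ comparison (Lemma~\ref{lem:equivAP}, estimate~(\ref{eqn:quasi-equiv})), and that error is then harmlessly absorbed by the $C\|h u'\|_\Hcal$ term in the conclusion. Your substitution instead kills the first-order term exactly and replaces it by a zeroth-order perturbation $h^2 W$; the resulting error lands as an $O(h)\cdot\frac{\eps}{h}\|v\|_{H^1_{P_h}}$ term that you absorb into the left-hand side. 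Both routes work and are of comparable length; the Liouville version is cleaner conceptually (it reduces to an exact Schr\"odinger operator plus a small multiplicative potential perturbation) whereas the paper's version is more in keeping with its overall philosophy of comparing the bilinear forms directly and pushing all geometry-related errors into $\|h u'\|$-type terms.

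Two small imprecisions. First, the sentence ``Choosing $\delta_0$ small enough that $a > L(0)^{-2}$'' is off: $a > L(0)^{-2}$ is part of the hypothesis, not something $\delta_0$ can arrange. What $\delta_0$ must actually guarantee is that $L^{-2}$ on $I_{\delta_0}$ admits a smooth extension $V$ to $\R$ satisfying V1--V2 (a single well with minimum $V(0)=L(0)^{-2}$), so that Proposition~\ref{prop:ncline} applies with $K=[a,b]$. Second, the claimed ``$(1+O(h^2))$ comparison'' of the two $H^{-1}$ norms is optimistic: the intertwining pairing $\langle (\widetilde P_h - E)v, \chi\psi\rangle_{L^2} = \langle (A_h-E)u, L^{-1/2}\chi\psi\rangle_\Hcal$ is indeed exact (a calculation identical to your Liouville computation shows the $W$-term reappears on the form side), but the cutoff $\chi$ needed to pass from $\psi\in\Cinf_0(\R)$ to something in $H^1_{A_h}$ introduces a $\delta$-dependent constant, not $1+O(h^2)$. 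That constant is harmless — one simply runs Proposition~\ref{prop:ncline} with a smaller $\eps'$ — but it should be acknowledged rather than claimed to be $1+O(h^2)$. With those caveats the argument closes correctly.
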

\begin{remk}
  We warn the reader that the interval $[a,b]$ in the statement of  
  Proposition \ref{prop:ncinterval} 
  corresponds to the compact set $K$ in the statement Proposition \ref{prop:ncline}
  and not to the interval used in the proof of Proposition \ref{prop:ncline}. 
\end{remk}

\begin{proof}
Because $L$ is continuous and $L(-1)=0=L(1)$, there exists $\delta_0 \in \op 0, 1\cl$ 
so that $x \notin I_{\delta_0}$ implies that $L(x)^{-2} \geq b+1$.
    Let $\delta \in \op 0, \delta_0\cl$.
  Choose a potential $V_\delta$ that coincides
  with $\frac{1}{L^2}$ on $I_{\frac{\delta}{2}}$ 
  and that satisfies assumptions V1 and V2 of \S \ref{sec:line}. 
  See Figure \ref{fig:potential}.
  Define $P_h u:= -h^2 \cdot u'' + V_{\delta} \cdot u$.

\begin{figure}
\label{fig:potential}
\caption{The construction of the potential $V_{\delta}$.}
\includegraphics[scale=.3]{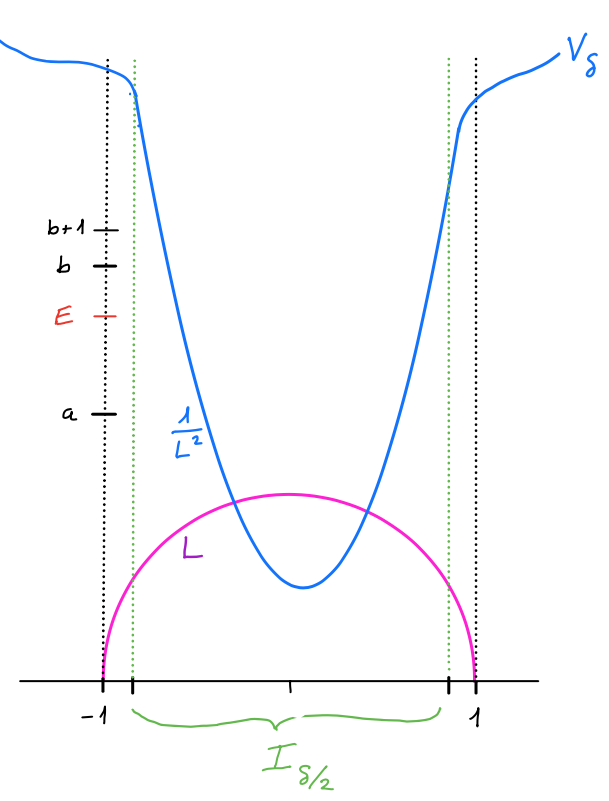}
\end{figure}
  
  Let $\epsilon>0$.
  Proposition \ref{prop:ncline} applies to give a constant $C$ so that 
  for any $u\in \Cinf_0(I_\delta)$
  $$
    \|u\|_{H^1_{P_h}}\,
    \leq\,
    \frac{\eps}{h} 
    \cdot 
    \|(P_h-E)u\|_{H^{-1}_{P_h}}\,
    +\,
    C 
    \cdot 
    \|h \cdot u'\|_{L^2(\R)}.
  $$
 The claim now follows from \ref{lem:equivAP} below. 
\end{proof}

\begin{lem}\label{lem:equivAP}
Let $a,b,\delta$ and $V_\delta$ be as 
in the proof of Proposition \ref{prop:ncinterval}.
Then, there exist constants $m_i$ and $M_i,~i=1,2,3$, such that
for any $u\in \Cinf_0(I_\delta)$ 
\begin{eqnarray}
m_1
\cdot 
\| u\|_{L^2(\R)}
&\leq &
\|u\|_{\Hcal}~
\leq~
\frac{1}{m_1}
\cdot 
\|u\|_{L^2(\R)} 
\label{eqn:norms-equiv}
\\
m_2
\cdot 
\|u\|_{H^1_{P_h}}
&\leq & 
\|u\|_{H^1_{A_h}}~
\leq~
\frac{1}{m_2}
\cdot 
\|u\|_{H^1_{P_h}}
\label{eqn:H1norms-equiv}
\\
\|(P_h-E)u\|_{H^{-1}_{P_h}}
&\leq &
\frac{1}{m_3} \cdot \|(A_h-E)u\|_{H^{-1}_{A_h}}~
+~
m_3 \cdot h \cdot \|h \cdot u'\|_\Hcal.
\label{eqn:quasi-equiv}
\end{eqnarray}
\end{lem}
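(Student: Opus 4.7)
The key structural remark is that the support condition $\supp u \subset I_\delta$ confines all integrands to the compact set $I_\delta$ on which $L$, $1/L$, and $L'/L$ are uniformly bounded, and that on the slightly larger $I_{\delta/2}$ the equality $V_\delta = L^{-2}$ still holds. Inequality (\ref{eqn:norms-equiv}) is then immediate from
$\min_{I_\delta} L \cdot \int_\R |u|^2\,dx \leq \int_\R |u|^2 L\,dx \leq \max_{I_\delta} L \cdot \int_\R |u|^2\,dx$,
which gives the constant $m_1$.

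For (\ref{eqn:H1norms-equiv}), I would integrate by parts in $\Hcal$ to obtain
\[
\|u\|_{H^1_{A_h}}^2\,=\,h^2\int L|u'|^2\,dx\,+\,\int \frac{|u|^2}{L}\,dx\,+\,\int L|u|^2\,dx,
\]
and match it termwise against
\[
\|u\|_{H^1_{P_h}}^2\,=\,h^2\int |u'|^2\,dx\,+\,\int \frac{|u|^2}{L^2}\,dx\,+\,\int |u|^2\,dx,
\]
using $V_\delta = L^{-2}$ on $\supp u$. The ratios of matching integrands ($L$ vs.\ $1$; $1/L$ vs.\ $1/L^2$; $L$ vs.\ $1$) are all pinched between positive constants on $I_\delta$, yielding $m_2$.

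The substantive step is (\ref{eqn:quasi-equiv}). Expanding $A_h u = -h^2 u'' - h^2 (L'/L) u' + u/L^2$ and using $V_\delta = 1/L^2$ on $\supp u$ gives the pointwise identity
\[
(P_h-E)u\,=\,(A_h-E)u\,+\,h^2\frac{L'}{L}u'.
\]
To convert this into a dual-norm estimate, I pair with an arbitrary $v \in H^1_{P_h}$ and insert a fixed cutoff $\chi \in \Cinf_0(I_{\delta/2})$ with $\chi \equiv 1$ on $I_\delta$. Since $u$ and $u'$ vanish off $I_\delta$, the pairing $\langle (P_h-E)u, v\rangle_{L^2}$ is unchanged upon replacing $v$ by $\chi v$, and a direct computation (using $h \leq 1$ to absorb the contribution of $h^2\|\chi'\|_\infty^2 \int |v|^2\,dx$) shows $\|\chi v\|_{H^1_{P_h}} \leq C \|v\|_{H^1_{P_h}}$ uniformly in $h$. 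Having reduced to $v$ supported in $I_{\delta/2}$, I would rewrite
\[
\langle (A_h-E)u, \chi v\rangle_{L^2}\,=\,\langle (A_h-E)u, \chi v/L\rangle_{\Hcal},
\]
and apply the product rule together with the boundedness of $L$, $1/L$, $L'$ on $\supp \chi$ to get $\|\chi v/L\|_{H^1_{A_h}} \leq C' \|v\|_{H^1_{P_h}}$, so this term is controlled by $\|(A_h-E)u\|_{H^{-1}_{A_h}}$. For the remaining term $h^2 \int (L'/L) u'\, \overline{\chi v}\,dx$, the $L^\infty$ bound on $L'/L$ over $I_\delta$ combined with Cauchy-Schwarz and (\ref{eqn:norms-equiv}) produces a bound of the form $h \cdot C''\cdot \|h u'\|_\Hcal \cdot \|v\|_{H^1_{P_h}}$. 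Taking the supremum over admissible $v$ yields (\ref{eqn:quasi-equiv}).

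The only point that requires real care is the reduction to compactly supported test functions, since the $H^{-1}_{P_h}$ norm is defined by pairing against all of $H^1_{P_h}$; beyond this truncation, the proof is a sequence of straightforward weight comparisons enabled by the uniform bounds on $L$ and $L'/L$ over $I_\delta$.
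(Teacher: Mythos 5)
Your proof is correct and follows essentially the same route as the paper's: both establish (\ref{eqn:norms-equiv}) and (\ref{eqn:H1norms-equiv}) by comparing the matching integrands on $I_\delta$, and both prove (\ref{eqn:quasi-equiv}) by inserting a cutoff $\chi$ supported in $I_{\delta/2}$, passing to the test function $\psi=\chi v/L$ for the $\Hcal$-pairing, and peeling off the commutator term $h^2(L'/L)u'$. The only presentational difference is that you state the pointwise operator identity $(P_h-E)u=(A_h-E)u+h^2(L'/L)u'$ up front, whereas the paper performs the equivalent manipulation directly inside the integral $I(\phi)$; this does not change the substance of the argument.
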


\begin{proof}
Everywhere in the proof, we will use the fact that $u$ has support in ${I_\delta=[-1+\delta,1+\delta]}$.

Estimate (\ref{eqn:norms-equiv}) follows from the fact that $L$ is uniformly bounded above and below on $I_\delta$.
For (\ref{eqn:H1norms-equiv}) we observe that 
\begin{eqnarray*}
\|u\|_{H^1_{P_h}}^2 
&=&
\int_{I_\delta} |h \cdot u'(x)|^2 \,dx~
+\,\int_{I_\delta} (V(x)+1) \cdot |u(x)|^2\, dx 
\label{eqn:P_h-square}
\\
\|u\|_{H^1_{A_h}}^2 
&=& 
\int_{I_\delta} | h \cdot u'(x)|^2 \,L(x)\, dx~
+~
\int_{I_\delta} 
\left(
\frac{1}{L(x)^2}+1
\right) 
\cdot 
|u(x)|^2\,L(x)\, 
dx.
\label{eqn:A_h-square}
\end{eqnarray*}

and we obtain (\ref{eqn:H1norms-equiv}) using (\ref{eqn:norms-equiv}) and
the fact that the functions $V$ and $\frac{1}{L^2}$ coincide on $I_\delta$.

To prove estimate (\ref{eqn:quasi-equiv}), we use the definitions of the $H^{-1}$ norms.
In particular, if we let 
\[
I(\phi)\,
:=\,
h^2 \int_\R \bar{u}'(x)\, \phi'(x)\,dx\,
+\,
\int_\R (V(x)-E)\, \bar{u}(x)\, \phi(x)\,dx.
\]
then  $\|(P_h - E)u\|_{H^{-1}_{P_h}}$ equals the supremum of $|I(\phi)|/\|\phi\|_{H^1_{P_h}}$ 
over  $\phi \in \Cinf_0(\R)$.
We now proceed to compute $I(\phi)$.
Because $u$ has support in $I_\delta$, we can insert a cutoff function 
$\chi$ that is $1$ on $I_\delta$ and $0$ outside $I_{\delta/2}$.
Since $L>0$ on $I_{\delta/2}$, there exists $\psi \in \Cinf_0(\R)$ 
such that $L \cdot \psi=\phi \cdot \chi$. 
We have
\begin{eqnarray*}
I(\phi) 
&=&
h^2 \int_\R \bar{u}'(x) \, (\chi \cdot \phi)'(x)\,dx\,
+\,
\int_\R (V(x)-E)\, \bar{u}(x)\, (\chi \cdot \phi)(x)\,dx
\\
&=&
h^2 \int_\R\bar{u}'(x) \, (\psi \cdot L)'(x)\,dx\,
+\,
\int_\R (V(x)-E)\, \bar{u}(x)\, (\psi \cdot L)(x)\,dx
\\
&=& 
h^2 \int_\R \bar{u}'(x) \, \psi'(x)\,L(x) \,
dx\,
+\,
\int_\R 
\left(
\frac{1}{L(x)^2} - E
\right)\,
\bar{u}(x)\,
\psi(x)\,
L(x)\,
dx\,
+\,
h^2 \int \bar{u}'(x)\,
\psi(x)\,
L'(x)\,
dx.
\\
&=& 
\left\langle (A_h -E)u, \psi \right\rangle_{\Hcal}~
+~
h^2 \int \bar{u}'(x)\,
\psi(x)\,
L'(x)\,
dx.
\end{eqnarray*}
Using the definition of the $H^{-1}_{A_h}$ norm and the fact that $L'$ is bounded on $I_\delta$,  we obtain
\[
  |I(\phi)|~
  \leq~
  \|(A_h-E)u\|_{H^{-1}_{A_h}}
  \cdot 
  \|\psi\|_{H^1_{A_h}}\,+\, c \cdot h \cdot \|hu'\|_{L^2(\R)}
  \cdot 
  \|\psi\|_{L^2(\R)}.
\]
Since $\psi$ has support in $I_{\delta/2}$ we have 
the estimate $\|\psi\|_{H^1_{A_h}} \,\leq\,C\, \| \psi\|_{H^1_{P_h}}$.\\
Since  $\psi\,=\,\frac{\chi \cdot \phi}{L}$ it is straightforward to prove that
\[
  \| \psi\|_{H^1_{P_h}}\,\leq\,C\, \| \phi\|_{H^1_{P_h}}.
\]
Finally, we obtain
\[
  |I(\phi)|\,
  \leq\, 
  \left( C \cdot \|(A-E)u\|_{H^{-1}_{A_h}}\,
  +\,
  c\, h\,  \|h \cdot u'\|_{L^2(\R)}\right)\| \phi\|_{H^1_{P_h}}. 
\]
This yields (\ref{eqn:quasi-equiv}).
\end{proof}

\newpage


\section{Summing non-concentration estimates}
\label{sec:sum}

Let $L$ satisfy conditions L1 and L2 of the previous section and as before 
let $\Hcal = L^2 \left(\op -1, 1 \cl, L(x)\,dx \right)$.
For each $k \in \N^*$, $h \in \op 0, \infty \cl$, and $v \in \Cinfz(\op -1,1\cl)$ define 
$$
A_{k,h} v~
=~
-h^2\,
  \frac{1}{L} \left(L v' \right)'~
  +~
  \frac{k^2\pi^2}{L^2(x)}\, v.
$$
We will let $A_{k,h}: D_{k,h} \to \Hcal$ denote the Friedrichs extension.
Note that the domain $D_{k,h}$ does not depend on $k$ or $h$ and hence we will simply write $D$. The analysis of section \ref{sec:interval} applies to each of the operators $A_{k,h}$. We will consider a `sum' of the $A_{k,h}$ over $k \in \N^*$.

Define the Hilbert space $\widetilde{\Hcal}\, :=\,\ell^2(\N^* ; \Hcal)$.
An element in $\widetilde{\Hcal}$ can be uniquely written as
\[
  u\,=\,\sum_{k\geq 1} u_k\otimes e_k
\]
where $(e_k)_{k\geq 1}$ is the canonical Hilbert basis of $\ell^2(\N^*)$.
Note that $\| u\|^2_{\tHc}\,=\,\sum_k \|u_k\|^2_{\Hcal}$,

Let $\widetilde{\Dcal}_f$ denote the subspace of $\widetilde{\Hcal}$
consisting of $u$ such that each $u_k \in D$ and only finitely many of the $u_k$ are nonzero.
(Here the subscript $f$ refers to the finiteness of each sum.)
Define the operator $\Acal_{f,h}: \widetilde{\Dcal}_f \to \widetilde{\Hcal}$ 
by
\begin{eqnarray*}
  \Acal_{f,h} u
  &=& 
  \sum_{k \geq 1} A_{k,h} u\, \otimes e_k
  \\
  &=&
  \sum_{k \geq 1} 
  \left(
  -h^2\,
  \frac{1}{L} \left(Lu_k' \right)'\,
  +\,
  \frac{k^2\pi^2}{L^2(x)}u_k
  \right)
  \otimes e_k
\end{eqnarray*}
The quadratic form $a_h$ associated with $\Acal_{f,h}$ is given by\footnote{A similar quadratic form was introduced in \S 11 of \cite{HJCMP}
as part of the so-called method of asymptotic separation of variables.
Here we include the otherwise unimportant factor of $\pi^2$ 
in the definition of $a_h$ in order to be consistent with the 
approach in \cite{HJCMP}.}
\begin{eqnarray}
\label{eqn:a-defn}
a_h(u)\,
&=&
\sum_{k\geq 1}\, h^2 \int_{-1}^1 |u_k'(x)|^2\,L(x)\,dx\,
+\,
\int_{-1}^1 \frac{k^2\pi^2}{L^2(x)}\, |u_k(x)|^2 \,L(x)\, dx
\\
&=&
\sum_{k \geq 1} a_{k,h}(u_k)
\nonumber
\end{eqnarray}

The following theorem summarizes the spectral theory of $\Acal_{f,h}$. 
Its proof is left to the reader.
Let $\widetilde{\Dcal}$ consist of $u \in \widetilde{\Hcal}$ such that
$u_k \in D$ for each $k$ and  $ \sum_k \|A_{k,h} u_k\|^2 \,<\,+\infty$.

\begin{thm}
The operator $\Acal_h: \widetilde{\Dcal} \to \widetilde{\Hcal}$ defined by 
\[
\Acal_h \left(\sum_{k\geq 1} u_k\otimes e_k\right)\,=\,\sum_{k\geq 1} A_{k,h}u_k\otimes e_k
\]
is the Friedrichs extension of $\Acal_{f,h}$. It has compact resolvent. 
Its spectrum is given by 
\[
\spec \Acal_h \,=\,\bigcup_{k\geq 1} \spec A_{k,h}.
\]
More precisely, if, for any $k$, $(\psi_{k,\ell})_{\ell\geq 0}$ is a Hilbert basis of $\Hcal$ consisting in 
eigenvectors for $A_{k,h}$ then $(\psi_{k,\ell}\otimes e_k)_{k,\ell \geq 0}$ is a Hilbert basis of eigenvectors of $\Acal_h$.
\end{thm}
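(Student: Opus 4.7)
The plan is to realize $\Acal_h$ as the orthogonal direct sum $\bigoplus_{k\geq 1} A_{k,h}$ of the self-adjoint operators from the previous section, acting on $\tHc \cong \bigoplus_{k\geq 1}\Hcal$, and then to invoke standard direct-sum theory together with a quadratic-form identification. For each $k$, the analysis of \S\ref{sec:interval} applies to $A_{k,h}$ (with $L^{-2}$ replaced by $k^2\pi^2 L^{-2}$) and yields an orthonormal basis $(\psi_{k,\ell})_{\ell \geq 0}$ of $\Hcal$ consisting of eigenfunctions with eigenvalues $\lambda_{k,\ell}$ accumulating only at $+\infty$. The family $(\psi_{k,\ell}\otimes e_k)_{k,\ell}$ is then an orthonormal basis of $\tHc$, and the operator defined on $\widetilde{\Dcal}$ by $\Acal_h(\psi_{k,\ell}\otimes e_k) = \lambda_{k,\ell}\,\psi_{k,\ell}\otimes e_k$ (extended by the spectral theorem) is self-adjoint on $\widetilde{\Dcal}$ and agrees with the formula in the statement, since on each slot $u_k \otimes e_k \in \dom A_{k,h} \otimes \{e_k\}$ it reproduces $A_{k,h} u_k$.

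Next, to establish compact resolvent and the stated spectral identity, I would use the uniform form lower bound
\[
a_{k,h}(u_k)\,\geq\,\int_{-1}^1 \frac{k^2\pi^2}{L(x)^2}\,|u_k(x)|^2\, L(x)\,dx\,\geq\,\frac{k^2\pi^2}{\|L\|_\infty^2}\,\|u_k\|_\Hcal^2,
\]
which gives $\min \spec A_{k,h} \geq k^2\pi^2/\|L\|_\infty^2$. Consequently $\bigcup_k \spec A_{k,h}$ is itself a discrete subset of $\R$ accumulating only at $+\infty$, and $(\Acal_h+1)^{-1}$ is the orthogonal sum of the compact operators $(A_{k,h}+1)^{-1}$, whose operator norms decay like $k^{-2}$. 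Truncation at large $k$ expresses $(\Acal_h+1)^{-1}$ as a norm-limit of finite direct sums of compact operators, hence it is itself compact, and the identity $\spec \Acal_h = \bigcup_k \spec A_{k,h}$ follows.

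Finally, to identify $\Acal_h$ with the Friedrichs extension of $\Acal_{f,h}$, it suffices to show that the closure of the symmetric, non-negative form $a_h$ from \eqref{eqn:a-defn} on $\widetilde{\Dcal}_f$ is the form of the self-adjoint operator constructed above, namely
\[
\widetilde{a}_h(u)\,=\,\sum_{k\geq 1} \bar{a}_{k,h}(u_k), \qquad \Dcal(\widetilde{a}_h)\,=\,\Bigl\{u \in \tHc \colon u_k \in \Dcal(\bar{a}_{k,h}),\ \sum_k \bar{a}_{k,h}(u_k) < \infty\Bigr\}.
\]
Given $u \in \Dcal(\widetilde{a}_h)$, the truncations $u^{(N)} = \sum_{k\leq N} u_k\otimes e_k$ converge to $u$ in the $\widetilde{a}_h$-norm (the tail is controlled by the uniform spectral lower bound above), and for each fixed $N$ the density of $\Cinfz(\op -1, 1\cl)$ in each $\Dcal(\bar{a}_{k,h})$, combined with a finite diagonal approximation, produces an approximant of $u^{(N)}$ in $\widetilde{\Dcal}_f$. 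Uniqueness of the self-adjoint operator associated to a closed semi-bounded form then yields the Friedrichs identification. The main technical step, mild but essential, is precisely this tail control in $k$, which is where the lower bound $\min \spec A_{k,h} \to \infty$ is used a second time.
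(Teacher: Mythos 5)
The paper leaves the proof to the reader, so there is no reference proof to compare against; the question is simply whether your argument is sound. It is. Realizing $\Acal_h$ as the direct sum $\bigoplus_k A_{k,h}$ of the one-dimensional Friedrichs extensions, and using the elementary bound
\[
a_{k,h}(u_k)\,\geq\,\frac{k^2\pi^2}{\|L\|_\infty^2}\,\|u_k\|_\Hcal^2
\]
to get $\min\spec A_{k,h}\to\infty$, is exactly the natural route. That lower bound does double duty: it gives $\|(A_{k,h}+1)^{-1}\| = O(k^{-2})$, so $(\Acal_h+1)^{-1}$ is a norm limit of finite orthogonal sums of compact operators and hence compact, and it controls the tails in the form norm so that $\widetilde{\Dcal}_f$ is dense in the form domain of $\widetilde{a}_h$, which pins down the Friedrichs identification. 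The spectral identity $\spec\Acal_h=\bigcup_k\spec A_{k,h}$ then follows because this union is already closed (discrete with only $+\infty$ as accumulation point), avoiding the closure that a general direct sum would require. One small thing worth making explicit when you write this up: the compactness of each individual $(A_{k,h}+1)^{-1}$ is inherited from \S\ref{sec:interval}, and the density of $\Cinfz(\op -1,1\cl)$ in each $\Dcal(\bar a_{k,h})$ is by definition of the Friedrichs extension of $A_{k,h}$; both are used implicitly and deserve a one-line citation.
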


The $H^1_{\Acal_h}$ norm is easily computed: 
\[
\| u\|^2_{H^1_{\Acal_h}}\,
=\,
\sum_{k\geq 1}\, \|u_k\|^2_{H^1_{A_{k,h}}},
\]
 and $u$ belongs to $H^1_{\Acal_h}$ if and only if the sum on the right hand side
is finite. 
By duality
\[
\| (\Acal_h -E) u\|^2_{H^{-1}_{\Acal_h}} \,=\,\sum_{k\geq 1} \|(A_{k,h}-E)u_k\|^2_{H^{-1}_{A_{k,h}}}.
\]

The analysis in \S \ref{sec:interval} leads to the following. 

\begin{prop}
\label{prop:ncsum}
Let $k_0 \in \N^*$ and suppose that 
\[
[a,b]~
\subset~
\biggr] 
\left(
\frac{k_0\,\pi}{L(0)}
\right)^2,\,
\left(
\frac{(k_0+1)\,\pi}{L(0)}
\right)^2
\biggr[.
\]
There exists $\delta_0\in \op 0,1 \cl$ such that, for any $\eps>0$, 
for any $\delta\in \op 0,\delta_0 \cl$, there exist
constants $C$ and $h_0$ such that, 
for any $h\in \op 0,h_0 \cl$, for any $E\in[a,b]$ and any 
$u\in H^1_{\Acal_h}\cap\, \ell^2(\N^*,\Cinf_0(I_\delta))$ 
the following estimate holds:
\[
\|u\|_{H^1_{\Acal_h}}\,
\leq\,
\frac{\eps}{h}\,
\| (\Acal_h -E)u\|_{H^{-1}_{\Acal_h}}\,+\,C\, \|h \cdot D' u\|_{\tHc},
\]
where $D'$ is the operator defined by $D'(\sum u_k\otimes e_k)\,=\,\sum u'_k\otimes e_k$. 
\end{prop}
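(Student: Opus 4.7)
The hypothesis on $[a,b]$ splits the Fourier index set $\N^*$ into \emph{low modes} $k \leq k_0$, for which the minimum $k^2\pi^2/L(0)^2$ of the potential of $A_{k,h}$ lies strictly below $[a,b]$, and \emph{high modes} $k \geq k_0 + 1$, for which this minimum lies strictly above $[a,b]$ with a uniform gap $\gamma := ((k_0+1)\pi/L(0))^2 - b > 0$. My plan is to decompose $u = u^{<} + u^{>}$ along this partition, handle the two parts by completely different arguments, and recombine. The $H^{\pm 1}_{\Acal_h}$ and $\tHc$ norms being $\ell^2$-direct sums and $\Acal_h - E$ acting block-diagonally on the decomposition, we have $\|(\Acal_h - E)u^{\pm}\|_{H^{-1}_{\Acal_h}} \leq \|(\Acal_h - E)u\|_{H^{-1}_{\Acal_h}}$ and $\|h D' u^{\pm}\|_{\tHc} \leq \|h D' u\|_{\tHc}$, so it suffices to produce the estimate with $u^{<}$ and $u^{>}$ on the left-hand side separately.

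For the low modes I will rerun the proof of Proposition \ref{prop:ncinterval} with $1/L^2$ replaced by the single-well potential $k^2\pi^2/L^2$: Proposition \ref{prop:ncline} applies verbatim to any positive potential satisfying V1 and V2, and the comparison of Lemma \ref{lem:equivAP} carries over unchanged, only with constants depending on $k$ but finite for each fixed $k$. The hypothesis $E > k^2\pi^2/L(0)^2$ needed in the resulting variant is precisely the defining property of low modes. This yields, for each $k \in \{1,\ldots,k_0\}$,
\[
\|u_k\|_{H^1_{A_{k,h}}} \leq \frac{\eps}{h} \|(A_{k,h}-E) u_k\|_{H^{-1}_{A_{k,h}}} + C_k \|h u_k'\|_{\Hcal}.
\]
Squaring, summing the $k_0$ inequalities via $(a+b)^2 \leq 2a^2 + 2b^2$, and taking a common $\delta_0$ and $h_0$ over the finitely many $k$ gives the estimate with $u^{<}$ on the left and constants depending only on $k_0$, $[a,b]$, and $L$.

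The main obstacle is the high-mode estimate, since one must now control infinitely many $k$ uniformly; here I will abandon semiclassical analysis entirely and argue by spectral coercivity. For any $k \geq k_0 + 1$, every eigenvalue $\lambda$ of $A_{k,h}$ satisfies $\lambda \geq k^2\pi^2/L(0)^2 \geq E + \gamma$, so
\[
\lambda + 1 \,=\, (\lambda - E) + (E + 1) \,\leq\, \left(1 + \frac{b+1}{\gamma}\right)(\lambda - E).
\]
Inserting this termwise into the spectral formulas for the $H^{\pm 1}_{A_{k,h}}$ norms immediately yields $\|u_k\|_{H^1_{A_{k,h}}} \leq C_0 \|(A_{k,h}-E)u_k\|_{H^{-1}_{A_{k,h}}}$ with $C_0 := 1 + (b+1)/\gamma$ independent of $k \geq k_0+1$, of $h$, and of $u$. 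Summing the squares over $k > k_0$ gives $\|u^{>}\|_{H^1_{\Acal_h}} \leq C_0 \|(\Acal_h - E) u\|_{H^{-1}_{\Acal_h}}$. Imposing $h \leq \eps/C_0$ absorbs this into the $\eps/h$ prefactor on the right-hand side; combining with the low-mode estimate and adjusting constants produces the stated inequality.
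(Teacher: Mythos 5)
Your proposal is correct and takes essentially the same approach as the paper: split the Fourier modes at $k_0$, apply the non-concentration estimate of Proposition~\ref{prop:ncinterval} to the finitely many low modes, and use the spectral gap $T_{k_0+1} - b > 0$ to get a uniform coercivity constant $c_+ = 1 + (b+1)/(T_{k_0+1}-b)$ for the high modes, then absorb by choosing $h_0$ small relative to $\eps$. The one place you are slightly more careful than the paper's terse exposition is in noting explicitly that Proposition~\ref{prop:ncinterval} must be rerun with the potential $k^2\pi^2/L^2$ in place of $1/L^2$ (and that the requisite hypothesis $E > k^2\pi^2/L(0)^2$ is exactly the low-mode condition), but this is a detail the paper implicitly assumes rather than a different route.
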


\begin{proof}
Write $u= \sum_{k \geq 1} u_k\otimes e_k$.\\
For $k \in \N^*$, let $T_k$ denote the `threshold' $(k\pi/L(0))^2$.\\
Note that $a_{k,h}(v)\, \geq\, T_k\, \|v\|_{\Hcal}$ 
and hence $\spec A_{k,h} \subset [T_k,+\infty \cl$ for each $k$ and $h$.\\
In particular, if $k \geq k_0+1$ and $\lambda \in \spec A_{k,h}$, then by hypothesis
$$
\lambda - E~ \geq~ T_{k_0+1} -b\,  >\, 0.
$$
Thus, by using the analogue of equation (\ref{eqn:H-minus-1}), we find that
if $k \geq k_0+1$, then 
\[
\| u_k\|_{H^1_{A_k}}~
\leq~
c_+\,
\| (A_{k,h} -E)u\|_{H^{-1}_{A_k}},
\]
where 
\[
  c_+\,
  =\,
  \sup 
  \left\{ 
  \frac{|\lambda +1|}{|\lambda -E|}\, 
  :\,
  \lambda \geq T_{k_0+1} \mbox{ and } E \in [a,b]
  \right\}
  =\,
  1\, +\, \frac{b+1}{T_{k_0+1} -b}.
\]

For $k\leq k_0$, we may use the estimate in Proposition \ref{prop:ncinterval}:
\[
 \| u_k\|_{H^1_{A_k}}\,
 \leq\,
 \frac{\eps}{h}
\,
\| (A_{k,h} -E)u \|_{H^{-1}_{A_k}} \,
+\,
C\,
\|h \cdot u'_k\|_{\Hcal}.
\]
By squaring and summing all of these estimates, we obtain
\[
  \| u\|_{H^1_{\Acal_h}}^2\,
  \leq\,
  \left(\max
  \left\{\frac{2\eps}{h},\, 
  c_+
  \right\}\right)^2
  \cdot \| (\Acal-E)u\|_{H^{-1}_{\Acal_h}}^2\,
  +\,
  C\,
  \|h \cdot D'u\|_{\tHc}^2.
\]
We choose $h_0<\frac{2\eps}{c_+}$, and the claim follows.
\end{proof}

\newpage


\section{Compressing half-ovals with Dirichlet and mixed conditions}
\label{sec:shrink}

We apply the preceding estimates to study the analytic eigenvalue 
branches of the family $q_h$ of quadratic forms 
defined on certain subspaces of $H^1(\Omega)$ where
\begin{equation}
\label{eqn:half-oval}
\Omega \,=\,\left \{ (x,y)\in \R^2,~~|x|\,<\,1,~0\,<\,y\,<\,L(x)\right \}
\end{equation}
and  $L$ satifies the conditions L1 and L2 of section \ref{sec:interval}. 
For each $u\in H^1(\Omega)$, we define
\begin{equation}
\label{eqn:quad}
q_h(u)\,
:=\,
h^2 \int_{\Omega}\, |\partial_x u(x,y)|^2\,dx\, dy\,
+\,\int_{\Omega}\, |\partial_y u(x,y)|^2\,dx\, dy.
\end{equation}

We consider individually the restriction of $q_h$ to three 
distinct subspaces of $H^1(\Omega)$, 
and thus obtain three distinct spectral problems: 

\begin{itemize}

\item  {\it Full Dirichlet:} Restrict  $q_h$ to $H^1_0(\Omega)$.

\item {\it Dirichlet on curved part:} 
Restrict $q_h$ to the subspace $H^1_{0c}(\Omega)$  consisting of $u \in H^1(\Omega)$  whose trace vanishes on the graph of $L$. In this case,
eigenfunctions satisfy Dirichlet conditions on the graph of $L$ and 
Neumann conditions on the $x$-axis.

\item {\it Dirichlet on straight part:} We restrict $q_h$ 
to the subspace $H^1_{0s}(\Omega)$  consisting of $u \in H^1(\Omega)$
     that vanish on the $x$-axis.  In this case,
eigenfunctions satisfy Neumann conditions on the graph of $L$ and 
Dirichlet conditions on the $x$-axis.

\end{itemize}
Here we study the spectrum of $q_h$ relative to $\|\cdot\|^2$ where $\|\cdot \|$ 
denotes the standard norm 
on $L^2(\Omega, dx\, dy)$.
Recall that $u$ is an eigenfunction of $q_h$ relative to $\|u\|^2$ 
with eigenvalue $E$ if and only if $u$ belongs to the chosen subspace of $H^1(\Omega)$ and $q_h(u,v )= E\, \langle u, v\rangle$
for each $v$ belonging to the same subspace.

\begin{remk}
\label{remk:stretch}
The quadratic form $q_h$ arises from deforming the domain $\Omega$.
Indeed, suppose that $\Omega_h$ is the image of $\Omega$ under the map 
$(x,y) \mapsto (x/h, y)$. By making the change variables $\bar{x}=x/h$ we have 
\begin{equation*}
\label{eqn:stretch}
\frac{1}{h}\,
q_h(u)~
=~
\int_{\Omega_h}\, 
|\partial_{\bar{x}} u(\bar{x},y)|^2\, d\bar{x}\, dy\,
+\, 
\int_{\Omega_h}\, 
|\partial_{y} u(\bar{x},y)|^2\, d\bar{x}\, dy.
\end{equation*}
Note that the map $(x,y) \mapsto (x/h, y)$
`stretches' $\Omega$ in the horizontal direction by a factor of $1/h$. 

On the other hand, the map $(x,y) \mapsto (x, h\, y)$ compresses
the domain $\Omega$ in the vertical direction. If $\Omega_h$ is the image of 
this map and we set $\bar{y} = h \cdot y$, then we find that 
\begin{equation*}
\label{eqn:compress}
\frac{1}{h}\,
q_h(u)~
=~
\int_{\Omega_h}\, 
|\partial_{x} u(x,\bar{y})|^2\, dx\, d\bar{y}\,
+\, 
\int_{\Omega_h}\, 
|\partial_{\bar{y}} u(x,\bar{y})|^2\, dx\, d\bar{y}.
\end{equation*}
In either case, the study of the spectrum of Dirichlet and mixed
Laplace operators on $\Omega_h$
may be reduced to the study of the spectrum of $q_h$ restricted 
to the appropriate subpace of $H^1(\Omega)$ and relative to $\|\cdot\|^2$. 
Observe that the latter norm also is also simply related to the $L^2$ norm in $\Omega_h$ 
\end{remk}

Kato-Rellich analytic perturbation theory \cite{Kato} 
applies to the family $q_h$, and so in each of the three cases above, 
the spectrum of $q_h$ relative
to $\|\cdot\|^2$ may be organized into analytic eigenbranches $(E_h,u_h)$.
The derivative of the eigenvalue branch $E_h$ 
is given by the Feynman-Hellmann formula: 
\begin{equation}
\label{eqn-Feynman-Hellmann}
\frac{dE_h}{dh}\,
=\,
\frac{(\partial_h q_h)(u_h)}{\|u_h\|^2}~
=~
\frac{2 h}{\|u_h\|^2} \int_{\Omega} |\partial_x  u_h(x,y)|^2\, dx\, dy~
=~
\frac{1}{h}\, \frac{\| h \cdot \partial_x  u_h(x,y)\|^2}{\|u_h\|^2}.
\end{equation}
In particular, $\partial_h E_h \geq 0$ and so
each eigenvalue branch $E_h$ is an increasing function of $h$. 
Since $E_h \geq 0$ for all $h$, we deduce that $E_h$ tends to 
a limit as $h$ tends to $0$. 

The following theorem restricts the possible limits 
of eigenvalue branches in the full Dirichlet case.
After the proof we will state the analogous result for the 
mixed cases and we will discuss how to modify the proof.

\begin{thm}[Full Dirichlet]
\label{thm:limits}
For any analytic eigenvalue branch $E_h$ of $q_h$ on $H^1_0(\Omega)$ relative to 
$\|\cdot\|^2$, there exists $k\in \N^*$ such that 
\[
\lim_{h \to 0}\,  E_h~ =~ \left( \frac{k\, \pi}{L(0)} \right)^2.
\]
\end{thm}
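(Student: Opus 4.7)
The plan is to argue by contradiction and force a violation of Proposition \ref{prop:ncsum} via the Feynman-Hellmann identity. Write $T_k := (k\pi/L(0))^2$. By Poincaré applied fiberwise to the Dirichlet problem in $y$ (using $L(x) \leq L(0)$), one has $q_h(u) \geq T_1 \cdot \|u\|^2$ on $H^1_0(\Omega)$; since $h \mapsto E_h$ is increasing (by (\ref{eqn-Feynman-Hellmann})) and bounded above on any compact $h$-interval, the limit $E_0 := \lim_{h \to 0} E_h$ exists and satisfies $E_0 \geq T_1$. If $E_0 \in \{T_k\}_{k \geq 1}$ we are done; otherwise there is $k_0 \geq 1$ with $E_0 \in \op T_{k_0}, T_{k_0+1} \cl$, and I would fix a compact $[a,b] \subset \op T_{k_0}, T_{k_0+1} \cl$ containing $E_0$, so that $E_{h} \in [a,b]$ for $h$ small enough.

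The elementary fact that any increasing bounded function on $\op 0, 1\cl$ satisfies $\liminf_{h \to 0} h \cdot f'(h) = 0$ (since otherwise $\int f'(h)\, dh \geq c \int dh/h$ diverges), applied to $f = E_h$ and combined with (\ref{eqn-Feynman-Hellmann}), produces a sequence $h_n \to 0$ along which
\[
\frac{\|h_n \cdot \partial_x u_{h_n}\|^2}{\|u_{h_n}\|^2}~\longrightarrow~0.
\]
To use Proposition \ref{prop:ncsum}, I would transport $u_h$ to $\tHc$ via the $y$-Fourier expansion
$u_h(x,y) = \sum_{k \geq 1} c_{k,h}(x)\,\sqrt{2/L(x)}\,\sin(k\pi y/L(x))$, producing an element $(c_{k,h})_k \in \tHc$. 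The section that immediately follows the statement (\S \ref{sec:shrink}) shows that $q_h$ on $H^1_0(\Omega)$ is \emph{asymptotic} to the form $a_h$ of (\ref{eqn:a-defn}) in the sense that the difference is $o(1)\cdot(q_h(u_h)+\|u_h\|^2)$. This identifies $(c_{k,h})_k$ as an $O(h)$ quasimode of $\Acal_h$ at energy $E_h \in [a,b]$.

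The genuine obstacle, and the new ingredient flagged in the introduction, is that Proposition \ref{prop:ncsum} demands the quasimode be supported in $I_\delta = [-1+\delta, 1-\delta]$, whereas $u_h$ a priori extends to $x = \pm 1$ where $L$ vanishes (to order $1/2$ in the ellipse case) and $L'$ is unbounded. I would introduce a smooth $x$-cutoff $\chi_\delta$ equal to $1$ on $I_{2\delta}$ and vanishing outside $I_\delta$, and control both $\|(1-\chi_\delta) u_h\|$ and the commutator $[\chi_\delta, q_h]$ by an Agmon-type estimate: on $\{|x|>1-\delta\}$ the fiberwise potential $(k\pi/L(x))^2$ exceeds $b$ and blows up at the endpoints, which should force exponential decay of $u_h$ there, and crucially with a rate that is uniform in $h$. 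This uniform-in-$h$ control at a singular endpoint where $L'$ is infinite is the hard part, and is precisely what distinguishes the ellipse setting from the triangular setting of \cite{HJCMP}.

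Once the cutoff is justified, applying Proposition \ref{prop:ncsum} to $\chi_\delta \cdot (c_{k,h_n})_k$ yields
\[
\|\chi_\delta u_{h_n}\|_{H^1_{\Acal_{h_n}}}~\leq~\frac{\eps}{h_n}\,\|(\Acal_{h_n}-E_{h_n})(\chi_\delta (c_{k,h_n}))\|_{H^{-1}_{\Acal_{h_n}}}~+~C\,\|h_n \cdot D'(\chi_\delta (c_{k,h_n}))\|_{\tHc}.
\]
The first term on the right is $o(\|u_{h_n}\|)$ because $u_{h_n}$ is a genuine eigenfunction of $q_h$ and $q_h \sim a_h$, while the second is $o(\|u_{h_n}\|)$ by the subsequence selection above (the commutator $\chi_\delta' c_{k,h}$ being handled by the same Agmon decay). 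Since the left-hand side dominates $\|u_{h_n}\|$ (the $L^2$ mass is preserved up to the Agmon error), this contradiction forces $E_0 \in \{T_k\}_{k \geq 1}$, which is the desired conclusion.
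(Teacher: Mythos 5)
Your overall architecture matches the paper's: argue by contradiction, localize $E_h$ in an interval $[a,b]$ free of thresholds, use Feynman--Hellmann to extract a sequence along which $\|h\,\partial_x u_h\|=o(\|u_h\|)$, cut off in $x$, transport to the operator $\Acal_h$ via the Fourier-in-$y$ decomposition, apply Proposition~\ref{prop:ncsum}, and close the loop. The subsequence extraction (your $\liminf h\,f'(h)=0$ observation) is an acceptable substitute for the paper's integrability-of-$\partial_h E_h$ phrasing.

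The genuine gap is the step you yourself flag as ``the hard part'': you propose to control $\|(1-\chi_\delta)u_h\|$ and the commutator by an Agmon-type exponential decay estimate, uniformly in $h$, near the singular endpoints where $L'$ blows up. You do not prove this, and it is not what the paper does; more importantly, it is not \emph{needed}. The paper's Lemma~\ref{lem:Poinc} does the job by an elementary fiberwise Poincar\'e inequality: for $u\in H^1_0(\Omega)$ (so $u$ vanishes at $y=0$ and $y=L(x)$), one has
\[
\int_\Omega \un_{J_\delta}(x)\,|u|^2\,dx\,dy~\leq~\sup_{x\in J_\delta}\Bigl(\tfrac{L(x)}{\pi}\Bigr)^2\,\int_\Omega \un_{J_\delta}(x)\,|\partial_y u|^2\,dx\,dy,
\]
and since $L\to 0$ at $\pm 1$ the prefactor $f(\delta)\to 0$. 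Combined with the eigenvalue bound $\|\partial_y u_h\|^2\leq b\,\|u_h\|^2$, this shows the mass of $u_h$ on $J_\delta$ is at most (say) $\tfrac14\|u_h\|^2$ for $\delta$ small, \emph{uniformly in $h$}, with no decay rate and no Agmon machinery. This is precisely the ``new ingredient'' the introduction refers to. Likewise, the commutator and the $D'$-term do not need decay: the paper bounds $\|(Q_h-E)(\chi u_h)\|\leq C\,h\,\|u_h\|$ by computing $[Q_h,\chi]u_h=-h^2(2\chi'\partial_x u_h+\chi''u_h)$ directly (the derivatives of $\chi$ and $L'/L$ are bounded on $\supp\chi\subset I_{\delta/2}$), and uses the relation $D'w_h=\chi'u_h+\chi\,\partial_x u_h-\chi(L'/L)\partial_y u_h$ together with the already-selected $\|h\,\partial_x u_h\|=o(\|u_h\|)$. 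So your sketch, as written, replaces a clean one-line Poincar\'e argument with an unproved and substantially harder decay estimate, and that gap needs to be filled before the proof is complete.
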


We will use two lemmas  to prove Theorem \ref{thm:limits}.

The first lemma is a corollary of the Poincaré inequality on the segment $[0,L(x)]$.
For $\delta \in \op 0,1 \cl$ define 
$J_{\delta}:= \op -1, 1 \cl \setminus I_{\delta}$ 
where $I_{\delta}$ is defined as in \S \ref{sec:interval}.

\begin{lem}[Poincar\'e estimate]
\label{lem:Poinc}
There exists $f: \op 0,1 \cl \to \R^*$ with $\lim_{\delta \to 0} f(\delta)=0$
so that if $\delta\in  \op 0,1 \cl$ and $u\in H^1_0(\Omega)$, then  
\[
\int_{\Omega} \un_{J_{\delta}}(x) \cdot 
|u(x,y)|^2\, 
dx\, dy~
\leq~ 
f(\delta)
\int_{\Omega}\un_{J_\delta}(x)
\cdot |\partial_y u(x,y)|^2\,
dx\,
dy.
\]
\end{lem}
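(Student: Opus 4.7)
The plan is to reduce to the one-dimensional Poincar\'e inequality on each vertical slice $\{x\} \times \op 0, L(x) \cl$. Since $u \in H^1_0(\Omega)$, its trace on $\partial \Omega$ vanishes; in particular, for almost every $x \in \op -1, 1 \cl$ we have $u(x,0) = 0$, and by Fubini the partial function $y \mapsto u(x,y)$ belongs to $H^1(\op 0, L(x) \cl)$ with vanishing trace at $y=0$. Writing $u(x,y) = \int_0^y \partial_y u(x,s)\, ds$ and applying Cauchy--Schwarz yields
\[
|u(x,y)|^2~\leq~ y \int_0^{L(x)} |\partial_y u(x,s)|^2\, ds~\leq~L(x)\, \int_0^{L(x)} |\partial_y u(x,s)|^2\, ds
\]
for almost every $(x,y) \in \Omega$.

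Integrating this inequality first over $y \in \op 0, L(x) \cl$ and then over $x \in J_\delta$ gives, by Fubini,
\[
\int_{\Omega} \un_{J_\delta}(x)\, |u(x,y)|^2\, dx\, dy~\leq~\Big(\sup_{x \in J_\delta} L(x)^2\Big) \int_{\Omega} \un_{J_\delta}(x)\, |\partial_y u(x,y)|^2\, dx\, dy.
\]
Setting $f(\delta) := \sup_{x \in J_\delta} L(x)^2$ and invoking L2 (namely, $L$ extends continuously to $[-1,1]$ with $L(\pm 1)=0$), we deduce that $f(\delta) \to 0$ as $\delta \to 0$, which yields the stated estimate.

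There is essentially no difficult step in this argument: the only minor subtlety is justifying the slice-wise trace $u(x,0)=0$ and the slice-wise representation formula for a general $u \in H^1_0(\Omega)$ rather than a $\Cinf_0(\Omega)$ function. This is handled by the density of $\Cinf_0(\Omega)$ in $H^1_0(\Omega)$ together with the continuity of both sides of the inequality with respect to the $H^1$ norm, so the pointwise-smooth inequality passes to the closure. The real content of the lemma is simply that the Poincar\'e constant on the slice $\{x\}\times \op 0,L(x)\cl$ is controlled by $L(x)^2$ and that $L$ vanishes at the endpoints, so one obtains decay of the constant as the slab $J_\delta$ shrinks toward $\{\pm 1\}$. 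This is exactly what will later be needed to complement the interior non-concentration estimate of Proposition~\ref{prop:ncsum} near the singular points $x=\pm 1/h$.
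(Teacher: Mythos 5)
Your proof is correct, but it takes a different route from the paper. The paper's proof performs the Fourier sine decomposition $u(x,y)=\sum_{k\geq 1} u_k(x)\sqrt{2}\sin(k\pi y/L(x))$ and compares the resulting series term by term, which yields the sharp Poincar\'e constant $f(\delta)=\sup_{x\in J_\delta}(L(x)/\pi)^2$. You instead use the representation $u(x,y)=\int_0^y \partial_y u(x,s)\,ds$ (the vanishing of the trace at $y=0$) plus Cauchy--Schwarz, arriving at the cruder but equally serviceable constant $\sup_{x\in J_\delta} L(x)^2$; for the lemma only $f(\delta)\to 0$ matters, so both work. The paper's choice is not accidental: the sine decomposition $u=\sum_k u_k\otimes e_k$ is the same identification of $L^2(\Omega)$ with $\ell^2(\N^*,\Hcal)$ that drives Lemma~\ref{lem:asysep} and the operator $\Acal_h$, so it keeps the whole section in a single framework, whereas your argument is the standard textbook Poincar\'e slice argument and is self-contained. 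You could also sharpen your own bound to $L(x)^2/2$ by integrating $y\int_0^{L(x)}|\partial_y u|^2\,ds$ directly rather than first replacing $y$ by $L(x)$, though again this does not matter here.
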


\begin{proof}
For each fixed $x \in \op -1, 1 \cl$
we perform a Fourier sine decomposition in  $y$:
\begin{equation}
\label{eqn:fourier}
u(x,y)\,
=\,
\sum_{k\geq 1} u_k(x) \cdot \sqrt{2}\, \sin \left(\frac{k\pi }{L(x)}\, y \right).
\end{equation}
A computation gives
\[
\begin{split}
\int_{\Omega} \un_{J_{\delta}}(x) \cdot 
|u(x,y)|^2\, 
dx\, dy~
&=~
\sum_{k\geq 1}\, \int_{J_\delta} |u_k(x)|^2\,
L(x)\,
dx
\\
\int_{\Omega}\un_{J_\delta}(x)
\cdot |\partial_y u(x,y)|^2\,
dx\,
dy~
&=~ 
\sum_{k\geq 1}\,
\int_{J_\delta} 
\left(
\frac{k\pi}{L(x)}
\right)^2 \,
|u_k(x)|^2\, 
L(x)\,
dx.
\end{split}
\]
Thus the desired inequality holds with
$$
f(\delta)~ :=~ \sup_{x \in J_{\delta}} \left(\frac{L(x)}{\pi} \right)^2.  
$$
Since $L(x) \to 0$ as $x \to \pm 1$, we have $\dis \lim_{\delta \to 0} f(\delta)=0$. 
\end{proof}

The second lemma will show that the quadratic form $q_h$ may 
be approximated by the quadratic form $a_h$ of section \ref{sec:sum}.\footnote{This approximation is the basis for the method of asymptotic separation of variables described in \cite{HJCMP}.} 
To be precise, we identify the space $L^2(\Omega, dx\, dy)$ 
with the space $\ell^{2}(\N^*, \Hcal)$ 
using the unitary map $u \mapsto \sum_k u_k  \otimes e_k$ where  
$u_k$  is  as in  equation (\ref{eqn:fourier}).
Under this identification, for $u \in \Cinf_0(\Omega)$ we set 
\begin{eqnarray*}
a_h(u)\,
&=& 
\sum_{k\geq 1}\,
h^2
\int_{-1}^1  
|u'_k(x)|^2\,L(x)\,
dx\,
+\,
\int_{-1}^1 
\left(\frac{k\pi}{L(x)}\right)^2\, |u_k(x)|^2\, L(x)\,
dx
\\
&=&
\|h \cdot D'u\|^2\,
+\, 
\|\py u\|^2
\end{eqnarray*}
where $D'$ is the operator defined in Proposition \ref{prop:ncsum}.
As in section \ref{sec:sum}, we will let $\Acal_h$ denote the 
self-adjoint operator associated to $a_h$.

For any $\delta \in \op 0,1 \cl$, we define
\[
\Omega_\delta\,
=\,
\big\{ (x,y)\in \Omega\, :\, x \in I_{\delta}\}.
\]
where we recall that $I_{\delta}= \op -1+\delta, 1-\delta \cl$.
On the domain $\Omega_{\delta}$, the function $L'/L$ is
uniformly bounded. 

\begin{lem}[Asymptotic at first order]
\label{lem:asysep}
For any $\delta \in \op 0,1 \cl$, there exists a constant $C_\delta$ such that, 
for any $u \in H^1_0(\Omega_\delta)$ and $v \in H^1_0(\Omega)$ we have 
\begin{equation}
\label{eqn:asysep}
\left| q_h(u,v)\, -\, a_h(u,v)\right|~
\leq~
C_\delta\, h\,  a_h(u)^{\frac{1}{2}}\, a_h(v)^{\frac{1}{2}}.
\end{equation}
\end{lem}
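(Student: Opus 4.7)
The plan is to move to a rectangular coordinate system in which the Fourier basis underlying the definition of $a_h$ becomes $x$-independent. Concretely, I would introduce the change of variables $(x,t) = (x, y/L(x))$, which diffeomorphically sends $\Omega$ onto the rectangle $R = (-1,1) \times (0,1)$ with Jacobian $dx\, dy = L(x)\, dx\, dt$. Setting $\tilde u(x,t) := u(x,tL(x))$, the chain rule gives $\partial_y u = L(x)^{-1}\partial_t \tilde u$ and $\partial_x u = \partial_x \tilde u - (tL'(x)/L(x))\partial_t \tilde u$, where on the right $\partial_x$ is taken at fixed $t$. The basis $\sqrt{2}\sin(k\pi y/L(x))$ corresponds to $\sqrt{2}\sin(k\pi t)$, so a direct Parseval computation recasts $a_h$ as
\[
a_h(u,v)\, =\, h^2 \int_R \partial_x \tilde u\cdot \overline{\partial_x \tilde v}\, L(x)\, dx\, dt\, +\, \int_R \partial_t \tilde u\cdot \overline{\partial_t \tilde v}\, L(x)^{-1}\, dx\, dt.
\]

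Substituting the chain rule expressions into $q_h(u,v)$ and expanding, the pure $\partial_x \tilde u\cdot \overline{\partial_x \tilde v}$ and $\partial_t \tilde u\cdot \overline{\partial_t \tilde v}$ pieces reproduce exactly $a_h(u,v)$, leaving the remainder
\[
q_h(u,v)\, -\, a_h(u,v)\, =\, -h^2 \int_R tL'\bigl[\partial_x \tilde u\cdot \overline{\partial_t \tilde v}\, +\, \partial_t \tilde u\cdot \overline{\partial_x \tilde v}\bigr]\, dx\, dt\, +\, h^2 \int_R \partial_t \tilde u\cdot\overline{\partial_t \tilde v}\, \frac{(tL')^2}{L}\, dx\, dt.
\]
Every term on the right carries a factor of $L'$ or $(L')^2/L$. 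Since $u$ is supported in $\Omega_\delta$, the integrands vanish outside $I_\delta$, where $L'$ is uniformly bounded by some constant $M_\delta$ and $L$ is bounded below by some $m_\delta > 0$.

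To finish, I would estimate each remainder by Cauchy--Schwarz, splitting the weights into $L^{1/2}$ and $L^{-1/2}$ so as to recover the quadratic structure of $a_h$. For example,
\[
\Bigl|h^2 \int_R tL'\, \partial_x \tilde u\cdot \overline{\partial_t \tilde v}\, dx\, dt\Bigr|~\leq~ M_\delta\, h^2 \Bigl(\int_R |\partial_x \tilde u|^2\, L\, dx\, dt\Bigr)^{1/2} \Bigl(\int_R |\partial_t \tilde v|^2\, L^{-1}\, dx\, dt\Bigr)^{1/2}~\leq~ M_\delta\, h\, a_h(u)^{1/2}\, a_h(v)^{1/2}.
\]
The symmetric mixed term is handled identically, and the last, quadratic-in-$L'$ term yields a bound of order $M_\delta^2\, m_\delta^{-1}\, h^2\, a_h(u)^{1/2}\, a_h(v)^{1/2}$, which is absorbed into $C_\delta h$ once $h$ is bounded. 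The calculation is essentially routine once the rectangular coordinates are in place; the only subtlety is the weight bookkeeping in Cauchy--Schwarz so that the $L^{1/2}/L^{-1/2}$ split matches the weights appearing inside $a_h$. The hypothesis $u \in H^1_0(\Omega_\delta)$ enters only to bound $L'$ and $1/L$ on the support of $u$, whereas $v \in H^1_0(\Omega)$ is used only for the convergence of the sine expansion that defines $a_h$.
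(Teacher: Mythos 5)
Your proof is correct and essentially the same as the paper's: the change of variables $(x,t)=(x,y/L(x))$ is a notational repackaging of the paper's operators $D'$ (differentiation of Fourier coefficients) and $w \mapsto w^* = -y(L'/L)w$ (the identity $\partial_x u = D'u + (\partial_y u)^*$ is exactly your chain-rule expansion), and the remainder terms and Cauchy--Schwarz bookkeeping match term by term. The only stylistic difference is that the paper first proves the estimate for $v \in H^1_0(\Omega_\delta)$ and extends by a cutoff, whereas you observe directly that every remainder integrand carries a derivative of $u$ and so vanishes outside $\Omega_\delta$, which is a slightly cleaner route to the same estimate.
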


\begin{proof}
We first assume that $v \in  H^1_0(\Omega_{\delta})$ and then later 
show how to extend the inequality to $v \in  H^1_0(\Omega)$.

Since $L'/L$ is uniformly bounded on $I_{\delta}$, if $w \in  H^1_0(\Omega_\delta)$, then the function  $w^*$ defined by
{
$$
w^*(x,y)\,
:=\,
-y\, \frac{L'(x)}{L(x)} \cdot w(x,y)
$$
}
also belongs to $ H^1_0(\Omega_{\delta})$.
A computation shows that
\begin{equation}
\label{eqn:D'}
\px u\,
=\,
D' u + \py u^*.
\end{equation}
Using this, a further computation shows that
$$
q(u,v)-a(u,v)\,
=\,
h^2
\cdot
\left(\,
\langle D'u, \py v^* \rangle\,
+\,
\langle \py u^*,D'v \rangle\,
+\,
\langle \py u^*, \py v^* \rangle\,
\right)
$$
where $\langle \cdot, \cdot \rangle$ is the inner product for 
$L^2(\Omega, dx\, dy)$.
Let $C = \sup\{ |L'(x)/L(x)|\, :\,  x \in I_{\delta}$\}. 
By using the Cauchy-Schwarz inequality we find that 
$$
|q(u,v)-a(u,v)|\,
\leq\,
h^2
\cdot
\left(\,
C\, \|D'u\|\, \|\py v \|\,
+\,
C\,
\| \py u\|\, \|D'v \|\,
+\,
C^2\,
\|\py u\|\, \| \py v\|\,
\right).
$$
In general, $h\, \|D' w\| \leq a(w)^{\und}$ 
and $\|\py  w\| \leq a(w)^{\und}$.
Thus, we obtain (\ref{eqn:asysep}) when $v \in H^1_0(\Omega_{\delta})$.

If $v \in H^1_0(\Omega)$, then define $\chi(x,y) = \rho(x)$ where $\rho$ is smooth with compact support in $I_{\delta/2}$ and identically one on $I_\delta$.
Because $\supp(u) \subset \Omega_{\delta}$, we have 
$a_h(u, \chi\, v)=a_h(u,\chi\, v)$
and $q_h(u, \chi\,  v)=q_h(u,\chi\,v)$.
Since $\chi \cdot v \in H_1(\Omega_{\delta/2})$
the argument above gives
$$
\left | a_h(u, v)-q_h(u, v)\right|~
=~
\left | a_h(u, \chi\, v)-q_h(u, \chi\, v)\right|~
\leq~
C'\,
h\, 
a_h(u)^{\frac{1}{2}}\,
a_h(\chi\, v)^{\frac{1}{2}}.
$$
A direct computation shows that there exists 
a constant $C''$ so that $a_h(\chi\, w) \leq C''\,a(w)$ 
for each $w \in H^1_0(\Omega)$  when $h$ is small. 
The claim follows. 
\end{proof}  


\begin{proof}[{\bf The proof of Theorem \ref{thm:limits}}]
The proof is by contradiction. 
Assume that the conclusion is not true. Then  
there exists an eigenbranch $(E_h,u_h)$ 
such that $E_h$ tends to $E_0$ and $\sqrt{ E_0} \cdot L(0)/\pi$ is not an 
integer. 
Thus there exists $a<b$, $h_0>0$, and a  non-negative integer $k_0$, so that 
if $ h\leq h_0$, then $E_h \in [a,b]$ and 
\begin{equation}
\label{eqn:away-from-threshholds}
[a,b]~
\subset~
\biggr] 
\left(
\frac{k_0\,\pi}{L(0)}
\right)^2,\,
\left(
\frac{(k_0+1)\,\pi}{L(0)}
\right)^2
\biggr[\,.
\end{equation}

Because $E_h$ has a limit as $h$ tends to zero, 
its derivative $\partial_h E_h$ is integrable 
with respect to $dh$ near $h=0$.
Thus, it follows from 
(\ref{eqn-Feynman-Hellmann}) that there exists a 
subset $\Hbb \subset \op 0, \infty \cl$ with accumulation point $0$ and a sequence $(\eps_h)_{h\in \Hbb}$  such that $\eps_h \to 0$ and 
\begin{equation}
\label{eqn:integrability}
\|h \cdot \partial_x u_h\|^2\,\leq\,\eps_h\, \|u_h\|^2
\end{equation}
where $\|\cdot\|$ is the Euclidean $L^2$-norm on $\Omega$.
Our goal is to contradict (\ref{eqn:integrability}),
the {\it integrability condition}.

Suppose $h \leq h_0$. 
Then $E_h < b$ and so since $\| |\nabla u_h| \|^2= E_h\,\|u_h\|^2$
we find that $\| \partial_y u_h\|^2\,\leq\, b\, \|u_h\|^2$.
Therefore, it follows from Lemma \ref{lem:Poinc} 
that if $\delta>0$ is sufficiently small and $h \leq h_0$, then 
\[
\|u_h \cdot \un_{J_\delta}\|^2\,
\leq\,
\frac{1}{4}\,
\|u_h\|^2.
\]
We fix $\delta>0$ so that this estimate holds and so that the conclusion of 
Proposition \ref{prop:ncsum} holds.
Choose a cut-off function $\chi$ with support in $I_{\delta/2}$ and that is identically $1$ on 
$I_\delta$ and set 
\[
w_h\,
:=\,
\chi \cdot u_h
\mbox{\ \ \   and  \ \   }
r_h\,:=\,u_h-w_h.
\]
Since $|r_h|\,\leq \, |u_h| \cdot \un_{J_\delta}$, the preceding estimate gives 
$
\dis
\|r_h\|\,\leq\, \|u_h\|/2
$
and hence 
$
\dis 
\|w_h\|\geq  \|u_h\|/2
$.

Let $Q_h$ be the self-adjoint operator associated with $q_h$. We compute 
\[
Q_h (\chi \cdot u_h)\,
=\,
E_h\, \chi\, u_h\,+\,
[Q_h,\chi]\, u_h 
\mbox{\ \ \ and \ \ }
[Q_h,\chi]\,u_h
=\,
-h^2\,
\left(
2\chi'\, \partial_xu_h\,+\, \chi''\, u_h
\right).
\]
Hence we obtain 
\[
\| (Q_h-E)w_h\|\,\leq\, C\,  h\, \|u_h\|.
\]
Thus, if $\phi$ belongs to the form domain of $q_h$, then 
\[
\left| q_h(w_h,\phi) -E\, \langle w_h,\phi_h\rangle \right|\,
\leq\,
C\, h\, \|u_h\|\, \| \phi\|. 
\]
Using Lemma \ref{lem:asysep}, we then obtain  
\begin{equation*}%
\left| 
a_h(w_h,\phi)-E_h\, \langle w_h,\phi\rangle \right |\,\leq\,
C\, h \left( a_h(w_h)^\und\,+\,\|u_h\|\right)a_h(\phi)^\und.
\end{equation*}
The quantity $a_h(w_h)$ is majorized by a multiple of 
$q_h(u_h)= E_h\,\|u_h\|^2$, and  so  
\begin{equation}
\label{eqn:after-compare}
\left| a_h(w_h,\phi)-E_h\, \langle w_h,\phi\rangle \right |\,
\leq\,
C\, h\,  \|u_h\|\, a_h(\phi)^\und.
\end{equation}
We have $a_h(w_h, \phi) =\langle \Acal_h w_h, \phi\rangle$
where we regard $\Acal_h w_h$ as an element of $H^{-1}_{\Acal_h}$,
and moreover, 
${\|\phi\|^2_{H^{1}_{\Acal_h}} = a_h(\phi)+ \|\phi\|^2}$.
Therefore, from (\ref{eqn:after-compare}) we find that if $h \leq h_0$, then 
\begin{equation}
\label{eqn:H-minus-1-estimate}
\| (\Acal_h-E)w_h \|_{H^{-1}_{\Acal_h}}\,
\leq\, 
C\, h\, \|u_h\|. 
\end{equation}

Because of (\ref{eqn:away-from-threshholds})
and because each Fourier coefficient of $w_h$ lies in $\Cinf_0(I_{\delta})$,
we may apply Proposition \ref{prop:ncsum} to estimate $\|w_h\|$. 
By combining the resulting 
estimate with (\ref{eqn:H-minus-1-estimate}) we find that for 
$h \in \Hbb$ with $h \leq h_0$ 
\[
\| w_h\|\,
\leq\,
\eps\, \|u_h\| 
\,+\, 
C\,
\|h \cdot D'w_h\|
\]
where $D'$ is the operator defined in Proposition \ref{prop:ncsum}.\\
Using (\ref{eqn:D'}), we find that 
$$
D' w_h\,
=\,
\chi'\, u_h\, +\, \chi\, \px u_h\, -\, \chi\, \frac{L'}{L}\, \py u_h.
$$

Thus because $\chi$ has support in $I_\delta$, we get the estimate 
\[
\| h \cdot D' w_h\|\, 
\leq\,
C\left( \|h \cdot \partial_x u_h\|\,+\,h\, \|u_h\|\right).
\]
We finally obtain 
\[
\frac{1}{2}\,\| u_h\|\,\leq\,\|w_h\|\,
\leq\,
(C\, \eps\, \|u_h\|\,+\,C\, \|h \cdot \partial_x u\|\, \,+\, C\, h\, \|u_h\|). 
\]
We now can choose $\eps$ and $h_0$ small enough to absorb the first and last terms of the RHS into the LHS, 
yielding 
\[
\frac{1}{4}\,
\|u_h\|\,
\leq\, 
C\, 
\| h \cdot \partial_x u_h\|.
\]
This contradicts the integrability condition (\ref{eqn:integrability}). 
\end{proof}

\begin{thm}[Mixed conditions]
\label{thm:limits-mixed}
Let $E_h$ be an analytic eigenvalue branch of $q_h$ 
restricted $H^1_{0c}(\Omega)$ (or restricted to  $H^1_{0s}(\Omega)$)  relative to 
$\|\cdot\|^2$. Then there exists $k\in \N^*$ such that 
\[
\lim_{h \to 0}\,  E_h~ =~ \left( \frac{(k-\und)\pi}{L(0)}\right)^2.
\]
\end{thm}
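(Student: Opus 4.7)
The proof of Theorem \ref{thm:limits-mixed} follows the scheme of Theorem \ref{thm:limits}, with the Fourier sine basis in \refeq{eqn:fourier} replaced by one adapted to the mixed boundary conditions. The crucial shift is that the vertical frequencies become half-integer multiples of $\pi/L(x)$, so thresholds $(k\pi/L(0))^2$ are replaced by $((k-\und)\pi/L(0))^2$, and the remaining structure carries through.

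In the $H^1_{0c}(\Omega)$ case (Dirichlet on the graph of $L$, Neumann on the $x$-axis) I would use the expansion
\[
u(x,y)\,=\,\sum_{k \geq 1} u_k(x)\cdot \sqrt{2}\, \cos\!\left(\frac{(k-\und)\pi}{L(x)}\, y\right),
\]
in which every mode vanishes at $y=L(x)$ and has vanishing $y$-derivative at $y=0$. In the $H^1_{0s}(\Omega)$ case I would instead use
\[
u(x,y)\,=\,\sum_{k \geq 1} u_k(x)\cdot \sqrt{2}\, \sin\!\left(\frac{(k-\und)\pi}{L(x)}\, y\right),
\]
which vanishes at $y=0$. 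In either case, the associated sequence of singular 1D operators reads
\[
A_{k,h} v\,=\,-h^2\, \frac{1}{L}(L v')'\,+\,\frac{(k-\und)^2 \pi^2}{L^2(x)}\, v,
\]
and their direct sum defines the operator $\Acal_h$ by exact analogy with \S\ref{sec:sum}.

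Next I would check that each of the three main tools of the Dirichlet proof extends to this setup. The Poincaré estimate Lemma \ref{lem:Poinc} transfers with $f(\delta)$ replaced by $\sup_{x \in J_\delta}(2L(x)/\pi)^2$, since the smallest vertical eigenvalue on $[0,L(x)]$ is now $(\pi/(2L(x)))^2$ and still tends to infinity as $x \to \pm 1$. The asymptotic separation Lemma \ref{lem:asysep} transfers because $\px$ of the new basis functions still produces only a term of the form $(y L'(x)/L(x))\cdot\py u$ plus lower-order corrections involving $L'/L$, all uniformly bounded on $\Omega_\delta$. Propositions \ref{prop:ncline} and \ref{prop:ncinterval} are unchanged (they depend only on the qualitative properties of the single-well potential $1/L^2$), and Proposition \ref{prop:ncsum} applies as stated with the new thresholds $T_k := ((k-\und)\pi/L(0))^2$.

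With these analogues established, the contradiction argument of Theorem \ref{thm:limits} runs verbatim. If $E_0 = \lim_{h \to 0} E_h$ equalled no $T_k$, it would lie in an open gap $(T_{k_0},T_{k_0+1})$ to which the summed non-concentration estimate applies. The cutoff $\chi(x)$ depends only on $x$, so multiplication by $\chi$ preserves both $H^1_{0c}(\Omega)$ and $H^1_{0s}(\Omega)$, and the commutator $[Q_h,\chi]$ is unchanged. Combining with the modified Lemma \ref{lem:asysep} yields $\|(\Acal_h - E_h)w_h\|_{H^{-1}_{\Acal_h}} \leq C h \|u_h\|$, and then Proposition \ref{prop:ncsum} contradicts the Feynman--Hellmann integrability condition \refeq{eqn:integrability} exactly as in the Dirichlet case. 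The only real bookkeeping is the re-derivation of the identity \refeq{eqn:D'} with the new basis; since this amounts to differentiating cosines (resp.\ sines) of $(k-\und)\pi y/L(x)$ in $x$ and the error terms are of the same form as before, I do not expect a substantively new obstacle.
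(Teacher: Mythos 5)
Your proposal is correct and follows essentially the same approach as the paper: the paper likewise introduces the cosine basis $\sqrt{2}\cos\!\bigl((k-\und)\pi y/L(x)\bigr)$ for $H^1_{0c}(\Omega)$ and the sine basis $\sqrt{2}\sin\!\bigl((k-\und)\pi y/L(x)\bigr)$ for $H^1_{0s}(\Omega)$, then notes that the analogues of Lemma \ref{lem:Poinc} and Lemma \ref{lem:asysep} hold and the argument of Theorem \ref{thm:limits} runs unchanged. Your additional remarks on the shifted thresholds, the $x$-only cutoff, and the re-derivation of \refeq{eqn:D'} are the right bookkeeping checks and match the paper's (more terse) proof.
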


\begin{proof}
The proofs are nearly identical to the proof of Theorem \ref{thm:limits}.
In the case where $q_h$ is restricted to $H^1_{0c}(\Omega)$,
we use the Fourier decomposition 
$$
u(x,y)\,
=\,
\sum_{k\geq 1} u_k(x) \cdot \sqrt{2}\, \cos \left(\frac{(k-\und)\pi y}{L(x)} \right)
$$
In the case where $q_h$ is restricted to $H^1_{0s}$, 
the identification of $L^2(\Omega)$ with $\ell^2(\N^*, \Hcal)$ is 
given by 
$$
u(x,y)\,
=\,
\sum_{k\geq 1} u_k(x) \cdot \sqrt{2}\, \sin \left(\frac{(k-\und)\pi y}{L(x)} \right).
$$
In each case, an analogue of the Poincaré estimate---Lemma \ref{lem:Poinc}---still holds as does an analogue of Lemma \ref{lem:asysep}. 
Using the same method used to prove Theorem \ref{thm:limits}, 
we obtain the theorem.
\end{proof}


\newpage 
\section{Eigenvalue limits for symmetric domains with Dirichlet conditions}
\label{sec:symmetric}

In this section, we consider domains $\Omega  \subset \R^2$ of the form 
$$
\Omega \,=\,\left \{ (x,y)\in \R^2,~~ |x|\,<\,1,~~ -L(x)\, <\,y\,<\,L(x) \right\}
$$
where $L$ satisfies conditions L1 and L2 of section \ref{sec:interval}.
We consider the family of quadratic forms $q_h$ defined as in (\ref{eqn:quad})
and restricted to $H^1_0(\Omega)$. 

Note that $\Omega$ is invariant under the reflection symmetry $(x,y) \to (x,-y)$.
This symmetry defines an orthogonal decomposition,
$L^2(\Omega, dx\, dy)=L^2_{\rm odd}(\Omega) \oplus L^2_{\rm even}(\Omega)$,
into the sum of the space of `odd' functions and the space of `even' functions. 
If $u, v \in H^1_0(\Omega)$ and $u$ and $v$ have opposite parity, then $q_h(u,v)=0$.
It follows that each eigenspace $V$ of $q_h$ on $H^1_0(\Omega)$ with respect 
to $\|\cdot\|_{L^2}^2$  equals 
$(V \cap L^2_{\rm odd}(\Omega)) \oplus (V \cap L^2_{\rm even}(\Omega))$.
In particular, the spectral problem for $q_h$ on  $H^1_0(\Omega)$ reduces 
to the study of $q_h$ on $H^1_{0}(\Omega) \cap L^2_{{\rm odd}}(\Omega)$
and the study of $q_h$ on $H^1_{0}(\Omega) \cap L^2_{{\rm even}}(\Omega)$.

The Kato-Rellich theory \cite{Kato} applies separately to the family $q_h$ restricted 
to `odd' functions and the family $q_h$ restricted to `even' functions. 
The theory provides analytic paths $h \mapsto u_{h,\ell} \in L^2_{{\rm odd}}(\Omega)$
(resp.  $L^2_{{\rm even}}(\Omega)$)  and $h \mapsto E_{h,\ell}$ so that for each 
the set $\{u_{h,\ell}:\, \ell \in \N^*\}$ is a Hilbertian basis 
for $L^2_{{\rm odd}}(\Omega)$ (resp. $L^2_{{\rm even}}(\Omega)$)
and 
$q_h(u_{h,\ell},v) = E_{h,\ell}\, \langle u_{h,\ell}, v \rangle$ for $v \in H^1_0(\Omega)$.

\begin{thm}
\label{thm:dichotomy}
If $E_h$ is an `odd' analytic eigenvalue branch of $q_h$ restricted 
to $H^1_{0}(\Omega) \cap L^2_{{\rm odd}}(\Omega)$, then 
\[
\lim_{h \to 0}\,  E_h~ =~ \left( \frac{k\, \pi}{L(0)} \right)^2.
\]

If $E_h$ is an `even' analytic eigenvalue branch of $q_h$ restricted 
to $H^1_{0}(\Omega) \cap L^2_{{\rm even}}(\Omega)$, then 
\[
\lim_{h \to 0}\,  E_h~ =~ \left( \frac{ \left(k-\und \right)\pi}{L(0)}\right)^2.
\]
\end{thm}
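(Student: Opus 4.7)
The plan is to reduce Theorem \ref{thm:dichotomy} to Theorems \ref{thm:limits} and \ref{thm:limits-mixed} by exploiting the reflection symmetry $(x,y) \mapsto (x,-y)$ to pass from the symmetric domain $\Omega$ to the upper half-oval
$$
\Omega^+\,=\,\{(x,y) \in \Omega \,:\, y > 0\}\,=\,\{(x,y)\,:\, |x|<1,\ 0<y<L(x)\}.
$$
This $\Omega^+$ is precisely a domain of the form treated in Section \ref{sec:shrink}.

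First I would establish the correspondence at the level of function spaces. If $u \in H^1_0(\Omega) \cap L^2_{{\rm odd}}(\Omega)$, then $u$ has trace zero on $\partial \Omega$ and, by oddness, trace zero on $\{y=0\}$; hence its restriction $u^+:= u|_{\Omega^+}$ lies in $H^1_0(\Omega^+)$, and conversely every element of $H^1_0(\Omega^+)$ extends by odd reflection to an element of $H^1_0(\Omega) \cap L^2_{{\rm odd}}(\Omega)$. Similarly, if $u \in H^1_0(\Omega) \cap L^2_{{\rm even}}(\Omega)$, then $u^+$ has trace zero on the graph of $L$ (but nothing is imposed on $\{y=0\}$), so $u^+ \in H^1_{0c}(\Omega^+)$, and even reflection gives the inverse correspondence. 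In both cases, the restriction map $u \mapsto u^+$ is a bijection, and a straightforward change of variables shows
$$
q_h^{\Omega}(u)\,=\,2 \cdot q_h^{\Omega^+}(u^+),\qquad \|u\|^2_{L^2(\Omega)}\,=\,2\cdot\|u^+\|^2_{L^2(\Omega^+)},
$$
so the spectral problem for $q_h$ on each parity sector on $\Omega$ is unitarily equivalent (up to the harmless factor of $2$ which cancels) to the corresponding problem on $\Omega^+$.

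Next I would transfer the analytic eigenvalue branches through this identification. Since the restriction/reflection map is a unitary isomorphism that does not depend on $h$, and since it intertwines $q_h$ on each parity sector with $q_h$ on the appropriate subspace of $H^1(\Omega^+)$, every analytic eigenbranch $(E_h, u_h)$ of $q_h$ on $H^1_0(\Omega) \cap L^2_{{\rm odd}}(\Omega)$ (resp.\ on $H^1_0(\Omega) \cap L^2_{{\rm even}}(\Omega)$) corresponds to an analytic eigenbranch $(E_h, u_h^+)$ of $q_h$ on $H^1_0(\Omega^+)$ (resp.\ on $H^1_{0c}(\Omega^+)$) with the same eigenvalue. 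Then Theorem \ref{thm:limits} applied in the odd case forces $\lim_{h \to 0} E_h = (k\pi/L(0))^2$ for some $k \in \N^*$, while Theorem \ref{thm:limits-mixed} applied in the even case forces $\lim_{h \to 0} E_h = ((k-\tfrac{1}{2})\pi/L(0))^2$ for some $k \in \N^*$.

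There is no real obstacle in this argument; it is essentially bookkeeping with the symmetry. The only point that deserves a brief verification is that the Kato--Rellich organization of eigenvalues into analytic branches is compatible with the parity decomposition, so that each parity sector carries its own analytic branches to which the theorems of Section \ref{sec:shrink} apply. This is exactly what is stated immediately before Theorem \ref{thm:dichotomy} in the text, so the reduction is legitimate and the proof is complete modulo citing Theorems \ref{thm:limits} and \ref{thm:limits-mixed}.
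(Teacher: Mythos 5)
Your proposal is correct and takes essentially the same route as the paper: restrict to the upper half-oval $\Omega' = \Omega \cap \{y>0\}$, identify odd (resp.\ even) $H^1_0(\Omega)$ functions with $H^1_0(\Omega')$ (resp.\ $H^1_{0c}(\Omega')$), note that the Rayleigh quotient is preserved, and invoke Theorems \ref{thm:limits} and \ref{thm:limits-mixed}. Your explicit tracking of the factor of $2$ in $q_h$ and $\|\cdot\|^2$ is a slightly more careful phrasing of the paper's remark that the ratio $q_h(u)/\|u\|^2$ is unchanged, but the argument is the same.
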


\begin{proof}
This follows from the results of \S \ref{sec:shrink}. Indeed, let 
$\Omega'$ denote the intersection of $\Omega$ with the upper half plane $\{y>0\}$.
Restriction  $u \mapsto u|_{\Omega'}$ defines an isomorphism 
from the space of `odd' $H^1_0(\Omega)$ functions  
onto the space $H^1_0(\Omega')$ 
This restriction also  defines an isomorphism 
from the space of `even' $H^1_0(\Omega)$ functions  
onto the space  $H^1_{0c}(\Omega')$ of functions whose trace along the graph of $L$ vanishes identically.
Moreover, the ratio $q_h(u)/\|u\|^2$ is unchanged by restriction. 
The claim then follows from Theorems \ref{thm:limits} and \ref{thm:limits-mixed}.
\end{proof}

We next combine \ref{thm:dichotomy} and Bourget's hypothesis (Appendix \ref{sec:disk}) to prove the generic simplicity of ellipses.
First note that, up to isometry, each ellipse has the form 
$$
\Ecal_{a,b}\,
:=\,
\{ (x,y)\, :\, (x/a)^2 + (y/b)^2 \leq 1 \}.
$$
Thus, we may naturally identify the set of isometry classes of ellipses 
with the set of $\{(a,b)\, :~ 0 < b \leq a < \infty\}$. 

\begin{thm}[Theorem \ref{thm:main}]
\label{thm:ellipse-simple}
There exists a countable subset $\Ccal \subset \op 0,1]$ 
so that if $b/a \notin \Ccal$, then each eigenvalue of the 
Dirichlet Laplace operator of the ellipse $\Ecal_{a,b}$ is simple. 
\end{thm}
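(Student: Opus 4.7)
My plan is to combine the $h \to 0$ dichotomy of Theorem \ref{thm:dichotomy} with Bourget's hypothesis evaluated at the disk $h = 1$. First I would specialize the setup of \S\ref{sec:symmetric} to $L(x) = \sqrt{1-x^2}$, so that $\Omega$ is the open unit disk, $L(0) = 1$, and, by Remark \ref{remk:stretch}, the stretched domain $\Omega_h$ is isometric (up to an overall scaling by $a$) to $\Ecal_{a,b}$ with $h = b/a$. Simplicity of the Dirichlet spectrum of $\Ecal_{a,b}$ is then equivalent to simplicity of the spectrum of $q_h/\|\cdot\|^2_{L^2(\Omega)}$ on $H^1_0(\Omega)$. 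Using the reflection $(x,y) \mapsto (x,-y)$ I would split $H^1_0(\Omega)$ into orthogonal odd and even subspaces and apply Kato--Rellich perturbation theory in each parity, obtaining two countable families of globally defined real-analytic eigenvalue branches $h \mapsto E^{(o)}_\ell(h)$ and $h \mapsto E^{(e)}_\ell(h)$ on $(0, \infty)$, which together exhaust the full spectrum.

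The full spectrum is simple at a given $h$ precisely when no two branches coincide there. Since two distinct real-analytic functions on the connected interval $(0, \infty)$ agree only on a locally finite, hence countable, set, and since there are countably many pairs of branches, it suffices to show that every pair of branches is distinct as an analytic function; the bad set $\Ccal \subset (0,1]$ will then automatically be countable. For pairs of \emph{opposite} parity this is immediate from Theorem \ref{thm:dichotomy}: odd branches satisfy $\lim_{h \to 0} E^{(o)}_\ell(h) = (k\pi)^2$ for some $k \in \N^*$, while even branches satisfy $\lim_{h \to 0} E^{(e)}_\ell(h) = ((k - \und)\pi)^2$. These two sets of real numbers are disjoint, so any odd branch and any even branch have distinct limits at $0$ and hence are distinct analytic functions.

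The main obstacle concerns a pair of \emph{same}-parity branches, which may well share the same $h \to 0$ limit. To separate them I would evaluate at $h = 1$, where $\Ecal_1$ is the unit disk. The odd Dirichlet eigenvalues of the disk are $\{ j_{n,k}^2 : n \geq 1,\ k \geq 1\}$ (with eigenfunction $J_n(j_{n,k} r)\sin(n\theta)$) and the even ones are $\{ j_{n,k}^2 : n \geq 0,\ k \geq 1\}$ (with $J_n(j_{n,k} r)\cos(n\theta)$), where $j_{n,k}$ denotes the $k$-th positive zero of $J_n$. By Bourget's hypothesis (Appendix \ref{sec:disk}), these numbers are pairwise distinct for distinct $(n,k)$, so at $h = 1$ no two odd branches and no two even branches take the same value, forcing same-parity branches to be distinct analytic functions. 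Neither ingredient alone suffices --- Theorem \ref{thm:dichotomy} is silent within a parity, while Bourget's hypothesis only separates eigenvalues at the single point $h = 1$ --- but their combination rules out all coincidences of branches and gives the theorem.
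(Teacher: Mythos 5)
Your proposal is correct and uses the same two ingredients as the paper in the same roles: Theorem \ref{thm:dichotomy} to separate branches of opposite parity via their $h \to 0$ limits, and Bourget's hypothesis at $h = 1$ to separate branches of the same parity. The paper phrases the $h=1$ step through Corollary \ref{thm:struct} (any $2$-dimensional disk eigenspace splits $1$--$1$ between odd and even), while you phrase it as simplicity of the odd and even disk spectra separately; these are equivalent, so the argument is essentially the paper's, organized as a case split rather than a reductio.
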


\begin{proof}
The multiplicities of the Laplace spectrum of a domain are unchanged by 
a homothety of the domain.
In particular, the spectrum of $\Ecal_{a,b}$ is simple if and only if the 
spectrum of $\Ecal_{a/b,1}$.   Let $\Omega$ be the unit disk. 
If we let $\Omega_h$ denote the image of $\Omega$ 
image under $(x,y) \mapsto (x/h,y)$, then ${\Ecal_{1/h,1}= \Omega_h}$. 
As discussed in Remark \ref{remk:stretch}, the study of the 
Dirichlet Laplace spectrum of $\Omega_h$ is equivalent to the study of 
the quadratic form $q_h$ of (\ref{eqn:quad}) restricted to $H^1_0(\Omega)$
relative to $\|\cdot\|^2$. Thus, to prove the claim,
it suffices to show that there exists a countable set $\Ccal \subset \op 0, 1]$
such that $q_h$ has simple spectrum.

To `construct' the set $\Ccal$, it suffices to show that each of the 
real-analytic eigenvalue branches of $q_h$ is distinct. 
Indeed, let $\{ h \mapsto E_{h, \ell}\, : \, \ell \in \N^* \}$ denote the 
collection of analytic eigenvalue branches associated to the family $q_h$.
If for some $h$ we have $E_{h, \ell} \neq E_{h, \ell'}$ then 
analyticity implies that the set 
$\Ccal_{\ell, \ell'}:=\{h\, :\, E_{h, \ell} =  E_{h, \ell'}\}$
is countable. Thus, $\dis \Ccal = \bigcap_{\ell \neq \ell'} \Ccal_{\ell, \ell'}$
is the desired set.

To prove that the various analytic eigenvalue branches are distinct, we argue by contradiction. Suppose to the contrary, that $\ell \neq \ell'$ but 
$E_{h,\ell}= E_{h, \ell'}$ for each $h \in \op 0,1]$. 
Let $u_{h,\ell}$ and $u_{h,\ell'}$ denote the corresponding 
analytic eigenfunction branches. For each $h$ we have $u_{h,\ell} \perp u_{h,\ell'}$
and in particular $u_{1,\ell} \perp u_{1,\ell'}$.
It follows from the discussion before Theorem \ref{thm:dichotomy}
that either $u_{h, \ell}$ (resp. $u_{h, \ell'}$) is `odd' for each $h$
or  $u_{h, \ell}$ (resp. $u_{h, \ell'}$) is `even' for each $h$.

It is well-known that each eigenspace $V$ of $q_1$ is at most two dimensional,
and moreover, if $\dim(V)=2$, then 
$\dim(V \cap L^2_{{\rm odd}}(\Omega))=1= \dim(V \cap L^2_{{\rm even}}(\Omega))$
(see Corollary \ref{thm:struct} in Appendix \ref{sec:disk}).  Therefore, $u_{1,\ell}$ and  $u_{1,\ell'}$
span an eigenspace of $q_1$ and they have opposite parity. 
Hence they have opposite parity for each $h$. 
Theorem  \ref{thm:dichotomy} then implies 
$\dis \lim_{h \to 0} E_{h, \ell} \neq  \lim_{h \to 0} E_{h, \ell'}$,
a contradiction.
\end{proof}

We now remark that this proof also yields a way to construct ellipses that have some multiplicity in the spectrum. We obtain the following proposition.

\begin{prop}
\label{prop:mult}
    There exists a sequence $(h_n)_{n\geq 0}$ going to zero such that, for any $n$, if $b/a=h_n$ then the ellipse $\Ecal_{a,b}$ has at least one multiple eigenvalue.
\end{prop}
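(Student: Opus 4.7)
The plan is to combine an intermediate value argument with Weyl's law and the limit structure from Theorem~\ref{thm:dichotomy}. For each $h \in \op 0, 1 \cl$, let $\lambda_n^o(h)$ and $\lambda_m^e(h)$ denote the ordered eigenvalues of $q_h$ restricted to $H^1_0(\Omega) \cap L^2_{{\rm odd}}(\Omega)$ and $H^1_0(\Omega) \cap L^2_{{\rm even}}(\Omega)$ respectively, where $\Omega$ is the open unit disk. These are continuous in $h$ by min-max and non-decreasing in $h$ by the Feynman-Hellmann identity (\ref{eqn-Feynman-Hellmann}) applied within each symmetry subspace. Since $L(0)=1$, Theorem~\ref{thm:dichotomy} asserts that odd branch limits lie in $\{(k\pi)^2\}_{k \geq 1}$ and even branch limits in $\{((k-\und)\pi)^2\}_{k \geq 1}$.

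The core of the argument is to pin down $\lambda_1^o(h)$ and $\lambda_m^e(h)$ at both endpoints. At $h = 1$, $q_1$ is the Dirichlet Laplacian on the disk, so $\lambda_1^o(1)$ equals the square of the first positive zero of $J_1$, and by Weyl's law $\lambda_m^e(1) \to +\infty$ as $m \to \infty$. As $h \to 0^+$, monotonicity together with Theorem~\ref{thm:dichotomy} forces $\lambda_n^o(h) \geq \pi^2$ and $\lambda_m^e(h) \geq (\pi/2)^2$ for every $n, m, h$, since each analytic branch lies above its own limit. For the matching upper bounds I will use the min-max principle with explicit test functions of the form $u_j(x,y) = \phi_j(x)\, \sin(\pi y / L(x))$ in the odd case and $v_j(x,y) = \psi_j(x)\, \cos(\pi y / (2 L(x)))$ in the even case, with $\phi_j, \psi_j \in \Cinf_0(\op -1, 1 \cl)$ linearly independent and concentrated near $x = 0$. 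A short Rayleigh-quotient computation using $L(0) = 1$ yields $\lambda_n^o(h) \to \pi^2$ and $\lambda_m^e(h) \to (\pi/2)^2$ for every fixed $n, m$.

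Introducing $g_m(h) := \lambda_1^o(h) - \lambda_m^e(h)$, one obtains $\lim_{h \to 0^+} g_m(h) = 3\pi^2/4 > 0$. On the other hand, for any fixed $\epsilon \in \op 0, 1 \cl$, Weyl's law applied to $q_\epsilon$ on the bounded domain $\Omega_\epsilon$ (cf.\ Remark~\ref{remk:stretch}) forces $\lambda_m^e(\epsilon) \to +\infty$ as $m \to \infty$; choosing $m = m(\epsilon)$ large enough that $\lambda_m^e(\epsilon) > \lambda_1^o(1) \geq \lambda_1^o(\epsilon)$ gives $g_m(\epsilon) < 0$. Continuity of $g_m$ and the intermediate value theorem then produce $h_m^\star \in \op 0, \epsilon \cl$ with $\lambda_1^o(h_m^\star) = \lambda_m^e(h_m^\star)$. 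Since odd and even eigenfunctions are $L^2(\Omega)$-orthogonal, this equality exhibits a multiple eigenvalue of $q_{h_m^\star}$, hence a multiple Dirichlet eigenvalue of $\Ecal_{1/h_m^\star, 1}$. Taking $\epsilon_n = 1/n$ and setting $h_n := h_{m(\epsilon_n)}^\star$ gives the required sequence with $h_n \to 0$.

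The main obstacle will be establishing the $h \to 0^+$ upper bounds for $\lambda_1^o$ and $\lambda_m^e$: the lower bounds are immediate from Theorem~\ref{thm:dichotomy} plus monotonicity, but the matching upper bounds require producing enough linearly independent admissible quasimodes in the correct symmetry subspace, and verifying their $H^1_0(\Omega)$-membership despite the degeneracy of $L$ at $x = \pm 1$, which is handled by keeping $\phi_j, \psi_j$ compactly supported in $\op -1, 1 \cl$. Once these limit values are in hand, the intermediate-value step and the reduction to $\Ecal_{a,b}$ via Remark~\ref{remk:stretch} are immediate.
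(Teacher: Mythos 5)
Your proposal is correct and follows essentially the same strategy as the paper: pin down the first odd eigenvalue (monotone, converging to $\pi^2$ as $h\to 0$) against a high-index eigenvalue that converges to $\pi^2/4$, and apply the intermediate value theorem. The only technical difference is that the paper compares $\lambda_1^{\rm odd}(h)$ against the $N$th eigenvalue of the \emph{full} spectrum of $\Ecal_{1/h,1}$, whereas you compare it against the $m$th \emph{even} eigenvalue; your variant is marginally cleaner, since a coincidence $\lambda_1^{\rm odd}(h^\star)=\lambda_m^{\rm even}(h^\star)$ produces two orthogonal eigenfunctions with the same eigenvalue directly, without the short bookkeeping step one needs in the paper's version to rule out that the $N$th eigenvalue at the crossing is the odd one itself.
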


\begin{proof}
Fix some $h_0$, it suffices to show that there exists $h<h_0$ so that the spectrum of $\Ecal_{1/h,1}$ has (at least) one eigenvalue of multiplicity at least $2$.
Let $h\mapsto E_{h,\mathrm{odd}}$ be the smallest odd eigenvalue of $\Ecal_{h,1}$. This map is continuous, piecewise analytic, non-decreasing and it tends to $\pi^2$. 
Using, min-max theory, it is easy to show that, for any $N$ the 
$N^{{\rm th}}$ eigenvalue of $\Ecal_h$ converges to $\frac{\pi^2}{4}$. 
Let now $N$ be such that the $N^{{\rm th}}$ eigenvalue of $\Ecal_{h_0,1}$ is greater than $E_{h_0,\mathrm{odd}}$. By continuity, there must be some $h<h_0$ such that the 
$N^{{\rm th}}$ eigenvalue of $\Ecal_{1/h,1}$ is equal to $E_{h,\mathrm{odd}}$. 
\end{proof}


\newpage

\section{Ellipsoids}
\label{sec:ellipsoids} 

In this section, we prove that the generic ellipsoid has simple spectrum. As for ellipses, this result relies 
on knowing precisely how the multiplicities in the spectrum of the unit ball are distributed and on the limiting 
behaviour of analytic eigenbranches upon the stretching/compressing degeneration. 

We denote by $\Ebb_{a,b,c}$ the ellipsoid:
$$
\Ebb_{a,b,c}~ 
=~
\{(x,y,z) \in \R^3\, :\, (x/a)^2 + (y/b)^2 +(z/c)^2 < 1\}.
$$

We first consider the ellipsoids with `circular cross-sections': 
$$
\Ebb_{1/h,1,1}~ 
=~
\{(x,y,z) \in \R^3\, :\, (h\,x)^2 + y^2 +z^2 < 1\}.
$$

\begin{prop}
\label{prop:circular}
There exists a countable subset $\Ccal \in \op 0,1 ]$ such that if $ h \notin \Ccal$,
then each eigenspace of the Dirichlet Laplacian on $\Ebb_{1/h,1,1}$ is at most two dimensional. 
Moreover, if $V$ is a 2-dimensional eigenspace of $\Ebb_{1/h,1,1}$, then there exist
$u_{\pm} \in V$ so that $V = {\rm span} \{u_+,u_-\}$ and $u_{\pm}(x,y,-z)= \pm u(x,y,z)$.
\end{prop}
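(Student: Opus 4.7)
The plan is to exploit the $SO(2)$ rotational symmetry of $\Ebb_{1/h,1,1}$ around the $x$-axis and follow the scheme of Sections \ref{sec:sum}--\ref{sec:symmetric}. By Remark \ref{remk:stretch}, the Dirichlet spectrum of $\Ebb_{1/h,1,1}$ equals the spectrum of $q_h(v) = h^2 \|\partial_\xi v\|^2 + \|\partial_\eta v\|^2 + \|\partial_\zeta v\|^2$ on $H^1_0(B)$ relative to $\|\cdot\|^2$, where $B$ is the unit ball with Cartesian coordinates $(\xi, \eta, \zeta)$. Cylindrical coordinates $(\xi, r, \theta)$ with $\eta = r\cos\theta$, $\zeta = r\sin\theta$ diagonalise the $SO(2)$-action and yield an orthogonal decomposition $L^2(B) = L^2_0(B) \oplus \bigoplus_{m\geq 1}(L^2_{m,c}(B) \oplus L^2_{m,s}(B))$, where $L^2_{m,c}$ (resp. $L^2_{m,s}$) consists of functions of the form $w(\xi,r)\cos(m\theta)$ (resp. $w(\xi, r)\sin(m\theta)$). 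The reflection $z \mapsto -z$ corresponds to $\theta \mapsto -\theta$, so $L^2_{m,c}$ consists of $z$-even and $L^2_{m,s}$ of $z$-odd functions. Each subspace is invariant under $q_h$, and rotation by $\pi/(2m)$ conjugates $L^2_{m,c}$ to $L^2_{m,s}$, so their spectra are identical, producing paired eigenvalues for $m \geq 1$.

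I would next reduce each sector to a 2D spectral problem on the half-disk $D_+ = \{(\xi, r) : \xi^2 + r^2 < 1,~ r > 0\}$ with weight $r\, d\xi\, dr$: after integrating out $\theta$, the form becomes
\[
\tilde{q}_h^m(w) = \int_{D_+}\Bigl[ h^2 |\partial_\xi w|^2 + |\partial_r w|^2 + \frac{m^2}{r^2} |w|^2 \Bigr]\, r\, d\xi\, dr,
\]
with Dirichlet condition on $\{r = L(\xi)\}$ where $L(\xi) = \sqrt{1-\xi^2}$, and regularity at $\{r = 0\}$ (which forces $w \to 0$ there when $m \geq 1$). Kato--Rellich perturbation theory yields analytic eigenvalue branches $h \mapsto E_{m, n, h}$ in each sector, and the Feynman--Hellmann identity shows each branch is monotone non-decreasing and admits a limit as $h \to 0$.

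The central technical input is the analogue of Theorem \ref{thm:limits}: for each sector the limit of an analytic branch $E_{m, n, h}$ as $h \to 0$ is $j_{m,k}^2$ for some $k \in \N^*$, where $j_{m,k}$ denotes the $k$-th positive zero of the Bessel function $J_m$. I would follow the template of Sections \ref{sec:line}--\ref{sec:shrink} adapted to the weight $r$ and potential $m^2/r^2$: expand $w(\xi, r) = \sum_{k \geq 1} c_k(\xi)\, \varphi_{m,k,\xi}(r)$ along the cross-sectional Bessel basis (with eigenvalues $(j_{m,k}/L(\xi))^2$), approximate $\tilde q_h^m$ by the separated quadratic form $\sum_k \int [h^2 |c_k'|^2 + (j_{m,k}/L(\xi))^2 |c_k|^2]\, L\, d\xi$, prove the corresponding non-concentration estimate as in Proposition \ref{prop:ncsum}, and contradict the integrability condition $\int_0^{h_0}\|h\partial_\xi u_h\|^2\, dh < +\infty$ unless the limit of $E_{m, n, h}$ lies among the thresholds $(j_{m,k}/L(0))^2 = j_{m, k}^2$. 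The main obstacle is the case $m = 0$: the confining $1/r^2$ potential vanishes and the weight $r$ degenerates on the axis, so the non-concentration estimate near $\{r = 0\}$ must be proved afresh using the vanishing of the weight combined with the regularity constraint.

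Finally, I would invoke Bourget's hypothesis in the form $j_{m, k} = j_{m', k'}$ if and only if $(m, k) = (m', k')$ for $m, m' \geq 0$ and $k, k' \geq 1$. This guarantees that distinct analytic branches---whether in the same sector (different $k$) or in different sectors (different $m$)---have distinct limits as $h \to 0$, and therefore coincide only on countable subsets of $\op 0, 1]$. Taking $\Ccal$ to be the union of all these countable subsets together with $\{1\}$, I would conclude that for $h \notin \Ccal$ the only possible eigenvalue coincidences are those forced by the isospectrality of $L^2_{m,c}$ and $L^2_{m,s}$ for the same $m \geq 1$; these pairs furnish, by construction, a $z$-even and a $z$-odd basis element of each resulting two-dimensional eigenspace, establishing the proposition.
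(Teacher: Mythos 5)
Your scheme follows the paper's in broad strokes: decompose by the $SO(2)$ symmetry about the $x$-axis, show that eigenbranches in sector $m$ converge to $j^2_{m,k}$, and invoke Bourget's hypothesis. The real decomposition into $L^2_{m,c}\oplus L^2_{m,s}$ (which is exactly the $z$-even/$z$-odd split) is a perfectly good alternative to the paper's complex decomposition $L^2_m\oplus L^2_{-m}$: it replaces the paper's complex-conjugation argument with the observation that the two real sectors are conjugate under the rotation $\theta\mapsto\theta+\pi/(2m)$, and it makes the parity of the eventual basis $\{u_+,u_-\}$ manifest.

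However, there is a genuine gap in the final step. You assert that distinct analytic branches \emph{within the same sector} have distinct limits as $h\to 0$. This is false: infinitely many branches converge to the same threshold $j^2_{m,k}$, just as in a thin rectangle infinitely many Dirichlet eigenvalues accumulate at each transverse level $\pi^2 n^2$. The limits alone therefore cannot separate the within-sector branches, and the conclusion that all coincidences between branches in a fixed $L^2_{m,c}$ (or $L^2_{m,s}$) occur only on countable sets is unjustified. The paper closes this hole at the \emph{other} endpoint: at $h=1$ (the round ball), each Dirichlet eigenspace intersects each sector $H^1_{0,m}(B)$ in a one-dimensional subspace, and Bourget's hypothesis applied to the half-integer orders $J_{\ell+\und}$ shows that the eigenvalues occurring within a fixed sector $m$ are already simple at $h=1$. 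Two orthogonal analytic eigenbranches in the same sector therefore cannot agree at $h=1$, so they are distinct analytic functions and their coincidence set is countable. You need this simplicity-at-$h=1$ input; the $h\to 0$ asymptotics do not suffice. Finally, the worry you flag about $m=0$ is less of an obstacle than you suggest: once you expand along the disk eigenfunctions $J_0(\sqrt{\lambda_k}\,r)$, the longitudinal thresholds $\lambda_k/L^2(x)$ still diverge at $x=\pm1$ because $\lambda_1=j_{0,1}^2>0$, so the non-concentration estimate of Proposition~\ref{prop:ncsum} applies without a separate argument near the axis.
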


The proof of Proposition \ref{prop:circular} will use the same
ideas as the proof of Theorem \ref{thm:ellipse-simple} in a slightly different setting.
First, by a  change of variables, we are led to consider
the following quadratic form
\begin{equation}
\label{eqn:D-form}
\dis 
q_h(u)
:=
\int_B 
\left(h^2  \cdot |\partial_x u|^2\, +\, |\partial_y u|^2\, +\, |\partial_z u|^2 \right)\, 
dx\, dy.
\end{equation}
defined on $H^1(B)$ where $B$ is the unit ball in $\R^3$.

Instead of using a reflection symmetry to decompose the spectrum, we will 
use the natural rotational symmetry. To be concrete, it will prove convenient 
to use cylindrical coordinates $(x,r,\theta)$ where $(y,z)= (x, r \cos(\theta), r \sin(\theta))$. Then the rotation about the $x$-axis 
through angle $\alpha$ is given by 
$$  
R_{\alpha}(x,r,\theta)~
=~
(x, r, \theta + \alpha).
$$
The map $u \mapsto u \circ R_{\alpha}$ is an isometry of both $L^2(B)$ and $H^1_0(B)$.
Note that $q_h(u \circ R_{\alpha})= q_h(u)$ for each $u \in H^1(B)$.
For $m \in \Z$ define 
$$
L^{2}_{m}(B)~
=~
\{ 
u \in L^2(B)\,
:\,
u \circ R_{\alpha}\,
=\,
e^{i m \alpha} \cdot u 
\}
$$
\begin{equation}
\label{eqn:H_m}
H^{1}_{0,m}(B)~
=~
\{ 
u \in H^1_0(B)\,
:\,
u \circ R_{\alpha}\,
=\,
e^{i m \alpha} \cdot u 
\}
\end{equation}
Then 
$\dis L^2_{0}(B)= \bigoplus_m L^2_{m}(B)$ and 
$\dis H^1_{0}(B)= \bigoplus_m H^1_{0,m}(B)$.

We fix $m \in \Z$, and let $q_{h,m}$ denote the restriction $q_h$ to $H^1_{0,m}(B)$.

Let $L(x)= \sqrt{1-x^2}$, and let $D_x$ denote the disk $\{(r, \theta)\, :\, r < L(x)\}$. 
The rotation $R_{\alpha}$ preserves $D_x$, and we define the spaces
$L^2_{m}(D_x)$ and  $H^1_{0,m}(D_x)$ as above.
Let $\{\phi^m_{\lambda}\}$ be a Hilbertian basis of $H^1_{0,m}(D_0)$ 
consisting of eigenfunctions of the Dirichlet Laplacian on the unit disk 
$D_0$.\footnote{Each $\phi_{\lambda}^m$ is a multiple of $J_m(\sqrt{\lambda}\, r)\,e^{im\theta}$ where $J_m$ is the Bessel function of order $m$.}
Let $\spec_{m}(D_0)$ denote the set of eigenvalues associated with $\{\phi^m_{\lambda}\}$.

As with the case of ellipses, the proof of
Proposition \ref{prop:circular} is based on a
determination of the limits of analytic eigenbranches:

\begin{prop}
\label{prop:limits-ball}
For any analytic eigenvalue branch $E_h$ of $q_{h,m}$ on $H^1_{0,m}(\Omega)$ relative to 
$\|\cdot\|^2$, there exists $\lambda \in \spec_m(D_0)$ such that 
\[
\lim_{h \to 0}\,  E_h~ =~  \frac{\lambda}{L(0)^2}
\]
\end{prop}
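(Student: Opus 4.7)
The plan is to adapt the argument of Theorem \ref{thm:limits} to the 3D setting by exploiting the rotational symmetry encoded in $H^1_{0,m}$ and replacing the Fourier sine expansion in $y$ by an expansion in cross-sectional Bessel modes at fixed angular mode $m$. Concretely, I would work in cylindrical coordinates $(x, r, \theta)$ about the $x$-axis and identify each $u \in H^1_{0,m}(B)$ with $v(x, r)\, e^{im\theta}$. Setting $\Omega = \{(x, r) : |x| < 1,\ 0 < r < L(x)\}$ and computing in cylindrical coordinates gives
\[
q_h(u) = 2\pi \int_\Omega \Bigl( h^2 |\partial_x v|^2 + |\partial_r v|^2 + \frac{m^2}{r^2} |v|^2 \Bigr) r\, dr\, dx, \qquad \|u\|^2 = 2\pi \int_\Omega |v|^2 r\, dr\, dx,
\]
so the eigenvalue problem becomes 2-dimensional with weighted measure $r\, dr\, dx$ and an extra singular potential $m^2/r^2$.

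For each $x \in \op -1, 1 \cl$, the cross-sectional Dirichlet problem on the disk $D_x$ of radius $L(x)$ at angular mode $m$ has eigenvalues $\lambda_{k,m}/L(x)^2$ with $\lambda_{k,m} \in \spec_m(D_0)$, and orthonormal eigenfunctions $\psi_{k,m}(x, r) = L(x)^{-1} \tilde\psi_{k,m}(r/L(x))$, where $\tilde\psi_{k,m}$ is the unit-normalized radial eigenfunction on $D_0$. I would expand $v(x, r) = \sum_{k \geq 1} v_k(x)\, \psi_{k,m}(x, r)$ and define the asymptotic quadratic form
\[
a_h(u) = 2\pi \sum_{k \geq 1} \int_{-1}^1 \Bigl( h^2 L(x) |v_k'(x)|^2 + \frac{\lambda_{k,m}}{L(x)^2} L(x) |v_k(x)|^2 \Bigr) dx,
\]
which has the structure studied in \S\ref{sec:sum} with thresholds $\lambda_{k,m}/L(0)^2$ in place of $(k\pi/L(0))^2$. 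Since the $\lambda_{k,m}$ are discrete, the proofs of Propositions \ref{prop:ncline}--\ref{prop:ncsum} apply with minor modifications to the one-dimensional operators $A_{k,h,m} w = -h^2 L^{-1}(L w')' + (\lambda_{k,m}/L^2) w$ on $L^2((-1,1), L\, dx)$, yielding a non-concentration estimate for the associated $\Acal_h$ at energies $E$ in any gap $\op \lambda_{k_0, m}/L(0)^2,\ \lambda_{k_0+1, m}/L(0)^2 \cl$.

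Next I would establish the analog of Lemma \ref{lem:asysep}: for $\delta \in \op 0, 1 \cl$ there exists $C_\delta$ such that $|q_h(u, w) - a_h(u, w)| \leq C_\delta\, h\, a_h(u)^{1/2} a_h(w)^{1/2}$ for $u, w \in H^1_{0,m}(B)$ with the planar reduction of $u$ supported in $\Omega_\delta$. The key identity is
\[
\partial_x \psi_{k,m}(x, r) = -\frac{L'(x)}{L(x)} \bigl( \psi_{k,m}(x, r) + r\, \partial_r \psi_{k,m}(x, r) \bigr),
\]
which follows directly from the scaling; uniform boundedness of $L'/L$ on $I_\delta$ then controls all cross terms in $\partial_x v$. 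An analog of Lemma \ref{lem:Poinc} is automatic here since $\lambda_{1,m}/L(x)^2 \to \infty$ as $|x| \to 1$ already bounds the mass on $J_\delta$ by the cross-sectional energy there. With these ingredients in hand, the contradiction argument of Theorem \ref{thm:limits} transports verbatim: if $E_h \to E_0 \notin \{\lambda/L(0)^2 : \lambda \in \spec_m(D_0)\}$, then integrability of $\partial_h E_h$ at $h=0$ via Feynman-Hellmann produces a subsequence on which $\|h\, \partial_x u_h\| = o(\|u_h\|)$, contradicting the non-concentration estimate.

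The main obstacle I anticipate is the careful verification that the one-dimensional estimates of \S\ref{sec:line}--\S\ref{sec:interval}---in particular the Bohr-Sommerfeld spectral spacing and the off-diagonal vanishing from \cite{Zelditch-Note}---carry over verbatim to the family of operators $A_{k,h,m}$, together with the explicit bookkeeping of the cross terms $\partial_x \psi_{k,m}$ in the comparison $q_h \approx a_h$. Lemma \ref{lem:equivAP} is purely local on $I_\delta$ and insensitive to the precise shape of the potential, so the reduction to a standard Schrödinger operator should be mechanical; the essential care is in ensuring that summation over $k$ preserves the estimates uniformly, which is exactly the content of Proposition \ref{prop:ncsum} and hinges on the gap separation of $\lambda_{k,m}/L(0)^2$ for $k > k_0$.
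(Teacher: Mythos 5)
Your proposal follows the same overall strategy as the paper's (sketched) proof: expand in cross-sectional angular-mode-$m$ Dirichlet eigenfunctions of the scaled disk, replace $q_{h,m}$ by an asymptotic quadratic form $a_{h,m}$ that decouples into one-dimensional Sturm--Liouville problems, run the non-concentration argument of \S2--\S4, and combine with the Poincar\'e estimate and Feynman--Hellmann integrability to force the limit to a threshold $\lambda/L(0)^2$. The key scaling identity $\partial_x \psi_{k,m} = -\tfrac{L'}{L}\bigl(\psi_{k,m}+ r\,\partial_r\psi_{k,m}\bigr)$ that you single out is indeed the right 3D analogue of equation (\ref{eqn:D'}), and your observation that the Poincar\'e estimate follows automatically from $\lambda_{1,m}/L(x)^2\to\infty$ as $|x|\to1$ is correct.

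There is, however, a concrete internal inconsistency in your setup that you should repair before the estimates of \S3--\S4 will carry over mechanically. You normalize the cross-sectional modes by writing $\psi_{k,m}(x,r)=L(x)^{-1}\tilde\psi_{k,m}(r/L(x))$ so that each $\psi_{k,m}(x,\cdot)$ is $L^2$-normalized on $D_x$. With that choice, $\|u\|^2 = \sum_k\int_{-1}^1 |v_k(x)|^2\,dx$ and the natural one-dimensional Hilbert space is the \emph{unweighted} $L^2\bigl(\op -1,1\cl,\,dx\bigr)$, and your $a_h$ should read $a_h(u)=\sum_k\int(h^2|v_k'|^2 + \lambda_{k,m}L^{-2}|v_k|^2)\,dx$. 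The $L(x)\,dx$ weight you wrote is a carry-over from the 2D case and matches neither your normalization nor the alternative convention used in the paper, which expands in $\phi^m_\lambda(r/L(x),\theta)$ \emph{without} the $L(x)^{-1}$ prefactor and therefore works on $\Hcal = L^2\bigl(\op -1,1\cl,\,L^2(x)\,dx\bigr)$ with one-dimensional operators $A_{h,m,k}v = -h^2L^{-2}(L^2 v')' + \lambda_k L^{-2}\,v$. The distinction matters precisely in the place you flag as delicate: Lemma \ref{lem:equivAP} and its analogue pivot on a change of unknown of the form $L^{\alpha}\psi = \chi\phi$, and the exponent $\alpha$ is dictated by the weight. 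Whichever convention you adopt, make it uniform — either drop the $L(x)^{-1}$ normalization and keep $L^2\,dx$ (paper's route, so that the 2D arguments carry over after replacing $L$ by $L^2$ in the measure and in the Sturm--Liouville form), or keep the normalization and work with $dx$, in which case the operator becomes $-h^2 v'' + \lambda_{k,m} L^{-2}v$ with no first-order term. Apart from this bookkeeping slip, the argument is sound and identical in substance to the paper's.
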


\begin{proof}
The proof follows the same outline as the proof of Theorem \ref{thm:limits}.
We provide a sketch and leave the details to the reader. 
For $u \in H^1_0(B)$ with compact support we have 
\begin{equation}
\label{eqn:eigen-expansion}
u(x,r,\theta)~
=~
\sum_{\lambda \in \spec_m(D_0)}\,
u_{\lambda}(x)\
\cdot\,
\phi^m_{\lambda}\left( \frac{r}{L(x)},\, \theta \right).
\end{equation}
Given $\delta \in \op 0, 1 \cl$, let 
$B_{\delta}= \{(x,y,z) \in B\, :\, -1+\delta < x < 1-\delta \}$.  
By using (\ref{eqn:eigen-expansion}) in place of the Fourier sine decomposition,
one proves a Poincar\'e estimate on $B \setminus B_{\delta}$ 
analogous to Lemma \ref{lem:Poinc} 
with $\py$ replaced by $\nabla = \pr u\, \pr + r^{-2} \ptheta u\, \ptheta$.

For $u \in H^1_{0,m}(B)$ define 
$$ 
(D' u)(x,r, \theta)~
=~
\sum_{\lambda \in \spec_m(D_0)}\,
u_{\lambda}'(x)\,
\cdot\,
\phi^m_{\lambda}\left( \frac{r}{L(x)},\, \theta \right).
$$
and
$$
a_{h,m}(u)~
=~
\|h \cdot D'u\|^2\,
+\, 
\| \nabla  u\|^2
$$
where $\nabla u= \pr u\, \pr + r^{-2} \ptheta u\, \ptheta$. 
By using the method of proof of Lemma \ref{lem:asysep}, 
one finds that $q_{h,m}$ and $a_{h,m}$ are asymptotic at first order
in the sense of (\ref{eqn:asysep}) where $\Omega_{\delta}$ is 
replaced by $B_{\delta}$.

Following section \ref{sec:sum}, one  now 
defines an operator $\Acal_{h,m}$ associated with the quadratic form $a_{h,m}$ 
by identifying $e_k$ with $\phi^m_{\lambda_k}$
where $\lambda_k$ is the $k^{{\rm th}}$ element of $\spec_{m}(D_0)$. 
The Hilbert space $\Hcal$ is now $\Hcal=L^2( ]-1,1[, L^2(x)dx)$ and the operators $A_{h,m,k}$ 
are defined as 
\[
A_{h,m,k}v\,=\,-\frac{1}{L^2}(L^2 v')'\,+\,\frac{\lambda_k}{L^2}v.
\]

One proves the analogue of Proposition \ref{prop:ncsum} with 
$(k_0 \pi)^2$ and $((k_0+1) \pi)^2$ replaced by successive elements
$\lambda_{k_0} < \lambda_{k_0+1}$ in $\spec_m(D_0)$.

Given the analogues of Proposition \ref{prop:ncsum}, 
Lemma \ref{lem:Poinc}, and Lemma \ref{lem:asysep}, the proof of \ref{prop:limits-ball}
follows the same lines as does the proof of Theorem \ref{thm:limits}.
\end{proof}

\begin{proof}[\bf Proof of Proposition \ref{prop:circular}]
Given Proposition \ref{prop:limits-ball} and well-known facts 
about the Dirichlet spectrum of the unit ball, much of the proof 
follows the same outline as the proof of Theorem \ref{thm:ellipse-simple}.
As described there, the `construction' of the set $\Ccal$ 
will follow from proving that the various
analytic eigenvalue branches of $q_h$ are all distinct. 

In the present case, Kato-Rellich theory implies that 
there exist analytic branches $h \mapsto E_{h,\ell}^m \in \R$ and 
$h \mapsto u_{h,\ell}^m \in H^1_{0,m}$ so that, for each $h \in \op 0,1]$, the set 
$\{u_{h, \ell}^m\, :\,   \ell \in \N^* \}$ is a Hilbertian basis 
for $L^2_m(B)$ and so that for each $\ell \in \N^*$ the 
function $u_{h,\ell}^m$ is an eigenfunction 
of $q_{h,m}$ relative to $\|\cdot \|^2$ with eigenvalue $E_{h,\ell}^m$.
Bourget's hypothesis implies 
that if $|m| \neq |m'|$ then  $\spec_m(D_0) \cap \spec_{m'}(D_0) =\emptyset$ 
(see Appendix \ref{sec:disk}). Thus Lemma \ref{prop:limits-ball}
implies that if $|m| \neq |m'|$, then for every $\ell, \ell' \in \N^*$,
the branch $E_{h,\ell}^m$ is distinct from the branch $E_{h,\ell'}^{m'}$. 

Suppose for contradiction
that $q_h$ has two distinct analytic eigenfunction branches $u_h$ and $u_h'$
whose corresponding eigenvalue branches coincide for all $h$. 
Then the preceding paragraph implies that there exist $m$, $m'$, $\ell$, and $\ell'$
so that $|m|=|m'|$, $u_h = u_{h,\ell}^m$ and $u_h' = u_{h,\ell'}^{m'}$. 

The functions $u_1$ and $u_1'$ lie in the same eigenspace $V_1$ of 
the Dirichlet Laplace operator on the unit ball.
The intersection of $H^1_{0,m}(B)$ with any such eigenspace 
is one-dimensional (see Appendix \ref{sec:disk}). 
Hence we cannot have $m' = m$. Therefore,  $m' = -m$,
the function $u_1$ spans $V_1 \cap H^1_{0,m}(B)$, and $u_1'$ 
spans $V_1 \cap H^1_{0,-m}(B)$.

From (\ref{eqn:H_m}), one sees that complex conjugation defines 
a unitary isomorphism from $H^1_{0,m}(B)$ onto $H^1_{0,-m}(B)$.
Because $q_h(\overline{v})= q_h(v)$, if $(E_h,u_h)$ is an analytic eigenbranch 
for $q_{h,m}$, then $(E_h,\overline{u}_h)$ is an analytic 
eigenbranch for $q_{h,-m}$. In particular, we may assume that 
$(E_{h,\ell}^{-m}, u_{h,\ell}^{-m})= (E_{h,\ell}^{m}, \overline{u}_{h,\ell}^{m})$
for each $\ell$, $h$, and $m$.

Since $u_1'$ spans $V_1 \cap H^1_{0,-m}(B)$,
it follows that $u_1' = c \overline{u}_1$ for some constant $c$.  
Hence, without loss of generality, $u_h'= \overline{u}_h$ for each $h$.
It follows that $\ell=\ell'$ and 
one can find real eigenfunctions  $u_+$ and 
$u_- \in V_1 \cap (H^1_{0,m}(B) \oplus  H^1_{0,-m}(B))$
that satisfy $u_{\pm}(x,y,-z)= \pm u_{\pm}(x,y,z)$.
\end{proof}

\begin{thm} 
There exists a set of $\Ccal \subset \op 0, 1] \times \op 0, 1]$ of
Lebesgue measure zero so that if $(h, h') \notin \Ccal$, then 
each eigenspace of the Dirichlet  Laplacian on $\Ebb_{1/h,1/h',1}$ is 
one-dimensional.

\end{thm}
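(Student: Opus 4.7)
The plan is to combine Proposition \ref{prop:circular} with Theorem \ref{thm:ellipse-simple} through a fiberwise Fubini argument. Under the pullback $(x, y, z) \mapsto (x/h, y/h', z)$, the Dirichlet spectrum of $\Ebb_{1/h, 1/h', 1}$ coincides (up to the usual normalization) with the spectrum of
\[
q_{h, h'}(u)\, :=\, \int_B \bigl( h^2\, |\px u|^2\, +\, (h')^2\, |\py u|^2\, +\, |\partial_z u|^2 \bigr)\, dx\, dy\, dz
\]
on $H^1_0(B)$ relative to $\|\cdot\|^2$, where $B$ is the unit ball. The reflection $R_z: z \mapsto -z$ commutes with $q_{h, h'}$ and decomposes $H^1_0(B) = H^1_{0, +}(B) \oplus H^1_{0, -}(B)$ into $\pm$-eigenspaces. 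Applying Kato-Rellich theory separately within each sector produces, for each fixed $h' \in \op 0, 1]$, real-analytic eigenvalue branches $h \mapsto E^{\pm}_\ell(h; h')$ on $\op 0, 1]$, each of definite $R_z$-parity. It suffices to show that for all $h'$ outside a countable subset of $\op 0, 1]$, these branches are pairwise distinct as analytic functions of $h$; the slice of the exceptional set $\Ccal$ over such $h'$ is then discrete, and a standard Fubini argument yields $|\Ccal| = 0$.

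For the \emph{same-sector} distinction, observe that the swap of $x$- and $y$-axes gives an isometry between $\Ebb_{1, 1/h', 1}$ and $\Ebb_{1/h', 1, 1}$ which commutes with $R_z$. Proposition \ref{prop:circular} thus provides a countable set $\Ccal_1 \subset \op 0, 1]$ such that, for $h' \notin \Ccal_1$, every eigenspace of $q_{1, h'}$ is at most two-dimensional and, when two-dimensional, is spanned by one $R_z$-even and one $R_z$-odd function. Hence for $h' \notin \Ccal_1$, the restriction of $q_{1, h'}$ to each $R_z$-sector has simple spectrum, so the branches $h \mapsto E^{\pm}_\ell(h; h')$ within each sector take pairwise distinct values at $h = 1$ and are therefore pairwise distinct as real-analytic functions of $h \in \op 0, 1]$.

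For the \emph{cross-sector} distinction, we prove the following analog of Proposition \ref{prop:limits-ball}: for each $h' \in \op 0, 1]$ and each analytic branch,
\[
\lim_{h \to 0} E^{\pm}_\ell(h; h')~=~\mu_{k(\ell)}(h'),
\]
where $\{\mu_k(h')\}_{k \geq 1}$ are the Dirichlet eigenvalues of the cross-sectional ellipse $\Ecal_{h'} := \{(y, z) \in \R^2 : (h'y)^2 + z^2 < 1\}$, and the associated $L^2$-normalized eigenfunction $\phi_{k(\ell)}^0$ has the same $R_z$-parity as the branch. The proof follows the outline of Proposition \ref{prop:limits-ball}, using the decomposition
\[
u(x, y, z)~=~\sum_{k \geq 1}\, u_k(x) \cdot L(x)^{-1}\, \phi_k^0\!\left( \tfrac{y}{L(x)}, \tfrac{z}{L(x)} \right),
\]
with $L(x) = \sqrt{1 - x^2}$, in place of the cylindrical Bessel expansion; $\spec_m(D_0)$ is thus replaced by $\spec(\Ecal_{h'})$ in (the analog of) Proposition \ref{prop:ncsum}, and Lemmas \ref{lem:Poinc} and \ref{lem:asysep} carry over to the three-dimensional setting. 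Parity matching is automatic since $R_z$ acts only on the cross-sectional factor $\phi_k^0$. By Theorem \ref{thm:ellipse-simple}, there is a countable set $\Ccal_2 \subset \op 0, 1]$ such that for $h' \notin \Ccal_2$ the $\mu_k(h')$ are all simple, and each $\phi_k^0$ has definite $R_z$-parity. For such $h'$, the limits of $R_z$-even branches and $R_z$-odd branches lie in disjoint subsets of $\{\mu_k(h')\}$, so no cross-sector pair can coincide identically. Combined with the same-sector distinction, this shows that for $h' \notin \Ccal_1 \cup \Ccal_2$ all analytic branches are pairwise distinct, yielding $|\Ccal| = 0$ by Fubini. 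The main technical obstacle is the three-dimensional adaptation of the machinery of \S \ref{sec:line}-\ref{sec:sum} with an elliptic cross-section; however, since only completeness of the eigenbasis and the scaling behavior of eigenvalues under cross-section dilation are used, this adaptation is essentially mechanical.
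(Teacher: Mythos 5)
Your proof is correct and takes essentially the same approach as the paper: decompose by the reflection $R_z: (x,y,z) \mapsto (x,y,-z)$, separate same-parity branches via Proposition~\ref{prop:circular} (with $x$ and $y$ swapped) at $h=1$, separate cross-parity branches via the $h \to 0$ limits in $\spec_{\pm}(\Ecal_{h'})$ together with Theorem~\ref{thm:ellipse-simple}, and conclude by analyticity. The only cosmetic difference is in the final measure-theoretic bookkeeping --- you establish countability of the bad set on a.e.\ horizontal slice $\{h' = \text{const}\}$ and invoke Fubini, whereas the paper fixes a single good $h_0$, produces one simple point $(h, h_0)$, and sweeps lines through it --- and in the harmless normalization $L(x)^{-1}\phi_k^0(y/L(x), z/L(x))$, where the cross-sectional eigenfunctions $\phi_k^0$ should properly be understood as eigenfunctions of $-(h')^2\partial_y^2 - \partial_z^2$ on the unit disk (whose eigenvalues agree with $\spec(\Ecal_{h'})$ after rescaling), not literally as functions on the ellipse.
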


\begin{proof}
Let $\Ecal_h$ denote the ellipse $\{ (y, z)\, :\,  (h\, y)^2 + z^2 < 1\}$.
By Theorem \ref{thm:ellipse-simple} and 
Proposition \ref{prop:circular}, there exists $h_0 \in \op 0, 1]$  so that 
the Dirichlet spectrum of $\Ecal_{h_0}$ is simple and so that
each Dirichlet eigenspace $V$ of the ellipsoid 
$\Ebb_{1,1/h_0,1}$ is at most 2-dimensional (exchanging the $x$ and $y$ coordinates is harmless), and moreover,
if $\dim(V)=2$, then $V={\rm span} \{u_+,u_-\}$ where $u_{\pm}(x,y,-z)= \pm u(x,y,z)$.
We fix $h_0$ and consider the family of domains $\Omega_{h}:= \Ebb_{1/h,1/h_0,1}$. 
It suffices to show that there exists some $h \in \op 0, 1]$ so that $\Omega_h$ has 
simple Dirichlet Laplace spectrum. 
Indeed, analyticity of the spectrum would then imply that for each line $\ell$
in $\op 0, 1] \times \op 0, 1]$ that passes through $(h,h_0)$, there exists
a countable subset $\Ccal_{\ell} \subset \ell$ such that if 
$(g, g') \in \ell \setminus \Ccal_{\ell}$, then $\Ebb_{1/g,1/g',1}$ has simple spectrum. 
Then $\Ccal = \cup_{\ell}\, \Ccal_{\ell}$ would be the desired set of measure zero.  

Let $\Omega:= \Omega_{1}=\Ebb_{1,1/h_0,1}$ 
and consider the quadratic form $q_h$ of (\ref{eqn:D-form})
on $H^1_0(\Omega)$. Let $q_{h,\pm}$ denote the restriction of $q_h$ to 
the space $H^1_{0,\pm}(\Omega)= \{ u \in H^1_0(\Omega)\, :\, u(x,y,-z)= \pm u(x,y,z)\}$.
Let  $\{\phi_{\lambda}^{\pm}\}$ be a Hilbertian basis of 
Dirichlet eigenfunctions 
of the ellipse $\Ecal_{h_0}$
such that $\phi_{\lambda}^{\pm}(y,-z)= \pm\, \phi_{\lambda}^{\pm}(y,z)$. 
Let $\spec_{\pm}(\Ecal_{h_0})$ denote the set of (simple) eigenvalues associated to the 
eigenfunctions $\phi^{\pm}_{\lambda}$.
The now familiar method that was used to prove Theorem \ref{thm:limits} 
and Proposition \ref{prop:limits-ball} may be used to prove 
that each real-analytic eigenvalue branch $E_h$ of $q_{h, \pm}$ 
limits to an element of $\spec_{\pm}(\Ecal_{h_0})$.

The idea used to prove Theorem \ref{thm:ellipse-simple}
can now be used to prove that there exists a countable set $\Ccal' \in \op 0,1]$ so that 
if $h \notin \Ccal'$, then the spectrum of $q_h$ (and hence $\Omega_h$) is simple. 
Indeed, each analytic eigenvalue branch of $q_h$ on $H^1_0(\Omega)$ is either 
an eigenvalue branch of $q_{h,-}$ or an 
eigenvalue branch of $q_{h,+}$.  Because $\spec(\Ecal_{h_0})$ is simple, we have 
$\spec_{-}(\Ecal_{h_0}) \cap \spec_{+}(\Ecal_{h_0})= \emptyset$.
Therefore, since the $q_{h,\pm}$
branches limit to an element of $\spec_{\pm}(\Ecal_{h_0})$, 
the $q_{h,-}$ branches are distinct from the 
$q_{h, +}$ branches.  In particular, if two distinct 
eigenfunction branches $u_h$ and $u_{h'}$ of $q_h$ were to 
have coincident eigenvalue branches, then either each branch is a branch of $q_{h,-}$
or each is a branch of $q_{h,+}$. Thus, $u_1$ and $u_1'$ would either  be
two independent `odd' Dirichlet eigenfunctions of 
$\Ebb_{1,1/h_0,1}$ or would
be two independent `even' Dirichlet eigenfunctions of $\Ebb_{1,1/h_0,1}$.
This would contradict the choice of $h_0$ made above. 

Therefore, no two analytic eigenvalue branches of $q_h$ coincide, 
and so analyticity provides the existence of the desired set $\Ccal'$.
The spectrum of $q_h$ on $\Omega$ is the same as the spectrum of
the Dirichlet Laplacian on $\Ebb_{1/h,1/h_0,1}$, and thus the latter is simple
for some $h \in \op 0, 1]$.
\end{proof}

\begin{remk}
We observe that the strategy of the present paper could apply to study the quadratic form $q_h$ on a domain $\Omega \subset \R^N$ 
that satisfy the following property for a profile function $L$:
\[
(x_1,\cdots,x_N)\in \Omega 
\iff 
\left(
0, \frac{x_2}{L(x)}, \cdots,\frac{x_N}{L(x)}
\right)
\in 
\Omega.
\]
\end{remk}

\newpage 


\appendix
\section{The spectrum of the unit disk and the unit ball}
\label{sec:disk}
The Dirichlet and Neumann spectra of the unit disk $D$ may 
be computed using separation of variables and Bessel functions. 
A Hilbertian basis of eigenfunctions of $q_1$ restricted to 
$H^1_0(D) \cap L^2_{{\rm odd}}(D)$ is given by $J_k(\sqrt{E}\, r)\sin(k\theta)$ where $J_k$ is the standard Bessel and $\sqrt{E}$ is a zero of $J_k.$
A Hilbertian basis of eigenfunctions of $q_1$ on $H^1(\Omega)(D) \cap 
L^2_{{\rm even}}(D)$
is given by $J_k(\sqrt{E}\, r)\cos(k\theta).$ 
It follows that the part of the spectrum of $q_1^N$ and $q_1^D$ that correspond to positive $k$ coincide, so that 
the multiplicity of an eigenfunction that has a non-radial corresponding eigenfunction is at least $2$.

The following theorem is a well-known corollary of Siegel's work \cite{Siegel}.

\begin{thm}[Bourget's hypothesis]
\label{thm:Siegel}
If $k$ and $k'$ are distinct integers, then the Bessel functions $J_k$ and $J_{k'}$ 
have no common zeros other than $0$. 
\end{thm}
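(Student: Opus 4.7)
The plan is to deduce Bourget's hypothesis from Siegel's transcendence theorem for Bessel functions: for any nonzero algebraic number $\alpha$ and any $k \in \Z$, the value $J_k(\alpha)$ is transcendental (see \cite{Siegel}). Since $0$ is algebraic, every nonzero zero of $J_k$ must itself be transcendental. I would argue by contradiction: assuming $\alpha \ne 0$ is a common zero of $J_k$ and $J_{k'}$ for distinct integers $k, k'$, the goal is to show that $\alpha$ is forced to be algebraic.

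The algebraic link between $J_{k'}$ and the pair $(J_k, J_k')$ comes from the standard Bessel recurrences
\[
J_{\nu+1}(x)\,=\,\frac{2\nu}{x}\, J_{\nu}(x)\,-\,J_{\nu-1}(x), \qquad J_{\nu-1}(x)\,=\,J_{\nu}'(x)\,+\,\frac{\nu}{x}\, J_{\nu}(x).
\]
Iterating these starting from $(J_k, J_k')$ and stepping up or down to the index $k'$ yields an expression
\[
J_{k'}(x)\,=\,P(x)\, J_k(x)\,+\,Q(x)\, J_k'(x)
\]
with $P, Q \in \mathbb{Q}(x)$. The sequence $Q_\nu$ arising in this way satisfies $Q_{\nu+1} = (2\nu/x)\, Q_\nu - Q_{\nu-1}$ with initial values $Q_k = 0$ and $Q_{k+1} = -1$, and is, up to sign, the sequence of classical Lommel polynomials in $1/x$. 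A short induction on $|k'-k|$, tracking the degree in $1/x$, shows that $Q \not\equiv 0$ as soon as $k' \ne k$.

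Substituting $x = \alpha$ and using $J_k(\alpha) = J_{k'}(\alpha) = 0$ gives $Q(\alpha)\, J_k'(\alpha) = 0$. Because the nonzero zeros of $J_k$ are simple, $J_k'(\alpha) \ne 0$, so $Q(\alpha) = 0$. Since $Q$ is a nonzero rational function with rational coefficients, $\alpha$ must be algebraic, which contradicts Siegel's theorem applied to $J_k(\alpha) = 0$. The main obstacle is the appeal to Siegel's deep transcendence result, whose proof uses his method of $E$-functions and is well outside the scope of this paper; I would therefore simply cite \cite{Siegel} as the statement in the paper already does. The rest is a routine manipulation of Bessel identities, with the only mildly subtle step being the verification that $Q \not\equiv 0$, which the inductive argument above supplies.
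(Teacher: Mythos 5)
Your argument is essentially the paper's: it combines a Lommel-type recurrence to algebraically link $J_{k'}$ to the data at index $k$, the simplicity of nonzero zeros of $J_k$ (equivalently, that $J_k$ and $J_{k-1}$ have no common nonzero zero, which is what the paper cites from Watson and is what your ``simple zeros'' claim amounts to via $J_{k-1}=J_k'+\tfrac{k}{x}J_k$), and Siegel's transcendence theorem to conclude that the putative common zero cannot be algebraic. The only cosmetic difference is your choice of basis pair $(J_k,J_k')$ rather than the paper's $(J_k,J_{k-1})$, which is just a triangular change of variables. One small caution on your induction: the claim that $Q\not\equiv 0$ ``as soon as $k'\neq k$'' is not true in full generality because the degree in $1/x$ fails to increase at the step $\nu=0$; for instance with $k=-2$ one finds $Q_2\equiv 0$, reflecting the identity $J_{-k}=(-1)^kJ_k$, which also shows that the theorem as literally stated fails for $k'=-k$. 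For $k,k'\geq 0$ (the range actually used in the paper) the recurrence coefficient $2\nu$ is nonzero for all $\nu\geq 1$, the degree is strictly increasing, and your argument is correct.
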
  
 
\begin{proof}
See \S 15.28 of \cite{Watson}.\footnote{Bourget's hypothesis was presented as a
conjecture in the first edition of \cite{Watson}, and later was presented as a theorem in the second edition.}
If $k'>k$, then a recurrence relation implies 
that $J_{k'}(z)= p(1/z)\, J_k(z) + q(1/z)\, J_{k-1}(z)$
where $p,q$ are polynomials\footnote{The polynomials $p$ and $q$ are called {\it Lommel polynomials}.}
with rational coefficients.
It is known that $J_k$ and $J_{k-1}$ have no common zeros other than zero \cite{Watson}.
Thus, if $J_{k}$ and $J_{k'}$ were to have a common zero $z_0 \neq 0$, then 
$q(1/z_0)=0$.
Hence $z_0$ is an algebraic number and Siegel's theorem \cite{Siegel}
would imply that $J_k(z_0)$ is not algebraic.
But $J_k(z_0)=0$, a contradiction.
\end{proof}

Bourget's hypothesis has the following corollary.
Recall from \S \ref{sec:symmetric}, the notion of `even' and `odd' function
with respect to the symmetry $(x,y) \mapsto (x, -y)$.

\begin{coro}
\label{thm:struct}
The dimension of each eigenspace $V$ of $q_1$ on $H^1(D)$ is at most two.
If $dim(V)=1$, then each $u \in V$ is a radial function (hence 'even'). 
If $dim(V)=2$, then $V$ is spanned by an `even' function and an `odd' function.
\end{coro}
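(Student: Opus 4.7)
The plan is to read off the eigenspace structure directly from the separation-of-variables description recalled just before Theorem \ref{thm:Siegel}, and then invoke Bourget's hypothesis to rule out any mixing between Bessel orders.

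First, I would recall the complete angular decomposition of $L^2(D)$. Any $u \in L^2(D)$ decomposes in polar coordinates as
\[
u(r,\theta) \;=\; a_0(r)\, +\, \sum_{k\geq 1} \bigl( a_k(r)\cos(k\theta)\, +\, b_k(r)\sin(k\theta)\bigr),
\]
and this decomposition is preserved by the Dirichlet Laplacian because the Laplacian commutes with rotations and with the reflection $(x,y)\mapsto(x,-y)$. In each angular sector the eigenvalue problem reduces to a Bessel Sturm--Liouville problem on $[0,1]$, for which a Hilbertian basis of eigenfunctions in the $k$-th sector is given by $J_k(\sqrt{E}\, r)\cos(k\theta)$ (or $\sin(k\theta)$ if $k\geq 1$) with $\sqrt{E}$ ranging over the positive zeros of $J_k$. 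Hence any eigenvalue $E$ of $q_1$ on $H^1_0(D)$ determines a nonempty set
\[
\mathcal{K}(E)\;=\;\{\, k \in \N\cup\{0\}\, :\, J_k(\sqrt{E}) = 0 \,\},
\]
and the eigenspace $V(E)$ splits as an orthogonal direct sum indexed by $\mathcal{K}(E)$, with each sector contributing dimension one when $k=0$ (the single radial function $J_0(\sqrt{E}\, r)$) and dimension two when $k\geq 1$ (the even function $J_k(\sqrt{E}\, r)\cos(k\theta)$ and the odd function $J_k(\sqrt{E}\, r)\sin(k\theta)$).

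Second, I would apply Theorem \ref{thm:Siegel}: since $\sqrt{E} > 0$, it cannot be a common zero of two distinct Bessel functions $J_k$ and $J_{k'}$. Therefore $\mathcal{K}(E)$ is a singleton $\{k\}$, and the eigenspace $V(E)$ is exactly the contribution from that single sector.

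Finally, I read off the trichotomy. If $k=0$, then $\dim V(E)=1$ and $V(E)$ is spanned by the radial (hence even) function $J_0(\sqrt{E}\, r)$. If $k \geq 1$, then $\dim V(E) = 2$ and $V(E)$ is spanned by the even function $J_k(\sqrt{E}\, r)\cos(k\theta)$ and the odd function $J_k(\sqrt{E}\, r)\sin(k\theta)$. This is exactly the content of the corollary. There is essentially no obstacle here: the only thing to be careful about is the completeness of the Bessel-angular basis, which is standard, and the fact that Bourget's hypothesis applies only to nonzero zeros, which is consistent with $\sqrt{E}>0$ for an eigenvalue of the Dirichlet Laplacian.
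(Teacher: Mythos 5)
Your proof is correct and follows essentially the same route as the paper: both decompose the Dirichlet eigenspaces by angular Fourier mode via separation of variables, note that each eigenvalue comes from zeros of Bessel functions $J_k$, and then invoke Bourget's hypothesis (Theorem \ref{thm:Siegel}) to conclude that only a single Bessel order $k$ can contribute, which immediately yields the trichotomy. The only difference is that you spell out the orthogonal decomposition a bit more explicitly with the set $\mathcal{K}(E)$, whereas the paper phrases the same point as disjointness of the sets $Z_k$.
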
 

\begin{proof}
For each $k$, let $Z_k$ be the set of $E>0$ so that $J_{k}\left(\sqrt{E}\right)=0$.
Theorem \ref{thm:Siegel} implies that the sets $Z_k$ are disjoint.
By the discussion before Theorem \ref{thm:Siegel}, the spectrum of $q_1$ 
equals the union $\bigcup_{k\geq 1} Z_k$. Moreover, if $k=0$, then for 
each $E \in Z_k$, the space is spanned by $J_0(\sqrt{E} \cdot r)$
which is radial. If $k>0$ and $E \in Z_k$, the space is spanned 
by $J_k(\sqrt{E} \cdot r) \sin(\theta)$
which `odd' and $J_k(\sqrt{E} \cdot r) \cos(\theta)$ which is `even'.
\end{proof}

To describe the eigenspaces of the Dirichlet Laplace operator on the unit 
ball $B \subset \Rbb^3$, we use the harmonic polynomials $P_{\ell}$
of degree $\ell$ on $\Rbb^3$. To describe a useful basis, for $P_{\ell}$
we use the differential operators associated to rotating about 
the $x$, $y$, and $z$-axes:
\begin{eqnarray*}
R_x &=& -z \partial_y + y\partial_z, \\
R_y &=& -x \partial_z + z\partial_x,  \\
R_z &=& -y \partial_x + x\partial_y.
\end{eqnarray*}
The `ladder operators'  $L:= R_y + i R_z$ and $\oL := R_y - i R_z$
preserve $P_{\ell}$. 
A computation shows that the polynomial $(y-iz)^{\ell}$ is harmonic. 
If we define $Y_{\ell,k}:= L^k (y-iz)^{\ell}$, then for 
each $k$, we have 
\begin{equation}
\label{eqn:Y-eigen}
R_x Y_{\ell,k}= -i(\ell-k) \cdot Y_{\ell,k}.
\end{equation}
Moreover, the set 
$$
\dis
\left\{
Y_{\ell,k}\,
:\,
k= 0, 1,\ldots, 2 \ell
\right\}
$$
is a basis for $P_\ell$.

In terms of  spherical coordinates $(r, \omega)$ for $B$, 
a  Hilbertian basis of Dirichlet eigenfunctions 
is given by 
$$
\phi_{\lambda,\ell,k}(r,\omega)~
:=~
r^{-\und}\, J_{\ell+ \und}(\sqrt{\lambda} r) \cdot Y_{\ell,k}(\omega)
$$
where $\ell \in \N$, $k= 0, \cdots, 2\ell$, and 
$J_{\ell+ \und}(\sqrt{\lambda})=0$. Bourget's hypothesis also 
holds for Bessel functions of half-integer order, and so
the nonzero zeros of $J_{\ell+ \und}$ 
are distinct from the nonzero zeros of $J_{\ell'+ \und}$ 
if $\ell \neq \ell'$.  The eigenspace $V$ associated 
to the eigenvalue $\lambda$ where $J_{\ell+ \und}(\sqrt{\lambda})=0$ has basis 
$\{\phi_{\lambda,\ell,k}\, :\, k=0, \ldots, 2 \ell \}$.

Recall the subspace $H^1_{0,m}(B)$ defined in $(\ref{eqn:H_m})$. 

\begin{lem}
Let $V$ be an eigenspace of the Dirichlet Laplace operator on the unit ball
associated to a zero of $J_{\ell + \und}$. For $|m|\leq \ell$, the 
dimension of $V \cap H^1_{0,m}(B)$ equals one. 
\end{lem}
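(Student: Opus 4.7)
The plan is to decompose $V$ using the joint structure of the basis $\{\phi_{\lambda,\ell,k}\}_{k=0}^{2\ell}$ and the infinitesimal generator $R_x$ of rotations about the $x$-axis. First I would verify that, for smooth $u$, the equivariance condition $u\circ R_\alpha = e^{im\alpha}\, u$ defining $H^1_{0,m}(B)$ is equivalent to the eigenvalue equation $R_x u = im\, u$, since $R_x$ is obtained by differentiating the one-parameter group $R_\alpha$ at $\alpha = 0$. This rephrases membership in $H^1_{0,m}(B)$ as an $R_x$-eigenvalue condition.

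Next, because the radial factor $r^{-1/2} J_{\ell + 1/2}(\sqrt{\lambda}\, r)$ depends only on $r$ and $R_x$ is purely tangential to the spheres, the formula $\phi_{\lambda,\ell,k}(r,\omega) = r^{-1/2} J_{\ell+1/2}(\sqrt{\lambda}\, r)\, Y_{\ell,k}(\omega)$ together with equation \eqref{eqn:Y-eigen} yields
\[
R_x\, \phi_{\lambda,\ell,k}\, =\, -i(\ell - k)\, \phi_{\lambda,\ell,k}.
\]
Thus each basis vector of $V$ lies in exactly one space $H^1_{0,m}(B)$, namely the one with $m = k - \ell$. For $|m|\leq \ell$, the equation $k = \ell + m$ has a unique solution in $\{0, 1, \ldots, 2\ell\}$, producing a single basis vector $\phi_{\lambda,\ell,\ell+m}$ of $V$ inside $H^1_{0,m}(B)$. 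Any $u\in V$ has a unique expansion $u = \sum_k c_k\, \phi_{\lambda,\ell,k}$ in $R_x$-eigenvectors with pairwise distinct eigenvalues, so the equation $R_x u = im\, u$ forces $c_k = 0$ whenever $k \neq \ell + m$. Hence $V \cap H^1_{0,m}(B) = \mathbf{C}\cdot \phi_{\lambda,\ell,\ell+m}$ is one-dimensional.

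There is no serious analytic obstacle: the lemma is a bookkeeping check matching the character-index $m$ against the $R_x$-eigenvalue labeling $-i(\ell - k)$ of the harmonics $Y_{\ell,k}$. The only point to watch is the sign convention that identifies $-i(\ell - k)$ with $im$, giving $m = k - \ell$, and noting that the constraint $|m|\leq \ell$ is exactly what places $k = \ell + m$ in the admissible range $\{0,1,\ldots,2\ell\}$, so that a unique basis element survives in the intersection.
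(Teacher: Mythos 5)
Your argument is correct and is essentially the same one the paper uses: the paper's one-line proof simply cites \eqref{eqn:Y-eigen} to note that $Y_{\ell,\ell+m}$ lies in $H^1_{0,m}(B)$, and you have spelled out the bookkeeping behind that citation — identifying $R_x = \partial_\theta$ as the infinitesimal generator of $R_\alpha$, matching the eigenvalue $-i(\ell-k)$ with $im$, and using the distinctness of the $R_x$-eigenvalues on the basis $\{\phi_{\lambda,\ell,k}\}_{k=0}^{2\ell}$ to get uniqueness. Your version makes explicit the ``at most one-dimensional'' half that the paper leaves implicit, but the underlying idea is identical.
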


\begin{proof}
It follows from (\ref{eqn:Y-eigen}) that the function $Y_{\ell,\ell+m}$ 
belongs to $H^1_{0,m}(B)$. 
\end{proof}

\newpage

\section{The Neumann case}
\label{sec:neu}

The purpose of this appendix is to prove the analogues of Theorems \ref{thm:limits} 
and \ref{thm:limits-mixed} in the full Neumann
case. Namely, we prove the following theorem, 
in which we use the notation of section \ref{sec:shrink}. 

\begin{thm}[Full Neumann]
\label{thm:limits-Neumann}
For any analytic eigenvalue branch $E_h$ of $q_h$ on $H^1(\Omega)$ relative to 
$\|\cdot\|^2$, there exists $k\in \N\,=\,\N^*\cup \{0\}$ such that 
\[
\lim_{h \to 0}\,  E_h~ =~ \left( \frac{k\, \pi}{L(0)} \right)^2.
\]
\end{thm}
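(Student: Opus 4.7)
The plan is to mimic the proof of Theorem \ref{thm:limits}, with the only new ingredient being a treatment of the $k=0$ Fourier mode that is present in the Neumann setting but absent in the Dirichlet one. First, I would replace the Fourier sine decomposition (\ref{eqn:fourier}) by the Fourier cosine decomposition
\[
u(x,y)\,=\,u_0(x)\,+\,\sum_{k\geq 1} u_k(x)\cdot \sqrt{2}\cos\!\left(\frac{k\pi y}{L(x)}\right),
\]
which identifies $L^2(\Omega,dx\,dy)$ with $\ell^2(\N,\Hcal)$. The operator $\Acal_h$ of \S\ref{sec:sum} is extended by the summand $A_{0,h}\, u \,=\,-h^2\,L^{-1}(L u')'$ at $k=0$. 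Since $A_{0,h}$ has compact resolvent and is non-negative with the constants as kernel, $\spec A_{0,h}\subset[0,C h^2]$ for some $C$ independent of $h$. The analogue of Lemma \ref{lem:asysep} requires no real modification because the identity $\partial_x\phi_k = -yL'/L\,\partial_y\phi_k$ that drives the proof is shared by the sines and the cosines.

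Next, I would prove the Neumann analogue of Proposition \ref{prop:ncsum}. For $E$ bounded below by a positive constant $a$, the gap $a-Ch^2>0$ between $E$ and $\spec A_{0,h}$ gives the simple spectral estimate
\[
\|u_0\|_{H^1_{A_{0,h}}}\,\leq\, c_0\,\|(A_{0,h}-E)u_0\|_{H^{-1}_{A_{0,h}}},
\]
uniformly in small $h$; this is exactly of the form used for the modes $k\geq k_0+1$ in the original argument. Summing with the estimate from Proposition \ref{prop:ncinterval} for $1\leq k\leq k_0$ yields the non-concentration estimate on the full Neumann $\Acal_h$, with no support restriction on the $k=0$ component.

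For the main contradiction argument, assume $E_h\to E_0\notin\{(k\pi/L(0))^2:k\in\N\}$. The case $E_0=0$ is the statement itself with $k=0$, so I may assume $E_0>0$ and choose $[a,b]$ and $k_0\geq 0$ with $E_h\in[a,b]$, $a>0$, and $(k_0\pi/L(0))^2<a\leq b<((k_0+1)\pi/L(0))^2$. Split $u_h=u_{0,h}+u_{\perp,h}$ in the Fourier decomposition. Lemma \ref{lem:Poinc}, whose proof uses only the modes $k\geq 1$, applies to $u_{\perp,h}$ and gives $\|u_{\perp,h}\,\un_{J_\delta}\|^2\leq f(\delta)\,b\,\|u_h\|^2$. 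The key new step is
\[
\|u_{0,h}\|_{\Hcal}\,\leq\, C\,h\,\|u_h\|,
\]
which I would derive from the fact that $u_h$ is an $O(h)$-quasimode of $\Acal_h$ with eigenvalue $E_h$ (via the Neumann extension of Lemma \ref{lem:asysep}), by projecting the quasimode relation onto the $k=0$ summand and inverting $A_{0,h}-E_h$ using the spectral gap above. Combining the two estimates gives $\|u_h\,\un_{J_\delta}\|^2\leq \tfrac14 \|u_h\|^2$ for $\delta$ and $h$ small enough, and from this point the argument of Theorem \ref{thm:limits} transcribes verbatim: cut off $w_h=\chi u_h$ with $\chi\in\Cinf_0(I_\delta)$, deduce $\|(Q_h-E_h)w_h\|\lesssim h\|u_h\|$, transfer to $\|(\Acal_h-E_h)w_h\|_{H^{-1}_{\Acal_h}}\lesssim h\|u_h\|$ via Lemma \ref{lem:asysep}, apply the extended non-concentration estimate to $w_h$, control $\|h\,D'w_h\|$ by $\|h\,\partial_x u_h\|+h\|u_h\|$, and contradict the Feynman-Hellmann integrability condition (\ref{eqn:integrability}).

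The main obstacle will be making the smallness $\|u_{0,h}\|_{\Hcal}\leq Ch\|u_h\|$ fully rigorous. Testing $q_h(u_h,\varphi)=E_h\langle u_h,\varphi\rangle$ against a $y$-independent function $\varphi\in H^1([-1,1])\subset H^1(\Omega)$ and using $\int_0^{L(x)}u_{\perp,h}\,dy=0$ produces a boundary-trace coupling term of the form $h^2\int_{-1}^{1}\varphi'(x)\,L'(x)\,u_{\perp,h}(x,L(x))\,dx$ which must be absorbed into $h\|u_h\|\cdot\|\varphi\|_{H^1_{A_{0,h}}}$. Since $L'$ may blow up at $x=\pm 1$ (e.g.\ like $(1-x^2)^{-1/2}$ for the disk profile), this requires either first localizing the identity to $I_\delta$, where $L'/L$ is bounded, and showing separately that $\|u_{0,h}\,\un_{J_\delta}\|$ is small, or an integration by parts that exploits the vanishing of $L$ at $x=\pm 1$. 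Once this technical point is settled, every other ingredient is an immediate adaptation of the Dirichlet case.
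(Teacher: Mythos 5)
The proposal has a genuine gap at its central step. You claim that $A_{0,h}u = -h^2 L^{-1}(Lu')'$ satisfies $\spec A_{0,h}\subset[0,Ch^2]$, so that a fixed $E\in[a,b]$ with $a>0$ is at a distance bounded below away from the spectrum, yielding a simple resolvent bound $\|u_0\|_{H^1_{A_{0,h}}}\leq c_0\|(A_{0,h}-E)u_0\|_{H^{-1}_{A_{0,h}}}$ uniformly in $h$. This is false: the spectrum of $A_{0,h}$ is $h^2$ times the spectrum of the $h$-independent operator $-L^{-1}(Lu')'$, which is unbounded and discrete; so $\spec A_{0,h} = \{h^2\lambda_n : n\geq 0\}$ with $\lambda_n\to\infty$. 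For any fixed $E>0$ there are eigenvalues $h^2\lambda_n$ with $\lambda_n\approx E/h^2$, and the spacing of these near $E$ is of order $h$ (exactly as in equation (\ref{eqn:dist-est})), not of order $1$. There is no uniform gap, and the $k=0$ mode cannot be treated like the tail modes $k\geq k_0+1$ in Proposition \ref{prop:ncsum}. The claimed estimate $\|u_{0,h}\|_{\Hcal}\leq Ch\|u_h\|$ therefore does not follow, and in fact is not something one can establish directly: a priori the zeroth mode may carry a positive fraction of the mass of $u_h$ near the endpoints, and ruling this out is exactly the hard part.

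The paper's proof handles the zeroth mode very differently. In the central region it proves a bona fide semiclassical non-concentration estimate for the \emph{Dirichlet} realization of $-h^2\partial_x^2$ on $I_\delta$ by running the whole Bohr--Sommerfeld/Zelditch argument of Proposition \ref{prop:ncline} on the explicit spectrum $\{h^2k^2\pi^2/\ell^2\}$ (again with $O(h)$ gaps, not an $O(1)$ gap), and combines it with the modes $k\geq 1$ to obtain Proposition \ref{prop:controlestimate}. That only controls the mass in $\Omega_\delta$; the paper then still must rule out concentration of the zeroth mode in the ``wings'' $\Omega\setminus\Omega_\delta$, and does so by an entirely separate and rather delicate analysis: an $H^1$-type estimate for $v_h$ on $[0,\eps h]$ (Corollary \ref{coro:H1-bound}), and, on $[\eps h,\delta]$, a Liouville transformation $w=L^{1/2}v$ turning the zeroth-mode equation into a perturbed Bessel-type ODE, followed by variation of constants, asymptotics of $\psi_c,\psi_s$, and the comparison Lemma \ref{lem:overlap} to bound the homogeneous part. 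None of this appears in your proposal, and the obstacle you flag at the end (the boundary-trace coupling and the blowup of $L'$ at $x=\pm 1$) is indeed where the real difficulty lies: your suggested fixes (localize to $I_\delta$, or integrate by parts) do not by themselves resolve it, and the paper's Appendix \ref{sec:neu} is essentially devoted to resolving precisely this. So the broad strategy (cosine Fourier decomposition, extend $a_h$ and $\Acal_h$ by the $k=0$ summand, use Poincaré for $k\geq 1$) is aligned with the paper, but the treatment of the zeroth mode is wrong where it is easy and absent where it is hard.
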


As is obvious from the statement, the main difference between the full Neumann case and the boundary conditions considered in \S \ref{sec:shrink} is the
presence of a zeroth mode corresponding to $k=0$. Of course, the presence of this
mode follows from the fact that constant functions are in
the kernel of the Neumann Laplace operator in $\Omega$. Given  $v\in L^2(\Omega)$, we perform the transversal Fourier decomposition:

\[
  v\,=\,v_0\otimes \un \,+\,\sum_{k\geq 1} v_k\otimes e_k, 
\]
where 
\begin{eqnarray*}
  e_k(x,y)&=& \sqrt{2}\cos \frac{k\pi y}{L(x)}\\
 v_k(x)&=& \frac{1}{L(x)}\int_0^{L(x)} v(x,y)e_k(x,y)\,dy,\\
    v_0(x) &=& \frac{1}{L(x)}\int_0^{L(x)}v(x,y)\, dy.
\end{eqnarray*}

To prove Theorem \ref{thm:limits-Neumann}, we apply the same overall 
strategy as in the proof of Theorem \ref{thm:limits}. 
That is, we suppose for contradiction that a (normalized) analytic 
eigenbranch $(E_h,u_h)$ does not limit to one of the thresholds $(\pi k)^2/L(0)^2$, 
and we seek to contradict the resulting integrability condition (\ref{eqn:integrability}).
As we will see, the arguments that we used in the central region
$\Omega_\delta$ still work so that the integrability condition 
gives that, 
for any $\delta$ and any `horizontal' cutoff function
$\chi_{\delta}$, the quantity 
$\| \chi_\delta u_h\|$ is eventually, for $h$ small,  arbitrarily small.
This will imply that $u_h$ concentrates away of $\Omega_\delta$.
The Poincar\'e inequality is also still valid for $k\geq 1$ so that, 
for any $\delta$, the quantity  $\|(1-\chi_\delta)(u_h-u_{h,0})\|$ also tends to $0$.
As a consequence, it will follow that $u_h$ eventually has all of its mass supported on its zeroth mode near the points $x={\pm 1}$.
We will obtain a contradiction by making a careful study of the second order inhomogenous ODE satisfied by $u_{h,0}$.

\subsection{In the central region $\Omega_\delta$}\hfill \\
Recall the region $\Omega_{\delta}$ defined in \S \ref{sec:shrink}.
In this section, we fix $\delta \in (0,\frac{1}{2})$ and define $H^1_v(\Omega_\delta)$ to be the subspace of functions
in $H^1$ that vanish on the vertical sides $x=1-\delta$ and $x=-1+\delta$.

For each  $v\in H^1_v(\Omega_\delta)$,
we define
\[
  D'(v)\,=\,v'_0\otimes \un \,+\,\sum_{k\geq 1} v_k'\otimes e_k,
\]
and the quadratic form
\[
  a_h(v)\,=\,\sum_{k\geq 0} \int_{-1+\delta}^{1-\delta} \left[ h^2|v_k'(x)|^2\,+\,\frac{k^2\pi^2}{L^2(x)}|u(x)|^2 \right]\, L(x)dx.
\]
For $h$ small enough, using that $L'$ and $\frac{1}{L}$ are uniformly bounded on $\Omega_{\delta}$, we obtain the following lemma that implies
that $q_h$ and $a_h$ are asymptotic at first order on $H^1_v(\Omega_\delta)$

\begin{lem}
  For any $\delta$, there exists $C>0$ and $h_0$ such that, for any $h< h_0$ and any $v,w \in H^1_v(\Omega_\delta)$
  \[
    \left | q_h(v,w)-a_h(v,w) \right |\,\leq\,C\cdot h\cdot a_h(v)^{\und}a_h(w)^{\und}
  \]
\end{lem}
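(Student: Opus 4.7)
The plan is to adapt the proof of Lemma \ref{lem:asysep} from \S \ref{sec:shrink}, replacing the transverse sine decomposition by the cosine decomposition supplemented by the constant (zeroth) mode. The key observation is that the zeroth mode $v_0$ is annihilated by $\partial_y$ (and hence by any multiplication operator applied to $\partial_y v$), so it is transparent to the only non-trivial part of the argument and does not affect the bookkeeping.

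First, I would establish the key chain-rule identity. For $v \in H^1_v(\Omega_\delta)$ with cosine expansion $v(x,y) = v_0(x) + \sum_{k\geq 1} v_k(x)\sqrt{2}\cos(k\pi y/L(x))$, differentiating each transverse mode in $x$ produces a term $\sqrt{2}\sin(k\pi y/L(x))\cdot k\pi y L'(x)/L(x)^2$, which when summed against $v_k(x)$ reassembles as $-(yL'/L)\,\partial_y v$. This yields
\[
\partial_x v \,=\, D' v \,+\, T v, \qquad T v \,:=\, -\frac{y L'(x)}{L(x)}\,\partial_y v.
\]
The identity holds trivially on the zeroth mode, as both sides reduce there to $v_0'(x)$.

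Second, combining this identity with Parseval in the $y$-variable (which shows $\langle\partial_y v,\partial_y w\rangle_{L^2(\Omega_\delta)}$ coincides with the transverse part of $a_h(v,w)$), one gets
\[
q_h(v,w) - a_h(v,w) \,=\, h^2\bigl[\,\langle D' v, T w\rangle + \langle T v, D' w\rangle + \langle T v, T w\rangle\,\bigr],
\]
with the inner products taken in $L^2(\Omega_\delta)$.

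Third, on $\Omega_\delta$ the ratio $L'/L$ is uniformly bounded (since $L$ is smooth and positive on $I_\delta$) and $|y|\leq L(0)$, so $|Tv|\leq C_\delta\,|\partial_y v|$ pointwise. Cauchy--Schwarz applied to each of the three inner products, together with the standard bounds $h\|D' v\|\leq a_h(v)^{1/2}$ and $\|\partial_y v\|\leq a_h(v)^{1/2}$, delivers the estimate: each cross term is absorbed into $C_\delta\, h\, a_h(v)^{1/2}a_h(w)^{1/2}$ by peeling off one factor of $h$, and the $\langle Tv,Tw\rangle$ term gives $C_\delta^2 h^2\, a_h(v)^{1/2}a_h(w)^{1/2}$, which is $O(h)$ for $h\leq h_0$. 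The only point requiring care is that the chain-rule identity and the Parseval step must not rely on any boundary condition at $y=0$ or along the graph of $L$, which is where the Neumann setting departs from section \ref{sec:shrink}; a direct computation verifies this, the constant mode $v_0$ being exactly what makes the cosine expansion (and therefore the argument) valid without such boundary conditions.
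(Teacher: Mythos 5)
Your proof is correct and is exactly the argument the paper has in mind; the paper's own ``proof'' of this lemma is simply the instruction to repeat the argument of Lemma~\ref{lem:asysep}. Your clean formulation of the chain-rule identity as $\partial_x v = D'v + Tv$ with $Tv = -(yL'/L)\,\partial_y v$, so that $|Tv|$ is bounded pointwise by a constant times $|\partial_y v|$ alone (and not by $|v|$), is precisely the point that makes the estimate survive the passage to Neumann conditions, where $a_h(v)^{1/2}$ controls $\|\partial_y v\|$ and $h\|D'v\|$ but not the $L^2$ norm of the zeroth mode.
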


The proof is the same as the proof of Lemma \ref{lem:asysep}. Details are left to the reader.

Following the proof in the Dirichlet case, we introduce the abstract Hilbert space $\ell^2(\N, \Hcal_\delta)$ by formally identifying
$e_k$ with the canonical Hilbertian basis of $\ell^2(\N)$. This enables us to define an operator $\Acal_h$ that coincides  with the previous
one on the modes $k\geq 1$. On the zeroth mode, we have
\[
  A_{0,h}v_0\,=\,-h^2 \frac{1}{L}(Lv')'. 
\]

In order to prove the analogue of Proposition \ref{prop:ncline}, we thus only need to study the concentration estimate for this operator.
Actually, using that
\[
   A_{0,h}v_0\,=\,-h^2 v'' \,-\,h^2\frac{L'}{L}v',
\]
and the fact that we work in $I_\delta$, the following lemma is enough.

\begin{lem}
  Let $P_h$ be the Dirichlet realization of $-h^2v''$ in $L^2(I_\delta)$.
  For any $\eps$, and any compact set $K \subset ]0,+\infty[$, there exist a constant $C$ and $h_0$ such that
  if $h\in ]0,h_0]$, $E\in K$ and $v\in H^1_0(I_\delta)$
  \[
    \| v \|_{H^1_{P_h}} \,\leq\, \frac{\eps}{h} \| (P_h-E)v \|_{H^{-1}_{P_h}}\,+\,C \| hv'\|_{L^2(I_\delta)}.
  \]
\end{lem}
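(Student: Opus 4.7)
The plan is to follow the contradiction strategy of Proposition \ref{prop:ncline}, but in the present context everything becomes much more explicit because the spectrum of $P_h = -h^2\partial_x^2$ on $I_\delta = [-1+\delta, 1-\delta]$ with Dirichlet boundary conditions can be written down exactly. Setting $\ell := 2(1-\delta)$, the $L^2$-normalized eigenfunctions are $\psi_n(x) = \sqrt{2/\ell}\,\sin(n\pi(x+1-\delta)/\ell)$ with eigenvalues $\lambda_n = (n\pi h/\ell)^2$ for $n \geq 1$. Near any fixed energy $E_0 \in K$ the index $n$ satisfies $n \sim \sqrt{E_0}\,\ell/(\pi h)$, so the consecutive gaps $\lambda_{n+1}-\lambda_n = (2n+1)\pi^2 h^2/\ell^2$ obey a two-sided bound $c_1 h \leq \lambda_{n+1}-\lambda_n \leq c_2 h$ analogous to (\ref{eqn:dist-est}).

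First, assume the statement fails and extract, exactly as at the beginning of the proof of Proposition \ref{prop:ncline}, a subset $\Hbb \subset \op 0,\infty \cl$ accumulating at $0$ together with sequences $E_h \to E_0 \in K$, $\varepsilon_h \to 0$, and $v_h \in H^1_0(I_\delta)$ such that $v_h$ is an $O(h)$-quasimode for $P_h$ at energy $E_h$ while $\|hv_h'\|_{L^2} \leq \varepsilon_h \|v_h\|_{H^1_{P_h}}$. Choose $n_h$ with $\lambda_{n_h} \leq E_h < \lambda_{n_h+1}$ and, for an integer $N$ to be fixed large, split $v_h = w_h + r_h$, where $w_h$ is the spectral projection of $v_h$ onto $\mathrm{span}\{\psi_n : |n-n_h|\leq N\}$. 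The same remainder computation as in Proposition \ref{prop:ncline} gives
\[
\|r_h\|^2_{H^1_{P_h}} \,\leq\, \frac{c_3}{N^2}\,\|v_h\|^2_{H^1_{P_h}}
\]
for $h$ small and $N$ large.

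The main simplification compared to Proposition \ref{prop:ncline} is that the semiclassical-measure and Zelditch orthogonality input is replaced here by an explicit orthogonality. Indeed the derivatives $\psi_n'(x) = (n\pi/\ell)\sqrt{2/\ell}\,\cos(n\pi(x+1-\delta)/\ell)$ form an orthogonal family in $L^2(I_\delta)$, so expanding $v_h = \sum_n c_n \psi_n$ one immediately obtains
\[
\|h v_h'\|^2_{L^2} \,=\, \sum_{n} |c_n|^2 \lambda_n \,\geq\, \sum_{|n-n_h|\leq N} |c_n|^2 \lambda_n \,\geq\, (E_h - N c_2 h)\,\|w_h\|^2_{L^2}.
\]
Combined with $\|w_h\|^2_{H^1_{P_h}} \leq (b+2)\,\|w_h\|^2_{L^2}$ and the remainder bound, this forces $\|hv_h'\|^2_{L^2} \geq c\,(1 - c_3/N^2)\,\|v_h\|^2_{H^1_{P_h}}$ for $h$ small and $N$ large, contradicting $\varepsilon_h \to 0$.

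I do not foresee any serious obstacle: the three ingredients used in Proposition \ref{prop:ncline} (uniform spectral gaps of size $\Theta(h)$, a spectral cut-off with $O(1/N^2)$ remainder, and a uniform positive lower bound on $\|h w_h'\|^2_{L^2}$ in terms of $\|w_h\|^2$) are all available here in explicit form, and the last ingredient is strictly simpler than in Proposition \ref{prop:ncline} because it collapses to the orthogonality of the cosine basis rather than a semiclassical defect-measure analysis. The only care needed is bookkeeping the constants so that $N$ may be fixed independently of $h$.
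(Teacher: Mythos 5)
Your proposal is correct and takes essentially the same route as the paper: run the contradiction argument of Proposition~\ref{prop:ncline} using the explicitly known eigenvalues and eigenfunctions of $-h^2\partial_x^2$ on $I_\delta$, with the $\Theta(h)$ spectral gap, the spectral cut-off with $O(1/N^2)$ remainder, and a lower bound on $\|hv_h'\|^2$ in terms of $\|w_h\|^2_{L^2}$. The one nice extra observation you make — that the derivatives $\psi_n'$ are exactly orthogonal (cosines), so the matrix of $\Bcal_{h,N}$ in the eigenbasis is \emph{exactly} diagonal and the lower bound $\|hv_h'\|^2 = \sum_n \lambda_n|c_n|^2$ is immediate — streamlines the step the paper compresses into ``the asymptotics of $\Bcal$ are easily computed,'' eliminating any need for an asymptotic off-diagonal estimate. (Minor bookkeeping: the constant in $\lambda_{n_h-N} \geq E_h - (N+1)c_2 h$ should carry the extra $+1$ since $\lambda_{n_h}$ is only known to exceed $E_h - c_2 h$, but this does not affect the conclusion.)
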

\begin{proof}
  We follow the proof of Proposition \ref{prop:ncline}. The spectrum of $P_h$ is
  given by $\left\{ \frac{k^2\pi^2}{\ell^2},~k\geq 1 \right\}$ where 
  $\ell = 2-2\delta$ is the length of the interval $I_{\delta}$.
  We observe that if $k$ is such that 
  $\frac{h^2k^2\pi^2}{\ell^2} \in [a,b]\subset \op 0,+\infty \cl$ then $k$ is of order $1/h$
  so that the distance between two consecutive eigenvalues is also of order $1/h$ as desired (see Equation (\ref{eqn:dist-est})). Next, we need a description of the
  quadratic form $\Bcal_h$. The matrix element of the latter can be explicitly described
  since the eigenfunctions of $P_h$ are known:
  They are given by $x\mapsto \sin \left(\frac{k\pi}{\ell}(x+1-\delta) \right)$. In the interval $[a,b]$, the relevant $k$ is of order $1/h$ so that
  the asymptotics of $\Bcal$ are easily computed. 
  All of the arguments in the proof of Proposition \ref{prop:ncline}
  are then seen hold in the present case.
\end{proof}

By following the proof of Theorem \ref{thm:limits}, we are able to prove the following proposition.  We recall that a `threshold'  is an element of the set
\[
  \left\{ \frac{k^2\pi^2}{L^2(0)},~k\in \N
  \right\}.
\]

\begin{prop}
\label{prop:controlestimate}
  For any $\delta$, any $\eps$, and any interval $[a,b]$ that does not contain any threshold,
  there exists a constant $C$ and $h_0$ such that, for any $h<h_0$ and for any eigenfunction $u_h$ of $q_h$ whose eigenvalue $E_h$ is in $[a,b]$,
  we have
  \begin{equation}
  \label{eqn:central} 
    \| u_h\|_{L^2(\Omega_\delta)}\,
    \leq\, 
    \eps\, \|u_h\|_{L^2(\Omega)}\,
    +\,
    C\| h\, \partial_x u_h\|_{L^2(\Omega)}. 
  \end{equation}
\end{prop}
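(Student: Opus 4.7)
The plan is to adapt the scheme used in the proof of Theorem \ref{thm:limits}, now invoking the two lemmas just established in this appendix: the first-order asymptotic comparison of $q_h$ and $a_h$ on $H^1_v(\Omega_\delta)$, and the non-concentration estimate for the zeroth-mode operator. Fix $\eps>0$, $\delta\in (0,\tfrac12)$, and an interval $[a,b]$ disjoint from all thresholds, and pick $k_0 \in \N$ with $[a,b] \subset \big((k_0\pi/L(0))^2,\,((k_0{+}1)\pi/L(0))^2\big)$. Choose $\chi(x,y) := \rho(x)$ with $\rho \in \Cinfz(I_{\delta/2})$ identically equal to $1$ on $I_\delta$, and set $w_h := \chi\, u_h$. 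Since $\chi \equiv 1$ on $\Omega_\delta$, we have $\|u_h\|_{L^2(\Omega_\delta)} \leq \|w_h\|$, so it suffices to bound $\|w_h\|$.

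First I would carry out the standard commutator computation
$(Q_h - E_h)w_h = [Q_h,\chi]u_h = -h^2(2\chi'\partial_x u_h + \chi'' u_h)$. Because $E_h \in [a,b]$, the identity $q_h(u_h) = E_h \|u_h\|^2$ gives the uniform bounds $\|h\partial_x u_h\| \leq \sqrt{b}\|u_h\|$ and $\|\partial_y u_h\| \leq \sqrt{b}\|u_h\|$, whence $\|(Q_h - E_h)w_h\| \leq C\,h\,\|u_h\|$. Combining this with the asymptotic first-order lemma applied to the pair $(w_h,\phi)$ for $\phi$ ranging over the form domain, one obtains, exactly as in the passage from \eqref{eqn:after-compare} to \eqref{eqn:H-minus-1-estimate},
\[
\|(\Acal_h - E_h) w_h\|_{H^{-1}_{\Acal_h}}~\leq~C\,h\,\|u_h\|.
\]

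Next I would invoke the Neumann analogue of Proposition \ref{prop:ncsum}: for $1 \leq k \leq k_0$, Proposition \ref{prop:ncinterval} applies mode-by-mode; for $k \geq k_0+1$, the absence of spectrum of $A_{k,h}$ near $[a,b]$ produces a direct resolvent bound as in the proof of Proposition \ref{prop:ncsum}; and for the new $k=0$ mode, the preceding lemma for $P_h = -h^2 v''$ yields the same inequality after transferring to $A_{0,h}$ via the equivalence of $L^2(I_\delta,dx)$ and $L^2(I_\delta,L(x)dx)$ and absorbing the lower-order perturbation $-h^2(L'/L)v_0'$ into the controlling term (its contribution to the $H^{-1}$-norm is bounded by $h \cdot \|h v_0'\|$ since $L'/L$ is bounded on $I_\delta$). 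Since each Fourier coefficient of $w_h$ is supported in $I_{\delta/2}$, summing yields
\[
\|w_h\|_{H^1_{\Acal_h}}~\leq~\frac{\eps}{h}\,\|(\Acal_h - E_h)w_h\|_{H^{-1}_{\Acal_h}}\,+\,C\,\|h\, D'w_h\|_{\tHc}.
\]

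Finally, using \eqref{eqn:D'} and $w_h^* = -y(L'/L)w_h$, a direct expansion of $\partial_x w_h = \chi'u_h + \chi \partial_x u_h$ and of $\partial_y w_h^* = -(L'/L)\chi u_h - y(L'/L)\chi \partial_y u_h$ together with the boundedness of $L'/L$ on $\supp(\chi) \subset I_{\delta/2}$ and the estimate $\|h\partial_y u_h\| \leq \sqrt{b}\,h\|u_h\|$ gives $\|hD'w_h\|_{\tHc} \leq \|h\partial_x u_h\| + C\,h\,\|u_h\|$. Plugging this and the previous quasi-mode estimate into the non-concentration inequality produces $\|w_h\| \leq \eps\|u_h\| + C\|h\partial_x u_h\| + C\,h\,\|u_h\|$, and taking $h_0$ small enough to absorb $Ch\|u_h\|$ into the $\eps\|u_h\|$ term (with $\eps$ replaced by $2\eps$) completes the proof. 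The main technical point is the verification of the Neumann non-concentration estimate in the presence of the zeroth mode: the absence of a confining potential for $k=0$ is compensated here by the Dirichlet condition at $x = \pm(1-\delta)$ imposed by the cutoff, which is what permits the preceding lemma to apply.
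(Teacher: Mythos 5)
Your proof is correct and follows the paper's own argument: cut off $u_h$ to $w_h = \chi u_h$ with $\chi$ supported in $I_{\delta/2}$, show that $w_h$ is an $O(h)$ quasi-mode for $\Acal_h$ via the commutator and the first-order asymptotic lemma, apply the Neumann analogue of Proposition \ref{prop:ncsum} (with the new zeroth mode handled by the lemma for $P_h=-h^2\partial_x^2$ and transferred to $A_{0,h}$ via the analogue of Lemma \ref{lem:equivAP}), bound $\|h\,D'w_h\|$, and absorb the small terms. In fact you are slightly more careful than the text: your intermediate bound $\|(\Acal_h - E_h)w_h\|_{H^{-1}_{\Acal_h}} \leq C\,h\,\|u_h\|$ carries the factor of $h$ that is actually needed to cancel the $\eps/h$ prefactor in the non-concentration estimate, whereas the paper records this bound as $M\,\|u_h\|_{L^2(\Omega)}$ with no $h$, an apparent typographical slip.
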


\begin{proof}
  We fix a cutoff function $\chi$ that is identically $1$ in $I_\delta$ and identically $0$ outside $I_{\frac{\delta}{2}}$.
  The previous lemma implies that the analogue of Proposition \ref{prop:ncsum} holds true 
  \[
    \| \chi u\|_{H^1_{\Acal_h}}~
    \leq~
    \frac{\eps}{h}\, \|(\Acal_h-E_h)(\chi u_h)\|_{H^{-1}_{\Acal_h}}\,
    +\,
    C\, \|h\, D'(\chi u_h)\|_{L^2(\Omega_{\delta/2})}
  \]
  where all the norms are taken on $I_{\delta/2}$.
  Using the same arguments as in the proof of Theorem \ref{thm:limits}, we show first that there exists some $M$ (that depends only on $\delta,\,a,$ and $b$)
  such that
  \[
    \|(\Acal_h-E_h)(\chi u_h)\|_{H^{-1}_{\Acal_h}}\,\leq\,M\|u_h\|_{L^2(\Omega)}.
  \]
  and then that there exists another constant $M$ such that 
\[
\|h\, D'(\chi\, u_h)\|_{L^2(\Omega_{\delta/2})}\,\leq\,M\left( \|h\, \partial_x u_h\|_{L^2(\Omega)}\,+\,h\, \|u_h\|\right ).
\]
The claim follows.  
\end{proof}

As before, we will use the preceding proposition to prove that if an eigenvalue branch converges to a limit that is not a
threshold, then its mass must concentrate away of $\Omega_\delta$.
Our aim will then be to obtain a contradiction.

So for the remainder of this appendix, we make the following assumption:

\begin{ass}\label{ass:notthreshold}
\label{ass:no-theshold}
 We assume that  $(u_h,E_h)$ is  an analytic eigenbranch of $q_h$ such that
  \begin{equation}\label{eq:notthreshold}
    \lim_{h\rightarrow 0} E_h~
    \notin~
    \left\{ \frac{k^2\pi^2}{L^2(0)},~k \in \N \right \}.
  \end{equation}
\end{ass}

Using the Feynman-Hellmann 
formula (\ref{eqn-Feynman-Hellmann}) as in  the proof of Theorem 
\ref{thm:limits}, we find  that there exists a 
subset $\Hbb \subset \R^*$ with accumulation point $0$
such that for each $\epsilon>0$ there exists $h_0$ 
so that if $h \in \op 0, h_0\cl \cap \Hbb$, then
\begin{equation}
\label{eqn:integrability2}
\|h \cdot \partial_x u_h\|_{L^2(\Omega)}\,
\leq\,
\eps\, \|u_h\|_{L^2(\Omega)}.
\end{equation}
Assumption \ref{ass:no-theshold} implies that there exists an 
interval an $[a,b]$ not containing a threshold so that if $h$
is sufficiently small, then $E_h \in [a,b]$. Therefore, 
we may apply Proposition \ref{prop:controlestimate} to the 
sequence $(u_h)_{h \in \Hbb}$ and  obtain

\begin{prop}
\label{prop:coninwings}
 Under assumption \ref{ass:notthreshold}, there exists a subsequence $\Hbb$ going to $0$ such that, for any $\eps,\delta>0$, 
there exists $h_0$ such that if $h \in  \op 0,h_0 \cl \cap \Hbb$, then
\begin{equation}
\label{eqn:no-threshold}
    \|u_h\|_{L^2(\Omega_\delta)}\,\leq \, \eps\, \|u_h\|_{L^2(\Omega)}.
\end{equation}
\end{prop}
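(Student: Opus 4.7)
The plan is to combine Proposition \ref{prop:controlestimate} with the integrability condition \eqref{eqn:integrability2} that follows from the Feynman-Hellmann formula under Assumption \ref{ass:notthreshold}. Both ingredients are already in place: the first provides a quantitative bound on the $L^2$ mass of $u_h$ in the central region $\Omega_\delta$ in terms of $\|u_h\|$ and $\|h\,\partial_x u_h\|$, and the second shows that for a well-chosen subsequence of $h$ going to zero, the gradient-in-$x$ part is $o(\|u_h\|)$.

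First, since $\lim_{h \to 0} E_h$ is, by \eqref{eq:notthreshold}, at positive distance from the discrete set of thresholds, there exist $0<a<b$, with $[a,b]$ containing no threshold, and some $h_1>0$ such that $E_h \in [a,b]$ for every $h \leq h_1$. Next, because $h \mapsto E_h$ is monotone and bounded, its derivative is integrable on $\op 0,h_1\cl$; the Feynman-Hellmann identity \eqref{eqn-Feynman-Hellmann} then implies (exactly as in the passage from \eqref{eqn-Feynman-Hellmann} to \eqref{eqn:integrability} in \S\ref{sec:shrink}) the existence of a subset $\Hbb \subset \op 0, h_1 \cl$ with $0$ as an accumulation point and of a sequence $\eps_h \to 0$ along $\Hbb$ such that
\[
  \| h \cdot \partial_x u_h\|_{L^2(\Omega)}^2~
  \leq~
  \eps_h\, \|u_h\|_{L^2(\Omega)}^2 \qquad (h \in \Hbb).
\]

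Now fix $\eps>0$ and $\delta>0$. Apply Proposition \ref{prop:controlestimate} to the interval $[a,b]$, the parameter $\delta$, and the reduced threshold $\eps/2$: this yields a constant $C=C(\eps,\delta,a,b)$ and some $h_0 \leq h_1$ such that, for every $h \leq h_0$,
\[
  \|u_h\|_{L^2(\Omega_\delta)}~
  \leq~
  \frac{\eps}{2}\, \|u_h\|_{L^2(\Omega)}\,
  +\,
  C\, \| h \cdot \partial_x u_h\|_{L^2(\Omega)}.
\]
Shrinking $h_0$ if necessary, we may further arrange that $\sqrt{\eps_h} \leq \eps/(2C)$ for all $h \in \Hbb \cap \op 0, h_0 \cl$. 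Combining the two displayed inequalities then gives $\|u_h\|_{L^2(\Omega_\delta)} \leq \eps\, \|u_h\|_{L^2(\Omega)}$, which is exactly the claim.

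No genuine obstacle arises here: the result is obtained by mechanically combining Proposition \ref{prop:controlestimate} with the quasi-invariance bound coming from the Feynman-Hellmann formula. The only subtle point worth flagging is that the set $\Hbb$ is fixed once and for all at the outset (it depends on $(u_h,E_h)$ but not on $(\eps,\delta)$), while the $h_0$ in the conclusion is allowed to depend on $\eps$ and $\delta$; this matches the order of quantifiers stated in the proposition.
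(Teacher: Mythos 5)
Your proposal is correct and follows essentially the same route as the paper: it combines Proposition \ref{prop:controlestimate} with the integrability bound on $\|h\,\partial_x u_h\|$ coming from the Feynman--Hellmann formula and the monotonicity of $E_h$, which is precisely what the paper does in the sentences surrounding \eqref{eqn:integrability2}. The only cosmetic difference is that you package the smallness of $\|h\,\partial_x u_h\|$ as a sequence $\eps_h\to 0$ and absorb the constant $C$ by shrinking $h_0$ so that $\sqrt{\eps_h}\leq\eps/(2C)$; this is the same bookkeeping the paper performs, and your remark about the quantifier order ($\Hbb$ fixed first, $h_0$ depending on $\eps,\delta$) is accurate.
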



\subsection{Outside of the central region}

In this section, we assume that $(u_h)_{h\in \Hbb}$ is a sequence 
provided by Proposition \ref{prop:coninwings}.
The proof of Theorem \ref{thm:limits-Neumann} will be
completed by showing that estimate 
(\ref{eqn:no-threshold}) cannot hold true.
That is, Assumption \ref{ass:no-theshold} will be contradicted.

We want to control $u_h$ near $x= \pm 1$.
These two problems are completely equivalent so that we will only treat the case 
where $x$ is near $-1$.
In order to simplify the notation, we make the change of variable  $x \leftarrow x+1$ so that
$x$ is now near $0$, and $L(x)\,=\,\sqrt{x(2-x)}$. We set
\[
  U_{\delta}~
  =~
  \left\{\, (x,y),~~x\in \op 0,\delta \cl,~ 0< y< L(x)~ \right\}.
\]
In other words, $U_{\delta}$ is the connected component of 
$\Omega \setminus \Omega_{\delta}$ that lies to the left of the central region $\Omega_{\delta}$.

Since $u_h$ is an eigenvalue, we will use several times the following estimate that we call the $H^1_h$ bound:
\[
  \|h\, \px u_h\|^2_{L^2(\Omega)}\,+\,\|\py u_h\|^2\,\leq\, C\|u_h\|_{L^2(\Omega)}.
\]

We recall the transversal Fourier decomposition of $u_h$: 
\[
u_h \,=\,v_h\otimes \un \,+\,\sum_{k\geq 1} u_{h,k}\otimes e_k.
\]
The function $v_h\otimes \un$ is the (transversal) zeroth mode of $u_h$, and we define
$u_h^{\bot}\,=\,u_h-v_h\otimes \un$.
Our aim is to prove there exists some $\delta$ so that, for $h$ small enough, 
all of the mass of $u_h$ cannot
lie in $U_\delta$. We will use three different estimates to achieve our goal:
\begin{itemize}
\item An estimate for $u^\bot_h$ in $U_\delta$ that follows from a Poincaré inequality. 
(See Lemma \ref{lem:u-perp}.)
\item An estimate for $v_h$ in  the interval $[0,\eps h]$ that follows from the $H^1_h$ bound.
(See Corollary \ref{coro:H1-bound}.)
\item An estimate for $v_h$ in the interval $[\eps h, \delta]$ that is obtained using standard
  methods in the study of one-dimensional Schrödinger (or Sturm-Liouville) equations.
  (See Proposition \ref{prop:boundonv}).
\end{itemize}
It will be possible to choose the parameters $\eps$ and $\delta$, 
and we will always assume that
$h$ is chosen so that $\eps h < \delta$. We also fix some (small) upper bound $\delta_0$ so as to have
estimates that depend on $\delta_0$ and not on $\delta < \delta_0$.

\medskip

\subsubsection{The estimate for $u_h^{\perp}$}

\begin{lem}
\label{lem:u-perp}
  There exists a function $f$ that goes to $0$ when $\delta$ goes to zero so that if 
  $h\leq h_0$, then 
  \[
   \| u_h^{\perp}\|_{L^2(U_\delta)}\,
    \leq\, 
    f(\delta)\, \|u_h\|_{L^2(\Omega)}.
  \]
\end{lem}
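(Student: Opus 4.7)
The plan is to exploit that $u_h^\perp$ has zero transversal mean on each slice $\{x\}\times [0,L(x)]$ and apply a one-dimensional Poincaré-Wirtinger (Neumann) inequality in $y$, then use the shrinking of the slice width $L(x)$ as $x \to 0$ together with the a priori $H^1_h$ bound on $\partial_y u_h$.

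First I would fix $x \in \op 0, \delta \cl$ and use the decomposition
\[
u_h(x,y) \,=\, v_h(x) \,+\, \sum_{k\geq 1} u_{h,k}(x) \sqrt{2}\,\cos\!\left( \frac{k\pi y}{L(x)} \right),
\]
so that $u_h^\perp(x,\cdot)$ has zero mean on $[0,L(x)]$. The cosines $\sqrt{2}\cos(k\pi y/L(x))$ for $k \geq 1$ are the Neumann eigenfunctions on $[0,L(x)]$ associated with the nonzero eigenvalues $(k\pi/L(x))^2$, the smallest being $(\pi/L(x))^2$. Thus the variational characterization (Poincaré-Wirtinger) yields
\[
\int_0^{L(x)} |u_h^\perp(x,y)|^2\, dy~
\leq~
\frac{L(x)^2}{\pi^2}\,
\int_0^{L(x)} |\partial_y u_h^\perp(x,y)|^2\, dy.
\]
Because the $y$-derivative kills the zeroth mode $v_h(x)\otimes \un$, one has $\partial_y u_h^\perp = \partial_y u_h$, and integrating the displayed inequality over $x \in \op 0,\delta \cl$ gives
\[
\|u_h^\perp\|_{L^2(U_\delta)}^2~
\leq~
\sup_{x \in \op 0, \delta \cl} \frac{L(x)^2}{\pi^2}\cdot
\|\partial_y u_h\|_{L^2(\Omega)}^2.
\]

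Next I would bound the two factors separately. For the first, since $L(x)=\sqrt{x(2-x)}$ near $x=0$, we have $L(x)^2 \leq 2\delta$ on $U_\delta$, so the prefactor is at most $2\delta/\pi^2$. For the second, since $u_h$ is an eigenfunction of $q_h$ with eigenvalue $E_h$ bounded above (for $h\leq h_0$) by some constant $b$, the $H^1_h$ bound gives
\[
\|\partial_y u_h\|_{L^2(\Omega)}^2~
\leq~
q_h(u_h)~=~E_h\,\|u_h\|_{L^2(\Omega)}^2~
\leq~
b\,\|u_h\|_{L^2(\Omega)}^2.
\]
Combining,
\[
\|u_h^\perp\|_{L^2(U_\delta)}~
\leq~
\frac{\sqrt{2b\,\delta}}{\pi}\,\|u_h\|_{L^2(\Omega)},
\]
so the conclusion holds with $f(\delta):=\sqrt{2b\,\delta}/\pi$, which tends to $0$ as $\delta \to 0$.

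I do not expect any serious obstacle here: the statement is essentially a one-dimensional Neumann Poincaré inequality integrated slice by slice, with the gain coming from the vanishing of the slice width $L(x)$ at $x=0$. The only mildly delicate point is justifying the exchange of the integral in $x$ with the supremum of $L(x)^2/\pi^2$, but this is a straightforward pointwise bound and requires no additional input beyond the explicit form of $L$ near the endpoint.
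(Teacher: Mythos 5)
Your proof is correct and follows essentially the same route as the paper: the paper's displayed inequality for the Fourier coefficients $u_{h,k}$ is exactly the slice-by-slice Poincaré–Wirtinger estimate you state, and both arguments then close by bounding $\|\partial_y u_h\|_{L^2(\Omega)}^2$ via the eigenvalue equation (the $H^1_h$ bound). The only cosmetic difference is that the paper pulls out $L^2(\delta)$ by monotonicity of $L$ near the endpoint rather than using the explicit form $L(x)^2\leq 2\delta$, and it absorbs the $1/\pi^2$ into a generic constant.
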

\begin{proof}
  For any $k\geq 1$, we have
  \[
    \int_0^\delta |u_{h,k}(x)|^2 L(x) dx \,\leq L^2(\delta) \int_{0}^\delta \frac{k^2\pi^2}{L^2(x)}|u_{h,k}(x)|^2 L(x) dx.
  \]
  It follows that
  \[
    \| u_h^\perp \|^2_{L^2(U_\delta)}~
    \leq~
    L^2(\delta)\,
    \| \py u_h
    \|^2_{L^2(U_\delta)}~
    \leq~
    C\cdot {L^2(\delta)}\,
    \| \py u_h\|^2_{L^2(\Omega)}.
  \]
\end{proof}

\subsubsection{The estimate for $v_h$ in $[0,\eps h]$} \ \\

In this regime, we only need the fact that $u_h$ is in $H^1(\Omega)$. 
Thus, we will supress the dependence of $u$ on $h$ here.

\begin{lem}
  There exists a constant $C$ such that,
  for any $u\in H^1(\Omega)$, for any $\alp,\beta$ such that
  $\alpha < \beta < \frac{1}{2}$ we have
  \[
    \int_0^\alpha |v(x)|^2\,L(x)dx\,\leq\, C \left[
      \left(\frac{\alpha}{\beta}\right)^{\frac{3}{2}}\|u\|^2_{L^2(\Omega)}
      \,+\,\alpha^\td\beta^\und \|\px u\|^2_{L^2(\Omega)}\,+\,\alpha \|\py u\|_{L^2(\Omega)}^2\right ],
  \]
  where $v\otimes \un$ is the zeroth-mode of $u$.
\end{lem}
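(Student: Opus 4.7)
The plan is to bound $v$ on $[0,\alpha]$ by comparison with its value at a well-chosen point $\beta_0\in[\beta/2,\beta]$, then to control the difference $v(x)-v(\beta_0)$ by explicit integrals involving $\partial_x u$ and $\partial_y u$. The starting splitting is $v(x)^2\leq 2v(\beta_0)^2+2(v(x)-v(\beta_0))^2$. The mean value theorem applied to $\int_{\beta/2}^\beta v(t)^2 L(t)\,dt\leq\|u\|^2$ (a direct consequence of Cauchy-Schwarz on the vertical slice) produces $\beta_0$ with $v(\beta_0)^2 L(\beta_0)\leq (2/\beta)\|u\|^2$; since $L(\beta_0)\geq c\sqrt{\beta}$ and $\int_0^\alpha L(x)\,dx\leq C\alpha^{3/2}$, this accounts for the $C(\alpha/\beta)^{3/2}\|u\|^2$ term.

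For the difference $v(x)-v(\beta_0)$, the parametrization $v(t)=\int_0^1 u(t,L(t)s)\,ds$ together with an integration by parts in $s$ produces the identity
\[
v'(t)=A(t)+L'(t)B(t),\quad A(t)=\int_0^1\partial_x u(t,L(t)s)\,ds,\quad B(t)=\frac{u(t,L(t))-v(t)}{L(t)}.
\]
Thus $(v(x)-v(\beta_0))^2\leq 2|\int_x^{\beta_0} A\,dt|^2 + 2|\int_x^{\beta_0} L'B\,dt|^2$, and the two pieces are handled separately. The $A$-piece is routine: weighted Cauchy-Schwarz with weight $L(t)$, together with $A(t)^2 L(t)\leq\int_0^{L(t)}|\partial_x u|^2\,dy$ and $\int_0^\beta dt/L(t)\leq C\sqrt\beta$, gives $|\int_x^{\beta_0} A\,dt|^2\leq C\sqrt\beta\,\|\partial_x u\|^2$. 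Multiplying by $L(x)$ and integrating over $x\in[0,\alpha]$ produces the $C\alpha^{3/2}\beta^{1/2}\|\partial_x u\|^2$ term.

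The main difficulty is the $L'(t)B(t)$ piece, since $L'(t)^2\sim 1/(2t)$ is nonintegrable near $t=0$ and a naive weighted Cauchy-Schwarz cannot simultaneously yield the expected $\alpha$-dependence for the $\|\partial_y u\|^2$ term. The key trick is the observation that $L'(t)\,dt/L(t)=d\sigma/\sigma$ under the substitution $\sigma=L(t)$, so that with $\tau=L^{-1}$
\[
\int_x^{\beta_0} L'(t)B(t)\,dt = \int_{L(x)}^{L(\beta_0)} \frac{u(\tau(\sigma),\sigma)-v(\tau(\sigma))}{\sigma}\,d\sigma.
\]
Cauchy-Schwarz and $\int_{L(x)}^{L(\beta_0)} d\sigma/\sigma^2\leq 1/L(x)$ give $|\int_x^{\beta_0} L'B\,dt|^2\leq L(x)^{-1}\int_x^{\beta_0} G(t)^2 L'(t)\,dt$ with $G(t)=u(t,L(t))-v(t)$. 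The sliced Poincar\'e-type estimate $G(t)^2\leq (L(t)/3)\int_0^{L(t)}|\partial_y u|^2\,dy$ (itself a Cauchy-Schwarz consequence of the identity $u(t,L(t))-v(t)=L(t)^{-1}\int_0^{L(t)} z\,\partial_y u(t,z)\,dz$) together with the cancellation $L(t)L'(t)=1-t\leq 1$ bounds the last integral by $\|\partial_y u\|^2/3$. Multiplying by $L(x)$ and integrating over $[0,\alpha]$ cancels the $L(x)$ factor and delivers precisely $(\alpha/3)\|\partial_y u\|^2$. Combining the three contributions completes the proof.
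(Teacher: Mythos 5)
Your proof is correct, and it takes a genuinely different technical route from the paper's, especially in how the $\|\partial_y u\|^2$ contribution is isolated. Both arguments begin from the same identity for $v'(t)$ (yours by differentiating $v(t)=\int_0^1 u(t,L(t)s)\,ds$, the paper's by reading off the Fourier decomposition of $\partial_x u$), and both compare $v(x)$ to $v$ near $\beta$ and integrate the derivative. Where you find a single good point $\beta_0$ by the mean value theorem, the paper averages $|v(y)|^2$ against $L(y)\,dy$ over $[\beta,2\beta]$ — same effect. The real divergence is the $L'B$ piece: the paper bounds $|v'(x)|$ pointwise by $C\bigl[x^{-1/4}N_{\partial_x u}(x)+x^{-3/4}N_{\partial_y u}(x)\bigr]$, applies Cauchy--Schwarz in $t$, and then absorbs the resulting $x^{-1/2}$ prefactor into the weight $L(x)\,dx\asymp x^{1/2}\,dx$ at the very end; you instead change variables $\sigma=L(t)$, use the sliced Poincar\'e inequality $|u(t,L(t))-v(t)|^2\leq \tfrac{L(t)}{3}\int_0^{L(t)}|\partial_y u|^2\,dz$, and invoke the algebraic identity $L(t)L'(t)=1-t\leq 1$. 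This removes the scary $x^{-3/4}$ intermediate singularity entirely and hands you the term $\tfrac{\alpha}{3}\|\partial_y u\|^2$ with an explicit constant. The tradeoff is that your argument leans on the exact formula $L(x)=\sqrt{x(2-x)}$ (via $L$ increasing and $LL'\leq 1$), whereas the paper's pointwise estimates use only the asymptotics $L(x)\asymp\sqrt{x}$, $|L'(x)|\lesssim x^{-1/2}$ near the endpoint, so the paper's version would transfer verbatim to any profile with a square-root degeneracy. Both give the identical final bound.
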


\begin{proof}
  By density, we may assume that $u\in C^\infty(\bar{\Omega})$.
  Using the transversal Fourier decomposition, we have
  \[
    \px u \,=\,v'\otimes \un \,+\,\sum_{k\geq 1} u_k'\otimes e_k -\frac{yL'}{L}\py u.
  \]
  Since $yL'/L \in L^2(\Omega)$ and $|\nabla u|$ is bounded, it follows that
  \[
    \px u \,+\,\frac{yL'}{L}\py u \in L^2(\Omega),
  \]
  so that
  \[
    v'(x)\,=\,\frac{1}{L(x)}\int_0^{L(x)} \left( \px u(x,y)\,+\,\frac{yL'(x)}{L(x)}\py u(x,y)\right) \, dy.
  \]
  For a function $w\in L^2(\Omega)$, we introduce the notation
  \[
    N_{w}(x)\,=\,\left( \int_0^{L(x)} |w(x,y)|^2\, dy\right)^\und.
  \]
  By construction, for any $\delta>0$,
  \[
    \int_0^\delta |N_w(x)|^2\, dx\,\leq\, \|w\|_{L^2(\Omega)}^2.
  \]
  Using the Cauchy-Schwarz inequality on the definition of $v'$ and the behaviour at $0$ of $L$, we get that there
  is a constant $C$ such that
  \[
    \forall x\leq \frac{1}{2},~~ |v'(x)|\,\leq\,C\left[ x^{-\frac{1}{4}}N_{\px u}(x)\,+\,x^{-\frac{3}{4}}N_{\py u}(x)\right ].
  \]
  Integrating this inequality on an interval $[x,y]$, and using Cauchy-Schwarz again, we obtain
  \[
    |v(y)-v(x)|\,\leq\, C\left[ (y^{\frac{1}{2}}-x^{\frac{1}{2}})^{\und}\| \px u\|_{L^2(\Omega)}\,+
      \,(x^{-\frac{1}{2}}-y^{-\frac{1}{2}})^\und \partial_y u\|_{L^2{(\Omega)}}\right ].
  \]
  We square, use Young's inequality, and remove the obviously negative terms to obtain that
  for each $x<y$
  \[
    |v(x)|^2\,\leq\,C\left[ |v(y)|^2\,+\,y^\und\| \px u\|^2\,+\,x^{-\und}\| \py u\|^2 \right].
  \]
  We now fix $\alpha<\beta$, multiply by $L(y)$ and integrate the preceding over $y\in [\beta,2\beta]$.
  Using that
  \[
    \int_\beta^{2\beta} L(y)\,dy \asymp \int_\beta^{2\beta} y^{\und} \,dy \asymp \beta^{\td},
  \]
  we obtain
  \[
    \forall x\in (0,\alpha),~~|v(x)|^2 \,\leq\,C\left[ \beta^{-\td}\|u\|^2_{L^2(\Omega)}\,+\,\beta^{\und} \|\px u\|_{L^2(\Omega)}^2
      \,+\,x^{-\und}\| \py u\|_{L^2(\Omega)}^2 \right ].
  \]
  We finally multiply by $L(x)$ and integrate over $x\in  \op 0,\alpha \cl$.
\end{proof}

We obtain the following corollary for $v_h$.
\begin{coro}
\label{coro:H1-bound}
  There exists a constant $C$ such that, for any $\eps\in \op 0,1 \cl$ and $h$ small enough we have
  \[
    \int_0^{\eps h}|v(x)|^2L(x)\,dx\,\leq\, C\eps\, \|u_h\|^2.
  \]
\end{coro}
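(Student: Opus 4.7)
The plan is to apply the preceding lemma directly to $u = u_h$, choosing $\alpha$ and $\beta$ so that the left-hand side is exactly the quantity we want to control. The natural choice is $\alpha = \eps h$, matching the interval of integration, together with a companion parameter $\beta$ comparable to $h$. Concretely, I would take $\beta = h$ (and, to handle the boundary value $\eps = 1$ cleanly, one can replace this by $\beta = 2h$). For $h$ sufficiently small and $\eps \in \op 0, 1 \cl$, these choices satisfy the hypothesis $\alpha < \beta < 1/2$ of the lemma.

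To control the right-hand side supplied by the lemma, I would invoke the $H^1_h$ bound recalled at the start of this subsection: since $u_h$ is an eigenfunction of $q_h$ with eigenvalue $E_h$ staying in a compact subset of $\op 0, \infty \cl$, we have
\[
\| h \cdot \px u_h \|^2_{L^2(\Omega)}\, +\, \| \py u_h \|^2_{L^2(\Omega)}\, \leq\, C\, \| u_h\|^2_{L^2(\Omega)}.
\]
In particular, $\|\px u_h\|^2 \leq C h^{-2} \|u_h\|^2$ and $\|\py u_h\|^2 \leq C \|u_h\|^2$.

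Plugging $\alpha = \eps h$ and $\beta \asymp h$ into the three terms on the right of the lemma gives, respectively, $(\alpha/\beta)^{3/2}\|u_h\|^2 \asymp \eps^{3/2}\|u_h\|^2$, then $\alpha^{3/2}\beta^{1/2}\|\px u_h\|^2 \leq C\eps^{3/2}\,h^2\|\px u_h\|^2 \leq C\eps^{3/2}\|u_h\|^2$, and finally $\alpha \|\py u_h\|^2 \leq C\eps h \|u_h\|^2$. Since $\eps \leq 1$ gives $\eps^{3/2} \leq \eps$, and $h \leq 1$ gives $\eps h \leq \eps$, each of the three contributions is bounded by a constant multiple of $\eps \|u_h\|^2$, which yields the corollary.

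I do not anticipate a genuine obstacle here: the substantive content is already contained in the preceding lemma, and the corollary amounts to choosing the scale $\beta \asymp h$ that balances the three terms against each other once the $H^1_h$ eigenfunction bound has been used to trade a power of $h^{-1}$ for a bound on $\|\px u_h\|$.
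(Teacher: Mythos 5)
Your proposal is correct and matches the paper's proof exactly: the paper also takes $\alpha=\eps h$, $\beta=h$, and closes with the eigenfunction $H^1_h$ bound. (The side remark about $\eps=1$ is moot, since $\op 0,1\cl$ is the open interval and the hypothesis $\alpha<\beta$ already holds.)
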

\begin{proof}
  We let $\beta = h$ and $\alp = \eps h$ to obtain
  \[
    \int_0^{\eps h}|v(x)|^2L(x)\,dx~
    \leq~
      \eps^{\frac{3}{2}}\,
      \|u_h\|^2_{L^2(\Omega)}
      \,+\,
      \eps^\td\,
      \|h\, \px u_h\|^2_{L^2(\Omega)}\,
      +\,
      \eps h\,
      \|\py u_h\|_{L^2(\Omega)}^2.
  \]
  The claim follows using the eigenvalue $H^1_h$ bound on $u_h$.
\end{proof}

We now move on to study $v_h$ on $[\eps h, \delta]$.

\subsubsection{The estimate for $v_h$ in $[\eps h,\delta]$}

Fix some $\delta_0 \,<\,2$, and let $\phi \in \Cinf_0(\op 0, \delta\cl)$.
Integration against the eigenequation gives
$$
h^2\int_{\Omega} \px u_h(x,y) \phi'(x)\, dxdy\,
=\,
E_h\int_{\Omega}u_h(x,y) \phi(x)\, dxdy\,
=\,
E_h\int_{0}^{\delta_0} v_h(x)\phi(x) \, L(x) dx.
$$
Now, recall that 
\[
\px u_h \,=\,v_h'\otimes \un\,+\,\sum_{k\geq 1} u_{h,k}'\otimes e_k\,-\,\frac{yL'}{L}\py u_h,
\]
and integrate this equation against $\phi'$ to find 
\[
h^2\int_{0}^{\delta_0} v_h'(x)\phi'(x) \, L(x)dx -E_h\int_{0}^{\delta_0} v_h(x)\phi(x) \, L(x) dx\,=\,h^2\int_{0}^{\delta_0} r_h(x) \phi'(x) dx, 
\]
where we have set 
\[
r_h(x)\,=\,\int_0^{L(x)} \frac{yL'(x)}{L(x)}\py u_h dy.
\]

We divide by $h^2$ and make an integration by parts on the RHS to find
that 
\[
  \int_{0}^{\delta_0} v_h'(x)\phi'(x) \, L(x)dx -\frac{E_h}{h^2}\int_{0}^{\delta_0} v_h(x)\phi(x) \, L(x) dx
  \,=\,-\int_{0}^{\delta_0} r_h'(x)  \phi(x) \,dx
\]
 
It follows that, on the interval $(0, \delta_0)$, $v_h$ satisfies the following Sturm-Liouville equation:
\[
  -\frac{1}{L}(Lv_h')' -\frac{E}{h^2} v_h \,=\,-\frac{1}{L}r_h'.
\]

We first make the change of dependent variable $w=L^\und v$. The equation becomes
\begin{equation}
\label{eq:eqinhom}
  -w''\,-\,\frac{E}{h^2}w\,+\,\frac{1}{2}L^{-\und}\left( L'L^{-\und}\right)'w\,=\,-L^{-\und}r_h'.
\end{equation}

In order to solve this equation, we first study the associated homogenous equation:
\[
  -w''\,-\,\frac{E}{h^2}w\,+\,\frac{1}{2}L^{-\und}\left( L'L^{-\und}\right)'w\,=\,0. 
\]

We can write $L$ as
\[
  L(x)\,=\, \sqrt{2x}\,(1\,+\,\tilde{\ell}(x)),
\]
where $\tilde{\ell}$ is smooth on $[0,\delta_0]$. It follows that we can write 
\begin{equation}{\label{eq:hom}}
  \frac{1}{2}L^{-\und}\left( L'L^{-\und}\right)'(x)\,=\,-\frac{3}{16x^2}(1\,+\,x\ell(x)),
\end{equation}
where $\ell$ is smooth on $[0,\delta_0]$.  

We now perform the change of independent variable $x= \frac{\sqrt{E}}{h} z$.
Setting $W(z)=w(x)$, we are led to the following homogeneous equation on 
$\left[\eps\sqrt{E_h}, \frac{\delta_0\sqrt{E}}{h} \right]$:
\[
  W''\,+\,W\,+\,\frac{3}{16z^2}W\,=\,f_h(z)\, W.
\]
We set $J_h\,=\,\left[\eps\sqrt{E_h}, \frac{\delta_0\sqrt{E}}{h} \right]$ and observe that, for any $h\in \Hbb$, we have
$J_h\subset \bar{J}_h\,= [\eps\sqrt{a},Z_h]$ with $Z_h=\frac{\delta_0\sqrt{b}}{h}$.
The function $f_h$ satisfies the following uniform bound:
\begin{gather}
\nonumber
\exists M,~~\forall h\leq h_0,~\forall E\in [a,b],~\forall z \in \bar{J}_h,\\
\label{eqn:fbound}
|f_h(x)|\,\leq\, M\frac{h}{z}.
\end{gather}
We solve the preceding equation as a perturbation of the equation
\begin{equation}
\label{eqn:normalized-ODE}
  W''\,+\,W\,\,+\frac{3}{16z^2}W\,=\,0.
\end{equation}
It is standard that any solution to equation (\ref{eqn:normalized-ODE}) 
on the half-line is asymptotic
to a linear combination of $\cos x$ and $\sin x $ near $+\infty$.
We define $\psi_c$ to be the solution that is asymptotic to cosine and $\psi_s$ 
to be the solution that is asymptotic
to sine. We now apply the variation of constants method: We define an 
operator $K$ on $C^0(\bar{J}_h)$ by
\[
  K[W](z)\,=\, \frac{1}{2}\int_z^{Z_h}f_h(\zeta)W(\zeta)\left[ \psi_c(z)\psi_s(\zeta) - \psi_s(z)\psi_c(\zeta) \right]\,d\zeta,
\]
and observe that the function $W\,+\,K[W]$ is a solution to the unperturbed equation
(\ref{eqn:normalized-ODE}).
It follows that a basis of solutions is given by $\{\phi_s, \phi_c\}$ where
\[
  \phi_s\,=\,\left[ \id \,+\, K\right]^{-1}\psi_s \ \mbox{\ and \ } \ \phi_c\,=\,\left[ \id \,+\, K\right]^{-1}\psi_c,
\]
provided that $\id + K$ is invertible. Since $\psi_c$ and $\psi_s$ 
are uniformly bounded on $[\eps\sqrt{a},+\infty)$, the uniform bound on $f_h$ in 
(\ref{eqn:fbound}) implies 
that there exists some constant $C$ that depends on $a,b,\eps,\delta_0$ such 
that for each $h \in \Hbb$ the operator norm of $K$ is at most $C h\,|\ln h|$.
This ensures the invertibility of $\id + K$ for small $h$, 
and so $\phi_s$ and $\phi_c$
are well-defined.

By unwinding the change of independent variable, we find that
\[
\left\{
  x\mapsto \phi_{s} \left(\frac{x\sqrt{E_h}}{h} \right),~
    x\mapsto \phi_{c} \left(\frac{x\sqrt{E_h}}{h} \right)
  \right\}
\]
is a basis of solutions to the homogenous equation (\ref{eq:hom}).
Since the Wronskian of these two solution is $\frac{2\sqrt{E_h}}{h}$ we find that
the following function $w_p$ solves (\ref{eq:eqinhom}) on the interval $[\eps h, \delta]$
for any $\delta$ such that $\delta \leq \delta_0$.
\[
  \begin{split}
    w_p(x)\,
    =\,& 
    \frac{h}{2\sqrt{E}}\phi_c(x) \int_x^{\delta}L^{-\und}(\xi)\, 
    r'(\xi)\,
    \phi_s \left(\sqrt{E}\xi/h \right)\,d\xi\,
    \\
    & 
    -\frac{h}{2\sqrt{E}}\phi_s(x)\,
    \int_x^{\delta}L^{-\und}(\xi)\,
    r'(\xi)\,
    \phi_c \left(\sqrt{E}\xi/h \right)\,d\xi.
  \end{split}
\]
We perform an integration by parts in the integrals. The boundary terms at $x$ cancel out and
the boundary terms coming from $\delta$ contribute to a solution of the homogenous equation.
It follows that the sum of the following four terms is a solution to the equation:
\[
  \begin{split}
    w_p(x)\,
    = & 
    -\frac{h}{2\sqrt{E}}\phi_c(x) 
    \int_x^{2\delta}
    \left(
    L^{-\und}\right)'(\xi)\,
    r(\xi)\,
    \phi_s \left(\sqrt{E}\xi/h \right)\,d\xi\\\
    & - \phi_c (x) \int_x^{2\delta}L^{-\und}(\xi)r(\xi)
    \phi_s'\left(\sqrt{E}\xi/h \right)\,d\xi\\
    &  + \frac{h}{2\sqrt{E}}\phi_s(x) \int_x^{2\delta}\left( L^{-\und}\right)'(\xi)\,
    r(\xi)\,
    \phi_c \left(\sqrt{E}\xi/h \right)\,d\xi \\
    & +  \phi_s(x) \int_x^{2\delta}
    L^{-\und}(\xi)\,
    r(\xi)\,
    \phi_c'\left(\sqrt{E}\xi/h \right)\,d\xi.
  \end{split}
\]
We denote these four terms by $w_{p,i}$ for $i=1,2,3,4$. In order to bound these terms, we use
the following bound, obtained by applying the Cauchy-Schwarz inequality to the definition of $r$:
There exists $C$ so that 
\[
 |r(\xi)|\,
 \leq\, 
 C\, 
 \xi^{-\frac{1}{4}}\,
 N_{\py u}(\xi). 
\]
We also use that the functions $\phi_{s}$, $\phi_c$, and their derivatives are uniformly bounded on $[\eps\sqrt{a},Z_h)$.

For $i=1$ and $3$ we find that
\[
  \begin{split}
    |w_{p,i}(x)|\,&\leq\, C h\,
    \| \py u\| \left( \int_x^{\delta_0} y^{-3}\right)^{\und}\\
    &\leq\, Ch\, \| \py u\|\, x^{-1}.
  \end{split}
\]

It follows that, for any $\delta \leq \delta_0$
\[
  \begin{split}
    \int_{\eps h}^{\delta} |w_{p,i}(x)|^2 \, dx & \leq \, Ch^2\| \py u\|^2\int_{\eps h}^{\delta} x^{-2}\, dx \\
    &\leq \, C  h \| \py u\|^2.
  \end{split}
\]

We now proceed to estimate $w_{p,i}$ for $i=2$ or $4$.

We proceed as above, using that $\phi_{c,s}'$ is uniformly bounded on $[\eps, \infty)$.
For $i=2,4$ we obtain that 
\[
  \begin{split}
    |w_{p,i}(x)|\,& \leq\,C \| \py u\| \left( \int_{x}^{\delta_0} \xi^{-1}\,d\xi\right)^{\und}\\
    & \leq \, C \| \py u\| |\ln x|^\und.
  \end{split}
\]

It follows that, for any $\delta \leq \delta_0$
\[
  \begin{split}
    \int_{\eps h}^{\delta} |w_{p,i}(x)|^2 \, dx & \leq \, C\| \py u\|^2\int_{\eps h}^\delta |\ln \xi |\, d\xi \\
    &\leq \, C  \| \py u\|^2 \delta\ln|\delta|.
  \end{split}
\]

Finally, the $L^2$ norm of $w_p$ on $[\eps h,\delta]$ satisfies the following.
For any $\eps >0$, there exists a constant $C$ such that, for any $\delta$ and $h$ small enough,
\begin{equation}\label{eq:boundonwp}
  \|w_p\|_{L^2([\eps h, \delta])}\,\leq C\left ( h\,+\,\delta |\ln \delta|)\right)^{\und}\| u\|_{L^2(\Omega)}.
\end{equation}

Now, since $w_p$ is a solution to the same equation as $w_h$ there exist a solution $\phi_h$ to
the homogenous equation such that $w_h\,=\,w_p\,+\,\phi_h$.

We now claim the following 

\begin{lem}
\label{lem:overlap}
  For any $\delta \in \op 0, 1\cl$, there exist constants $C$ and $h_0$ such that, for any $h \leq h_0$ and
  any $\phi$ solution of the homogenous equation
  \[
    \int_{\eps h}^{\delta} \left|\phi\left(\frac{\sqrt{E}}{h}x\right)\right|^2\, dx~
    \leq~
    C \int_{\delta}^{2\delta} \left|\phi\left(\frac{\sqrt{E}}{h}x\right)\right|^2\, dx.
  \]
\end{lem}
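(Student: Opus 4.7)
The plan is to reduce the lemma to a rescaled oscillatory integral estimate. Making the change of variable $z = \sqrt{E}\, x/h$, the Jacobian $h/\sqrt{E}$ is common to both sides, so the inequality is equivalent to
\[
\int_{\eps\sqrt{E}}^{\delta\sqrt{E}/h} |\phi(z)|^2 \, dz \, \leq \, C \int_{\delta\sqrt{E}/h}^{2\delta\sqrt{E}/h} |\phi(z)|^2 \, dz.
\]
Crucially, both $z$-intervals have length of order $\delta\sqrt{E}/h$ (the right one exactly, the left shorter only by the bounded quantity $\eps\sqrt{E}$), a quantity that diverges as $h \to 0$, so each interval contains many oscillation periods of $\phi$.

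Next I would decompose $\phi = \alpha\phi_c + \beta\phi_s$ in the basis of solutions constructed above. Using $\|K\| = O(h|\ln h|)$ as established in the text, one obtains $\|\phi_c - \psi_c\|_{L^\infty(\bar J_h)} + \|\phi_s - \psi_s\|_{L^\infty(\bar J_h)} = O(h|\ln h|)$. A standard Volterra integral equation based on the Green's function for $W''+W=0$ yields the quantitative asymptotics $\psi_c(z) = \cos z + O(1/z)$ and $\psi_s(z) = \sin z + O(1/z)$ for $z$ bounded away from $0$. Combining these, for a suitably large fixed $Z^*$ and $h_0$ small, one has $\phi(z) = \alpha\cos z + \beta\sin z + r(z)$ on $[Z^*, 2\delta\sqrt{E}/h]$ with $\|r\|_{L^\infty} \leq \kappa(|\alpha|+|\beta|)$, where $\kappa = O(1/Z^*) + O(h|\ln h|)$ can be made as small as desired.

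The final step is a direct oscillatory averaging: for any interval $I \subset [Z^*, 2\delta\sqrt{E}/h]$ of length at least $2\pi$,
\[
\int_I |\alpha\cos z + \beta\sin z|^2 \, dz \,=\, \frac{\alpha^2+\beta^2}{2}\,|I|\, +\, O(\alpha^2+\beta^2),
\]
and controlling the cross terms with $r$ gives $\int_I |\phi(z)|^2 \, dz = \frac{\alpha^2+\beta^2}{2}|I|(1+o(1)) + O(\alpha^2+\beta^2)$. The bounded tail $[\eps\sqrt{E}, Z^*]$ contributes at most $C(\alpha^2+\beta^2)$ by the uniform boundedness of $\phi_c, \phi_s$. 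Applying these estimates to both target intervals shows that each side of the desired inequality is comparable to $(\alpha^2+\beta^2)\cdot \delta\sqrt{E}/h$, whence the ratio is bounded.

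The main obstacle will be the uniform control of the error $r(z)$ over an interval whose length diverges as $h \to 0$; this forces choosing $Z^*$ large enough that the $O(1/Z^*)$ correction is dominated by the main oscillatory term, and simultaneously $h_0$ small enough that the $O(h|\ln h|)$ perturbation between $\phi_{c,s}$ and $\psi_{c,s}$ does not spoil the estimate. Both choices can be arranged with constants depending only on $\delta$, $a$, $b$, $\eps$, and $\delta_0$, and homogeneity of both sides in $(\alpha,\beta)$ allows one to normalize $\alpha^2+\beta^2 = 1$.
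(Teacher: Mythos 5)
Your proposal is correct and follows essentially the same path as the paper: change variables to $z = \sqrt{E}\,x/h$, decompose $\phi = \alpha\phi_c + \beta\phi_s$, replace $\phi_{c,s}$ by $\psi_{c,s}$ using the $O(h|\ln h|)$ operator-norm bound on $K$, replace $\psi_{c,s}$ by $\cos,\sin$ using the $O(1/z)$ asymptotics, and then average the resulting trigonometric integral over both intervals to show each is comparable to $(\alpha^2+\beta^2)\,\delta\sqrt{E}/h$. The only cosmetic difference is that you introduce a truncation threshold $Z^*$ and push the $O(1/z)$ error into a small uniform $\kappa$ on $[Z^*,\cdot]$, whereas the paper integrates the $O(1/z)$ error directly and records it as $(\alpha^2+\beta^2)\,O(\ln|X/h|)$, which is then dominated by the main term of size $X/h$; the two bookkeeping schemes are interchangeable and lead to the same conclusion.
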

\begin{proof}
  Changing variables, it suffices to study the behaviour of
  \[
    \int_{\eps}^{X/h} |\phi(z)|^2\, dz,
  \]
  as $h$ tends to $0$ and $X\leq X_0\,=\,2\sqrt{b}\delta_0$.
  There exists $\alpha,\,\beta $ such that $\phi=\alpha \phi_c\,+\,\beta \phi_s$.
  By construction, on $[\eps,X_0/h]$, we have
  \[
    \begin{split}
      \| \phi_c -\psi_c \|_{C^0}&\leq\, Mh\, |\ln h|\, \|\psi_c\|_{C^0},\\
      \| \phi_s-\psi_s\|_{C^0}&\leq\, Mh\, |\ln h|\, \|\psi_s\|_{C^0}.
    \end{split}
  \]
  and, for $z$ going to infinity, we have
  \[
    \begin{split}
      \psi_c(z)&= \cos z \,+\ O(z^{-1}),\\
      \psi_s(z)&= \sin z \,+\, O(z^{-1}).
    \end{split}
  \]
  Finally, we obtain:
  \[
    \begin{split}
      \int_{\eps}^{X/h} |\phi(z)|^2\, dz & \,=\,
      \int_{\eps}^{X/h} (\alpha\cos z \,+\,\beta \sin z)^2\,dz \\
      & + (|\alpha|^2+|\beta|^2)\, O(\ln|X/h|)\,\\
      & + (|\alpha|^2+|\beta|^2)\, |X/h|\, O(h^2\ln|h|^2).
    \end{split}
  \]
  A direct computation gives that
  \[
    \int_{\eps}^{X/h} (\alpha\cos z \,+\,\beta \sin z)^2\,dz\,=\,\frac{\alpha^2+\beta^2}{2}\frac{X}{h}\,+\,|ab|\, O(1).
  \]
  It follows that there exist a constant $C,$ and $h_0$ so that, for any $X$, $\alpha$, $\beta$, and $h$ small enough we have
  \[
    C^{-1}\cdot \frac{X(\alpha^2\,+\,\beta^2)}{h}\,
    \leq\,\int_{\eps}^{X/h} |\phi(z)|^2\, dz
    \,\leq\, C\cdot \frac{X(\alpha^2\,+\,\beta^2)}{h}.
  \]
  The claim now follows.
\end{proof}

All of these estimates may be combined to give 

\begin{prop}
\label{prop:boundonv}
  For any $\eps$ and $\delta_0$, there exists a constant $C$ such that for each $\delta < \delta_0$
  there exists $h_0$ such that if  $h \leq h_0$, then
\begin{equation}
\label{eqn:middle-regime}
    \int_{\eps h}^{\delta} |v_h(x)|^2 \,L(x)\,dx\,
    \leq\,
    C\,
    \left(\|u\|_{L^2(\Omega_{\delta})}^2\,
    +\,
    \left(h\,+\, \delta \,|\ln \delta|\right)\cdot\|u\|^2_{L^2(\Omega)}\right). 
\end{equation}
\end{prop}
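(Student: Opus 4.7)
The strategy is to decompose $w_h := L^{\und} v_h$ as $w_h = w_p + \phi_h$, where $w_p$ is the particular solution to the Sturm--Liouville equation (\ref{eq:eqinhom}) already constructed in the text, and $\phi_h$ is the resulting solution of the associated homogeneous equation. The particular solution has been controlled in (\ref{eq:boundonwp}); applying exactly the same construction on the slightly larger interval $[\eps h, 2\delta]$ (which is permissible as long as we replace $\delta_0$ by $\delta_0/2$ at the outset) yields
\[
\|w_p\|_{L^2([\eps h, 2\delta])} \leq C(h + \delta |\ln \delta|)^{\und} \|u_h\|_{L^2(\Omega)}.
\]
So the main task is to estimate $\phi_h$ on $[\eps h, \delta]$.

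For this, the key tool is Lemma \ref{lem:overlap}, which transfers the $L^2$ mass of a homogeneous solution from $[\eps h, \delta]$ to the overlap interval $[\delta, 2\delta]$. On that interval $\phi_h = w_h - w_p$, so
\[
\|\phi_h\|_{L^2([\delta, 2\delta])}^2 \leq 2 \|w_h\|_{L^2([\delta, 2\delta])}^2 + 2 \|w_p\|_{L^2([\delta, 2\delta])}^2.
\]
For the $w_h$ term, Cauchy--Schwarz applied to the definition $v_h(x) = L(x)^{-1}\int_0^{L(x)} u_h(x,y)\, dy$ gives
\[
\|w_h\|_{L^2([\delta,2\delta])}^2 = \int_\delta^{2\delta} |v_h(x)|^2\, L(x)\, dx \leq \int_\delta^{2\delta}\!\!\int_0^{L(x)} |u_h(x,y)|^2\, dy\, dx \leq \|u_h\|_{L^2(\Omega_\delta)}^2,
\]
since in the shifted coordinates used in this subsection the strip $\{(x,y) : \delta \leq x \leq 2\delta,~0 < y < L(x)\}$ lies inside $\Omega_\delta$.

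Combining these ingredients, I obtain
\[
\int_{\eps h}^\delta |v_h(x)|^2\, L(x)\, dx = \|w_h\|_{L^2([\eps h, \delta])}^2 \leq C\left( \|u_h\|_{L^2(\Omega_\delta)}^2 + (h + \delta |\ln \delta|) \|u_h\|_{L^2(\Omega)}^2 \right),
\]
which is the estimate claimed. I do not foresee any real obstacle: all of the hard analytic content --- the construction of the basis $\{\phi_c,\phi_s\}$, the variation-of-constants formula defining $w_p$ together with its bound (\ref{eq:boundonwp}), and the overlap estimate of Lemma \ref{lem:overlap} --- is already in place. The only care required is bookkeeping: fixing $\delta_0$ small enough that $2\delta$ still lies in the regime where (\ref{eq:boundonwp}) and the construction of $\phi_c, \phi_s$ are valid, and tracking the dependence of $C$ only on $\eps$, $\delta_0$, and the compact range $[a,b]$ in which $E_h$ is confined.
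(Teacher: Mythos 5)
Your proof is correct and follows essentially the same approach as the paper: decompose $w_h = w_p + \phi_h$, bound $w_p$ by (\ref{eq:boundonwp}), transfer the mass of $\phi_h$ from $[\eps h,\delta]$ to $[\delta,2\delta]$ via Lemma \ref{lem:overlap}, and then control $w_h$ on $[\delta,2\delta]$ by $\|u_h\|_{L^2(\Omega_\delta)}$ using Cauchy--Schwarz on the zeroth mode. Your explicit note about extending the $w_p$ bound to $[\eps h, 2\delta]$ (by shrinking $\delta_0$) is slightly more careful bookkeeping than the paper's, but the argument is the same.
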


\begin{proof}
  By definition of $w_h$, we have
  \[
    \int_{\eps h}^{\delta} |v_h(x)|^2 \,L(x)\,dx\,
    =\,
    \int_{\eps h}^{\delta}|w_h(x)|^2 \,dx.
  \]
  Since $w_h=w_p+\phi$, we obtain
  \begin{equation*}
  \label{eqn:w_h}
    \int_{\eps h}^{\delta}\,|w_h(x)|^2 \,dx\,
    \leq\, 
    2\left( \int_{\eps h}^{\delta}|\phi_h(x)|^2 \,dx\,
      +\,
      \int_{\eps h}^{\delta}|w_p(x)|^2 \,dx\right).
  \end{equation*}
  The second integral may be estimated using Lemma \ref{lem:overlap}:
  \[
   \int_{\eps h}^{\delta} |\phi_h(x)|^2 \,dx~
   \leq~
    \int_{\delta}^{2\delta} |\phi_h(x)|^2 \,dx\ \,
    \leq\,
    2\left(
      \int_\delta^{2\delta} |w_h(x)|^2 \,dx\ \,+\,\int_\delta^{2\delta} |w_p(x)|^2 \,dx
      \right ).
  \]
  Estimate (\ref{eq:boundonwp}) can be used to estimate each of the integrals involving $w_p$, and 
  so we obtain:
  \[
    \int_{\eps h}^{\delta}|w_h(x)|^2 \,dx\,
    \leq\, 
    C\left(\int_{\delta}^{2\delta} |w_h(x)|^2 \,dx
      \,+\,
      (h\,+\,\delta\ln \delta)\|u\|^2_{L^2(\Omega)}\right).
  \]
  The claim follows since
  \[
    \int_\delta^{2\delta}|w_h(x)|^2 \,dx~
    \leq~
    \int_\delta^{2\delta} |v(x)|^2 \,L(x)dx~
    \leq~
    \|u_h\|^2_{L^2(\Omega_{\delta})}.
  \]
\end{proof}


\subsubsection{Obtaining the contradiction}\hfill \\

We now finish the proof of Theorem \ref{thm:limits-Neumann}. 
Recall that we are assuming for contradiction that the statement of the 
theorem is false, and hence 
Proposition \ref{prop:coninwings} provides us with a sequence $(u_h)_{h \in \Hbb}$
so that that given $\eps,\delta>0$, 
there exists $h_0$ such that if $h \in  \op 0,h_0\cl \cap \Hbb$, then
\begin{equation}
\label{eqn:no-threshold2}
    \|u_h\|_{L^2(\Omega_\delta)}\,\leq \, \eps\, \|u_h\|_{L^2(\Omega)}.
\end{equation}

Fix $\eta>0$. By Proposition \ref{prop:coninwings}, Lemma \ref{lem:u-perp}, Corollary \ref{coro:H1-bound}, there exists $\eps, \delta_0, h_0>0$ so that if $\delta \leq \delta_0$ 
there exists $h_0$ so that if  $h \in \op 0, h_0 \cl \cap \Hbb$, then 
\begin{equation}
\label{est:central}
\|u_h\|_{L^2(\Omega_\delta)}\,\leq \, \eta\, \|u_h\|_{L^2(\Omega)},
\end{equation}
\[
  \| u^\bot_{h}\|_{L^2(U_{\delta})}^2 ~
  \leq~
  \eta\,
  \|u_h\|^2_{L^2(\Omega)}, 
\]
and 
\[
  \int_{0}^{\eps h} |v_h(x)|^2 \,L(x)\,dx~
  \leq~  
  \eta\,
  \|u_h\|^2_{L^2(\Omega)}.
\]
With these choices of $\eps$ and $\delta_0$, 
we apply  Proposition \ref{prop:boundonv} to obtain a 
constant $C$ for which (\ref{eqn:middle-regime}) holds if $h$ is 
sufficiently small. 
Choose $\delta< \delta_0$ so that $C\delta\,|\ln \delta| \leq \eta$.
With this choice of $\delta$ we have 
\[
  \begin{split}
    \| u_h\|^2_{L^2(U_{\delta})}\,
    &=\, 
    \| u_h^{\bot}\|^2_{L^2(U_{\delta})} \,+\,\int_0^{\delta} |v_h(x)|^2 \,L(x)dx\\
    & 
    \leq\,
    \eta \, \|u_h\|^2_{L^2(\Omega)} \,+\,\eta\, \|u_h\|^2_{L^2(\Omega)}\,+\,\int_{\eps h}^{\delta} |v_h(x)|^{2} \,L(x)dx \\
    &\leq\,
    2 \eta\,
    \|u_h\|^2_{L^2(\Omega)}\,
    +\,C\,
    \|u_h\|^2_{L^2(\Omega_{\delta})}\,+\,(C\cdot h \,+\,\eta)\|u_h\|^2_{L^2(\Omega)}\\
    & \leq\,
    3 \eta\,
    \| u_h\|^2_{L^2(\Omega)}\,
    +\,
    C\, \|u_h\|^2_{L^2(\Omega_{\delta})}\,
    +\,
    C\cdot h\, \| u_h\|^2_{L^2(\Omega)}.
  \end{split}
\]
The last term is less than $\eta\, \| u_h\|^2_{L^2(\Omega)}$ if $h < \eta/C$. 
To bound the second term, we apply Proposition \ref{prop:controlestimate}:
There exists $C'$ so that if $h$ is sufficiently small, then
\[
  \|u_h\|^2_{L^2(\Omega_{\delta})}~
  \leq~
  \frac{\eta}{C}\, 
  \|u\|_{L^2(\Omega)}^2\,+\,C'\, \|h\, \px u\|^2.
\]
Using these estimates, we find that for 
$h$ small enough we have
\[
  \| u_h\|^2_{L^2(U_{\delta})}~
  \leq~
  5 \eta\, \|u_h\|^2_{L^2(\Omega)}\,+\, C\, C'\, \| h \, \px u\|^2.
\]
Therefore, the integrability condition (\ref{eqn:integrability2})
implies that for $h$ small enough
\[
   \| u_h\|^2_{L^2(U_{\delta})}~
   \leq~
   6 \eta\, \|u_h\|^2_{L^2(\Omega)}.
\]
Recall that $U_{\delta}$ is the component of $\Omega \setminus \Omega_{\delta}$
on the left of the central region $\Omega_{\delta}$. 
The same argument applies to the component of $\Omega \setminus \Omega_{\delta}$
on the right of the central region. Thus, in sum we have
\[
 \| u_h\|^2_{L^2(\Omega\setminus \Omega_{\delta})}~
 \leq~
 12 \eta\, \|u_h\|^2_{L^2(\Omega)}.
\]
By combining this with (\ref{est:central}) we find that 
$$
\| u_h\|^2_{L^2(\Omega)} \leq 13 \eta\, \| u_h\|^2_{L^2(\Omega)}
$$
which is absurd if $\eta < 1/13$.


\end{document}